\newcommand{\lyxaddress}[1]{
\par {\raggedright #1
\vspace{1.4em}
\noindent\par}
}
\newcommand{\mathsym}[1]{{}}
\newcommand{\sM}{\mathscr{M}}
\newcommand{\sD}{\mathscr{D}}
\newcommand{\sN}{\mathscr{N}}
\newcommand{\sS}{\mathscr{S}}
\newcommand{\sH}{\mathscr{H}}
\newcommand{\sB}{\mathscr{B}}
\newtheorem{theorem}{Theorem}
\newtheorem{proposition}[theorem]{Proposition}
\newtheorem{lemma}[theorem]{Lemma}
\theoremstyle{remark}
\newtheorem{remark}[theorem]{Remark}
\newtheorem*{question*}{QUESTION}
\newtheorem*{problem*}{Problem}
\newtheorem*{remark*}{Remark}
\newtheorem*{definition*}{Definition}
\newtheorem*{notation*}{Notation}
\newcommand{\Dom}{\mathop\mathrm{Dom}\nolimits}
\newcommand{\Ran}{\mathop\mathrm{Ran}\nolimits}
\newcommand{\Ker}{\mathop\mathrm{Ker}\nolimits}
\newcommand{\spec}{\mathop\mathrm{spec}\nolimits}
\renewcommand{\Re}{\mathop\mathrm{Re}\nolimits}
\renewcommand{\Im}{\mathop\mathrm{Im}\nolimits}
\begin{document}

\title{The Hahn-Exton $q$-Bessel function as the characteristic function
of a Jacobi matrix}

\author{F.~\v{S}tampach$^{1}$, P.~\v{S}\v{t}ov\'\i\v{c}ek$^{2}$}

\date{{}}

\maketitle

\lyxaddress{$^{1}$Department of Applied Mathematics, Faculty of Information
Technology, Czech Technical University in~Prague, Kolejn\'\i~2,
160~00 Praha, Czech Republic}

\lyxaddress{$^{2}$Department of Mathematics, Faculty of Nuclear Science, Czech
Technical University in Prague, Trojanova 13, 12000 Praha, Czech Republic}
\begin{abstract}
\noindent A family $\mathcal{T}^{(\nu)}$, $\nu\in\mathbb{R}$, of
semiinfinite positive Jacobi matrices is introduced with matrix entries
taken from the Hahn-Exton $q$-difference equation. The corresponding
matrix operators defined on the linear hull of the canonical basis
in $\ell^{2}(\mathbb{Z}_{+})$ are essentially self-adjoint for $|\nu|\geq1$
and have deficiency indices $(1,1)$ for $|\nu|<1$. A convenient
description of all self-adjoint extensions is obtained and the spectral
problem is analyzed in detail. The spectrum is discrete and the characteristic
equation on eigenvalues is derived explicitly in all cases. Particularly,
the Hahn-Exton $q$-Bessel function $J_{\nu}(z;q)$ serves as the
characteristic function of the Friedrichs extension. As a direct application
one can reproduce, in an alternative way, some basic results about
the $q$-Bessel function due to Koelink and Swarttouw.
\end{abstract}
\vskip\baselineskip\noindent\emph{Keywords}: Jacobi matrix, Hahn-Exton
$q$-Bessel function, self-adjoint extension, spectral problem

\vskip0.5\baselineskip\noindent\emph{2010 Mathematical Subject
Classification}: 47B36, 33D45, 47A10, 39A70

\section{Introduction}

There exist three commonly used $q$-analogues of the Bessel function
$J_{\nu}(z)$. Two of them were introduced by Jackson in the beginning
of the 20th century and are mutually closely related, see \cite{GasperRahman}
for a basic overview and original references. Here we shall be concerned
with the third analogue usually named after Hahn and Exton. Its most
important features like properties of the zeros and the associated
Lommel polynomials including orthogonality relations were studied
not so long ago \cite{KoelinkSwarttouw,KoelinkVanAssche,Koelink}.
The Hahn-Exton $q$-Bessel function is defined as follows
\begin{equation}
J_{\nu}(z;q)\equiv J_{\nu}^{(3)}(z;q)=\frac{(q^{\nu+1};q){}_{\infty}}{(q;q)_{\infty}}\, z^{\nu}\,_{1}\phi_{1}(0;q^{\nu+1};q,qz^{2}).\label{eq:def_q-Bessel}
\end{equation}
Here $\,_{r}\phi_{s}(a_{1},\ldots,a_{r};b_{1},\ldots,b_{s};q,z)$
stands for the basic hypergeometric series (see, for instance, \cite{GasperRahman}).
It is of importance that $J_{\nu}(z;q)$ obeys the Hahn-Exton $q$-Bessel
difference equation
\begin{equation}
J_{\nu}(qz;q)+q^{-\nu/2}(qz^{2}-1-q^{\nu})J_{\nu}(q^{1/2}z;q)+J_{\nu}(z;q)=0.\label{eq:Hahn-Exton_diffeq}
\end{equation}

Using the coefficients from (\ref{eq:Hahn-Exton_diffeq}) one can
introduce a two-parameter family of real symmetric Jacobi matrices

\begin{equation}
\mathcal{T}\equiv\mathcal{T}^{(\nu)}=\begin{pmatrix}\beta_{0} & \alpha_{0}\\
\alpha_{0} & \beta_{1} & \alpha_{1}\\
 & \alpha_{1} & \beta_{2} & \alpha_{2}\\
 &  & \ddots & \ddots & \ddots
\end{pmatrix}\label{eq:JacobiT}
\end{equation}
depending on $\nu\in\mathbb{R}$ and also on $q$, $0<q<1$. But $q$
is treated below as having been fixed and is not indicated explicitly
in most cases. Matrix entries are supposed to be indexed by $m,n=0,1,2,\ldots$.
More formally, we put $\mathcal{T}_{n,n}=\beta_{n}$, $\mathcal{T}_{n,n+1}=\mathcal{T}_{n+1,n}=\alpha_{n}$
and $\mathcal{T}_{m,n}=0$ otherwise, where
\begin{equation}
\alpha_{n}\equiv\alpha_{n}^{(\nu)}=-q^{-n+(\nu-1)/2},\text{ }\beta_{n}\equiv\beta_{n}^{(\nu)}=(1+q^{\nu})\, q^{-n},\text{ }n\in\mathbb{Z}_{+}.\label{eq:alpha_beta}
\end{equation}
In order to keep notations simple we will also suppress the superscript
$(\nu)$ provided this cannot lead to misunderstanding.

Our main goal in this paper is to provide a detailed analysis of those
operators $T$ in $\ell^{2}\equiv\ell^{2}(\mathbb{Z}_{+})$ (with
$\mathbb{Z}_{+}$ standing for nonnegative integers) whose matrix
in the canonical basis equals $\mathcal{T}$. This example has that
interesting feature that it exhibits a transition between the indeterminate
and determinate cases depending on $\nu$. In more detail, denote
by $\mathbb{C}^{\infty}$ the linear space of all complex sequences
indexed by $\mathbb{Z}_{+}$ and by $\sD$ the subspace of those sequences
having at most finitely many nonvanishing entries. One may also say
that $\sD$ is the linear hull of the canonical basis in $\ell^{2}$.
It turns out that the matrix operator induced by $\mathcal{T}$ on
the domain $\sD$ is essentially self-adjoint in $\ell^{2}$ if and
only if $|\nu|\geq1$. For $|\nu|<1$ there exists a one-parameter
family of self-adjoint extensions.

Another interesting point is a close relationship between the spectral
data for these operators $T$ and the Hahn-Exton $q$-Bessel function.
It turns out that, for an appropriate (Friedrichs) self-adjoint extension,
$J_{\nu}(q^{-1/2}\sqrt{x};q)$ serves as the characteristic function
of $T$ in the sense that its zero set on $\mathbb{R}_{+}$ exactly
coincides with the spectrum of $T$. There also exists an explicit
formula for corresponding eigenvectors. Moreover, $T^{-1}$ can be
shown to be compact. This makes it possible to reproduce, in a quite
straightforward but alternative way, some results originally derived
in \cite{KoelinkSwarttouw,Koelink}.

Finally we remark that recently we have constructed, in \cite{StampachStovicek13a,StampachStovicek13b},
a number of examples of Jacobi operators with discrete spectra and
characteristic functions explicitly expressed in terms of special
functions, a good deal of them comprising various combinations of
$q$-Bessel functions. That construction confines, however, only to
a class of Jacobi matrices characterized by a convergence condition
imposed on the matrix entries. For this condition is readily seen
to be violated in the case of $\mathcal{T}$, as defined in (\ref{eq:JacobiT})
and (\ref{eq:alpha_beta}), in the present paper we have to undertake
another approach whose essential part is a careful asymptotic analysis
of formal eigenvectors of $\mathcal{T}$.

\section{Self-adjoint operators induced by $\mathcal{T}$}

\subsection{A $\ast$-algebra of semiinfinite matrices \label{subsec:Mfin}}

Denote by $\sM_{\text{fin}}$ the set of all semiinfinite matrices
indexed by $\mathbb{Z}_{+}\times\mathbb{Z}_{+}$ such that each row
and column of a matrix has only finitely many nonzero entries. For
instance, $\sM_{\text{fin}}$ comprises all band matrices and so all
finite-order difference operators. Notice that $\sM_{\text{fin}}$
is naturally endowed with the structure of a $\ast$-algebra, matrices
from $\sM_{\text{fin}}$ act linearly on $\mathbb{C}^{\infty}$ and
$\sD$ is $\sM_{\text{fin}}$-invariant.

Choose $\mathcal{A}\in\sM_{\text{fin}}$ and let $\mathcal{A}^{\text{H}}$
stand for its Hermitian adjoint. Let us introduce, in a fully standard
manner, operators $\dot{A}$, $A_{\text{min}}$ and $A_{\text{max}}$
on $\mathbb{\ell}^{2}$, all of them being restrictions of $\mathcal{A}$
to appropriate domains. Namely, $\dot{A}$ is the restriction $\mathcal{A}\big|_{{\displaystyle \sD}}$,
$A_{\text{min}}$ is the closure of $\dot{A}$ and
\[
\Dom A_{\text{max}}=\{f\in\ell^{2};\,\mathcal{A}f\in\ell^{2}\}.
\]
Clearly, $\dot{A}\subset A_{\text{max}}$. Straightforward arguments
based just on systematic application of definitions show that
\[
(\dot{A})^{\ast}=(A_{\text{min}})^{\ast}=A_{\text{max}}^{\text{H}},\ (A_{\text{max}})^{\ast}=A_{\text{min}}^{\text{H}}.
\]
Hence $A_{\text{max}}$ is closed and $A_{\text{min}}\subset A_{\text{max}}$.

\begin{lemma} \label{prop:Amin_Amax} Suppose $p,w\in\mathbb{C}$
and let $\mathcal{A}\in\sM_{\text{fin}}$ be defined by
\begin{equation}
\mathcal{A}_{n,n}=p^{n},\ \mathcal{A}_{n+1,n}=-wp^{n+1}\ \ \text{for all}\ n\in\mathbb{Z}_{+},\ \mathcal{A}_{m,n}=0\ \text{otherwise}.\label{eq:Acal}
\end{equation}
Then $A_{\text{min}}\neq A_{\text{max}}$ if and only if $1/|p|<|w|<1$,
and in that case
\[
\Dom A_{\text{min}}=\{f\in\Dom A_{\text{max}}\,;\,\lim_{n\to\infty}w^{-n}f_{n}=0\}.
\]
\end{lemma}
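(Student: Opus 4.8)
The plan is to split according to whether $|wp|:=|w|\,|p|$ is $\le 1$ or $>1$, treating the former as a \emph{determinate} regime where $A_{\text{min}}=A_{\text{max}}$, and the latter through a single linear functional that detects the deficiency. First I would dispose of the trivial cases $p=0$ (then $\mathcal{A}$ is the bounded rank-one matrix $f\mapsto f_{0}e_{0}$, so $A_{\text{min}}=A_{\text{max}}$) and $w=0$ (then $\mathcal{A}$ is diagonal, so again $A_{\text{min}}=A_{\text{max}}$); in both $1/|p|<|w|$ fails, so from now on $p\neq0$ and $w\neq0$. Here $(e_{n})_{n\in\mathbb{Z}_{+}}$ denotes the canonical basis. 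With $p\neq0$ the matrix $\mathcal{A}$ is lower bidiagonal with nonvanishing diagonal, hence formally invertible on $\mathbb{C}^{\infty}$: $\mathcal{A}f=g$ has the unique solution $f_{n}=\sum_{k=0}^{n}w^{n-k}p^{-k}g_{k}$, equivalently $h_{n}:=p^{n}f_{n}$ satisfies $h_{0}=g_{0}$ and $h_{n}=wp\,h_{n-1}+g_{n}$ for $n\ge1$, equivalently
\[
w^{-n}f_{n}=\sum_{k=0}^{n}(wp)^{-k}g_{k},\qquad n\in\mathbb{Z}_{+}.
\]
I will also use repeatedly that $\Dom A_{\text{min}}$ is the closure of $\sD$ in the graph norm $(\|u\|^{2}+\|\mathcal{A}u\|^{2})^{1/2}$.

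Consider first $|wp|\le1$; I claim $\Dom A_{\text{min}}=\Dom A_{\text{max}}$. Given $f\in\Dom A_{\text{max}}$ with $g:=\mathcal{A}f\in\ell^{2}$, approximate $f$ by the soft cut-offs $f^{(N)}=c^{(N)}f$ (entrywise product), where $c^{(N)}_{n}=1$ for $n\le N$, $c^{(N)}_{n}=0$ for $n\ge2N$, and $c^{(N)}$ decreases linearly on $[N,2N]$; then $f^{(N)}\in\sD$ and $f^{(N)}\to f$ in $\ell^{2}$. A short computation gives
\[
(\mathcal{A}f^{(N)}-g)_{n}=(c^{(N)}_{n}-1)g_{n}+wp^{n}(c^{(N)}_{n}-c^{(N)}_{n-1})f_{n-1}.
\]
The first term tends to $0$ in $\ell^{2}$ because $g\in\ell^{2}$, and the $\ell^{2}$-norm squared of the second equals $|wp|^{2}N^{-2}\sum_{N\le m<2N}|h_{m}|^{2}$. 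Splitting $h_{m}=\sum_{k=0}^{m}(wp)^{m-k}g_{k}$ at an arbitrary index and using $|wp|\le1$ together with $g\in\ell^{2}$ and the Cauchy--Schwarz inequality yields $|h_{m}|=o(\sqrt{m})$ as $m\to\infty$, which drives that quantity to $0$. Hence $\mathcal{A}f^{(N)}\to g$ and $f\in\Dom A_{\text{min}}$.

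Now let $|wp|>1$, i.e.\ $1/|p|<|w|$. Then $\sum_{k\ge0}(wp)^{-k}g_{k}$ converges absolutely for every $g\in\ell^{2}$, so by the displayed identity the limit
\[
\Lambda(f):=\lim_{n\to\infty}w^{-n}f_{n}=\sum_{k=0}^{\infty}(wp)^{-k}(\mathcal{A}f)_{k}
\]
exists for all $f\in\Dom A_{\text{max}}$; the estimate $|\Lambda(f)|\le(1-|wp|^{-2})^{-1/2}\|\mathcal{A}f\|$ shows $\Lambda$ is continuous in the graph norm, and $\Lambda$ plainly vanishes on $\sD$, hence on $\Dom A_{\text{min}}$, so $\Dom A_{\text{min}}\subseteq\Ker\Lambda$. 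For the reverse inclusion take $f\in\Dom A_{\text{max}}$ with $\Lambda(f)=0$ and use the plain truncations $f^{(N)}=(f_{0},\dots,f_{N},0,0,\dots)\in\sD$: then $\mathcal{A}f^{(N)}$ agrees with $g:=\mathcal{A}f$ in positions $0,\dots,N$, equals $-wp^{N+1}f_{N}$ in position $N+1$, and vanishes afterwards; since $\Lambda(f)=0$ one has $f_{N}=-w^{N}\sum_{k>N}(wp)^{-k}g_{k}$, so $-wp^{N+1}f_{N}=-(wp)^{N+1}\sum_{k>N}(wp)^{-k}g_{k}\to0$ by Cauchy--Schwarz. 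Hence $\mathcal{A}f^{(N)}\to g$, so $f\in\Dom A_{\text{min}}$ and $\Dom A_{\text{min}}=\Ker\Lambda=\{f\in\Dom A_{\text{max}}:\lim_{n}w^{-n}f_{n}=0\}$. Finally I would decide when $\Ker\Lambda$ is proper: if $|w|\ge1$ then $f\in\ell^{2}$ forces $w^{-n}f_{n}\to0$ (otherwise $|f_{n}|=|w|^{n}|w^{-n}f_{n}|$ would not tend to $0$), so $\Lambda\equiv0$ and $A_{\text{min}}=A_{\text{max}}$; if $|w|<1$, i.e.\ $1/|p|<|w|<1$, then $f^{*}:=(w^{n})_{n\in\mathbb{Z}_{+}}$ lies in $\ell^{2}$, satisfies $\mathcal{A}f^{*}=e_{0}$ and $\Lambda(f^{*})=1$, so $\Ker\Lambda$ is a proper (closed) subspace and $A_{\text{min}}\neq A_{\text{max}}$, with domain as stated. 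Combining this with the case $|wp|\le1$ (which also handles all $|w|\ge1$ with $|wp|\le1$) gives $A_{\text{min}}\neq A_{\text{max}}$ exactly when $1/|p|<|w|<1$.

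The main obstacle, I expect, will be the two approximation estimates. In the case $|wp|\le1$ the delicate point is the borderline $|wp|=1$, where the recurrence for $h_{n}$ has a non-summable kernel and the bound $|h_{m}|=o(\sqrt{m})$ has to be extracted from $g\in\ell^{2}$ alone. In the case $|wp|>1$ the subtlety is that $p^{N+1}f_{N}$ looks as though it could grow like $|wp|^{N}$, whereas the vanishing of $\Lambda(f)$ rewrites it as an $\ell^{2}$-tail and therefore makes the boundary contribution small; verifying this cancellation cleanly is the heart of the matter.
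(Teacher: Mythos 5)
Your proof is correct, but it takes a genuinely different route from the paper's. The paper never builds approximating sequences: it invokes $(\dot A)^{\ast}=A_{\max}^{\text{H}}$ and characterizes $f\in\Dom A_{\min}$ by the vanishing of the boundary pairing $\lim_{n}\mathcal{A}_{n,n}\overline{g_{n}}f_{n}$ for every $g\in\Dom A_{\max}^{\text{H}}$, then analyses the solutions of $\mathcal{A}^{\text{H}}g=h$ (cases $|p|\leq1$; $|w|\geq1$; $|w|<1$, $|pw|\leq1$; $|w|<1$, $|pw|>1$), the decisive object being the $\ell^{2}$ kernel element $\tilde g_{n}=(\overline{p}\,\overline{w})^{-n}$ of $\mathcal{A}^{\text{H}}$. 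You instead argue straight from the definition of the closure: for $|wp|\leq1$ you produce explicit soft cut-offs converging in graph norm, the only delicate point being the averaged bound $|h_{m}|=o(\sqrt{m})$ at the borderline $|wp|=1$, and your sketch (split the convolution sum at a fixed index, Cauchy--Schwarz on the tail) does yield it; for $|wp|>1$ you introduce the graph-norm-continuous functional $\Lambda(f)=\lim_{n}w^{-n}f_{n}=\sum_{k}(wp)^{-k}(\mathcal{A}f)_{k}$, prove $\Dom A_{\min}=\Ker\Lambda$ by plain truncation (the boundary term at position $N+1$ is controlled exactly because $\Lambda(f)=0$ rewrites $p^{N+1}f_{N}$ as an $\ell^{2}$-tail; note only the harmless sign slip $-wp^{N+1}f_{N}=+(wp)^{N+1}\sum_{k>N}(wp)^{-k}g_{k}$), and then decide nontriviality of $\Lambda$ with the test vector $f^{*}=(w^{n})$, which is the same witness the paper uses. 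Since $\Lambda(f)=\langle\tilde g,\mathcal{A}f\rangle$, both proofs locate the same one-dimensional obstruction; what yours buys is self-containedness (no appeal to the adjoint identities or to the structure of $\Dom A_{\max}^{\text{H}}$) and a cleaner case split ($|wp|\leq1$ versus $|wp|>1$), at the price of the cut-off estimate at $|wp|=1$, which the paper's duality argument sidesteps.
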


\begin{proof} Choose arbitrary $f\in\Dom A_{\text{max}}$. Then $f\in\Dom A_{\text{min}}$
iff
\begin{equation}
\forall g\in\Dom A_{\text{max}}^{\text{H}},\ 0=\langle\mathcal{A}^{\text{H}}g,f\rangle-\langle g,\mathcal{A}f\rangle=-\lim_{n\to\infty}\mathcal{A}_{n,n}\,\overline{g_{n}}f_{n}.\label{eq:der_DomA_min}
\end{equation}
Since both $f$ and $g$ in (\ref{eq:der_DomA_min}) are supposed
to belong to $\ell^{2}$ this condition is obviously fulfilled if
$|p|\leq1$. Furthermore, the situation becomes fully transparent
for $w=0$. In that case the sequences $\{p^{n}g_{n}\}$ and $\{p^{n}f_{n}\}$
are square summable and (\ref{eq:der_DomA_min}) is always fulfilled.
In the remainder of the proof we assume that $|p|>1$ and $w\neq0$.

Consider first the case when $|w|\geq1$. Relation $\mathcal{A}f=h$
can readily be inverted even in $\mathbb{C}^{\infty}$ and one finds
that
\[
p^{n}f_{n}=\sum_{k=0}^{n}(pw)^{k}h_{n-k}=(pw)^{n}\sum_{k=0}^{n}(pw)^{-k}h_{k},\ \forall n.
\]
Denote temporarily by $\tilde{h}$ the sequence with $\tilde{h}_{n}=(\overline{p}\overline{w})^{-n}$.
It is square summable since, by our assumptions, $|pw|>1$. For $f\in\Dom A_{\text{max}}$
one has $h\in\ell^{2}$ and
\[
f_{n}=w^{n}(\langle\tilde{h},h\rangle-\zeta_{n})\,\ \text{where}\ \zeta_{n}=\sum_{k=n+1}^{\infty}(pw)^{-k}h_{k}.
\]
Assumption $f\in\ell^{2}$ clearly implies $\langle\tilde{h},h\rangle=0$
and then, by the Schwarz inequality,
\[
|\mathcal{A}_{n,n}f_{n}|\leq\|h\|/\sqrt{|pw|-1}\,,\ \forall n.
\]
Whence $\mathcal{A}_{n,n}\,\overline{g_{n}}f_{n}\to0$ as $n\to\infty$
for all $g\in\Dom A_{\text{max}}^{\text{H}}$ and so $f\in\Dom A_{\text{min}}$.

Suppose now that $|w|<1$. If $\mathcal{A}^{\text{H}}g=h$ in $\mathbb{C}^{\infty}$
and $h$ is bounded then, as an easy computation shows,
\begin{equation}
(\overline{p})^{n}\, g_{n}=\gamma(\overline{w})^{-n}+\sum_{k=0}^{\infty}(\overline{w})^{k}h_{n+k}\label{eq:aux_const_gamma}
\end{equation}
for all $n$ and some constant $\gamma$. Observe that, by the Schwarz
inequality,
\begin{equation}
\left|\sum_{k=0}^{\infty}(\overline{w})^{k}h_{n+k}\right|\leq\frac{1}{\sqrt{1-|w|^{2}}}\left(\sum_{k=n}^{\infty}|h_{k}|^{2}\right)^{\!1/2},\label{eq:Schwarz_wk_h}
\end{equation}
and this expression tends to zero as $n$ tends to infinity provided
$h\in\ell^{2}$.

In the case when $|pw|\leq1$ the property $g\in\ell^{2}$ and $\mathcal{A}^{\text{H}}g=h\in\ell^{2}$
implies that the constant $\gamma$ in (\ref{eq:aux_const_gamma})
is zero, and from (\ref{eq:Schwarz_wk_h}) one infers that $\mathcal{A}_{n,n}\,\overline{g_{n}}\to0$
as $n\to\infty$. Thus one finds condition (\ref{eq:der_DomA_min})
to be always fulfilled meaning that $f\in\Dom A_{\text{min}}$.

If $|pw|>1$ then the sequence $g$ defined in (\ref{eq:aux_const_gamma})
is square summable whatever $\gamma\in\mathbb{C}$ and $h\in\ell^{2}$
are. Condition (\ref{eq:der_DomA_min}) is automatically fulfilled,
however, for $\gamma=0$. Hence (\ref{eq:der_DomA_min}) can be reduced
to the single nontrivial case when we choose $\tilde{g}\in\Dom A_{\text{max}}^{\text{H}}$
with $\tilde{g}_{n}=(\overline{p}\overline{w})^{-n}$. Then $\mathcal{A}^{\text{H}}\tilde{g}=0$
and condition $\langle\tilde{g},\mathcal{A}f\rangle=0$ means that
$w^{-n}f_{n}\to0$ as $n\to\infty$. It remains to show that there
exists $f\in\Dom A_{\text{max}}$ not having this property. However
the sequence $\tilde{f}$, with $\tilde{f}_{n}=w^{n}$, does the job
since $\mathcal{A}\tilde{f}=(1,0,0,\ldots)\in\ell^{2}$. \end{proof}

\subsection{Associated orthogonal polynomials, self-adjoint extensions}

The tridiagonal matrix $\mathcal{T}$ defined in (\ref{eq:JacobiT}),
(\ref{eq:alpha_beta}) belongs to $\sM_{\text{fin}}$. With $\mathcal{T}$
there is associated a sequence of monic orthogonal polynomials \cite{Chihara},
called $\{P_{n}(x)\equiv P_{n}^{(\nu)}(x)\}$ and defined by the recurrence
\begin{equation}
P_{n}(x)=(x-\beta_{n-1})P_{n-1}(x)-\alpha_{n-2}^{\,\,2}\, P_{n-2}(x),\text{ }n\geq1,\label{eq:Pn_def}
\end{equation}
with $P_{-1}(x)=0$, $P_{0}(x)=1$. Put
\begin{equation}
\hat{P}_{n}(x)\equiv\hat{P}_{n}^{(\nu)}(x)=(-1)^{n}q^{n(n-\nu)/2}P_{n}(x).\label{eq:Phat_def}
\end{equation}
Then $(\hat{P}_{0}(x),\hat{P}_{1}(x),\hat{P}_{2}(x),\ldots)$ is a
formal eigenvector of $\mathcal{T}$ ($\equiv$~an eigenvector of
$\mathcal{T}$ in $\mathbb{C}^{\infty}$), i.e.
\begin{equation}
(\beta_{0}-x)\hat{P}_{0}(x)+\alpha_{0}\hat{P}_{1}(x)=0,\text{ }\alpha_{n-1}\hat{P}_{n-1}(x)+(\beta_{n}-x)\hat{P}_{n}(x)+\alpha_{n}\hat{P}_{n+1}(x)=0\text{ }\text{for}\text{ }n\geq1.\label{eq:recurr_orig}
\end{equation}

Observe that $\mathcal{T}^{(-\nu)}=q^{-\nu}\,\mathcal{T}^{(\nu)}$.
Since we are primarily interested in spectral properties of $\mathcal{T}^{(\nu)}$
in the Hilbert space $\ell^{2}$ we may restrict ourselves, without
loss of generality, to nonnegative values of the parameter $\nu$.
The value $\nu=0$ turns out to be somewhat special and will be discussed
separately later on, in Subsection~\ref{subsec:nu_eq_0}. Thus, if
not stated otherwise, we assume from now on that $\nu>0$.

Given $\mathcal{T}\in\sM_{\text{fin}}$ we again introduce the operators
$\dot{T}$, $T_{\text{min}}$, $T_{\text{max}}$ as explained in Subsection~\ref{subsec:Mfin}.
Notice that 
\[
\beta_{n}=q^{(\nu-1)/2}|\alpha_{n-1}|+q^{-(\nu-1)/2}|\alpha_{n}|.
\]
It follows at once that the operators $\dot{T}$ and consequently
$T_{\min}$ are positive. In fact, for any real sequence $\left\{ f_{n}\right\} \in\sD$
one has
\[
\sum_{m=0}^{\infty}\sum_{n=0}^{\infty}\mathcal{T}_{m,n}f_{m}f_{n}=|\alpha_{-1}|q^{(\nu-1)/2}\, f_{0}^{\,2}+\sum_{n=1}^{\infty}|\alpha_{n-1}|(q^{(\nu-1)/4}f_{n}-q^{-(\nu-1)/4}f_{n-1})^{2}\geq0.
\]
This is equivalent to the factorization $\mathcal{T=}\mathcal{A}^{\text{H}}\mathcal{A}$
where the matrix $\mathcal{A}\equiv\mathcal{A}^{(\nu)}\in\sM_{\text{fin}}$
is defined by the prescription: $\forall f\in\mathbb{C}^{\infty}$,
\[
(\mathcal{A}f)_{0}=|\alpha_{-1}|^{1/2}q^{(\nu-1)/4}\, f_{0},\ (\mathcal{A}f)_{n}=|\alpha_{n-1}|^{1/2}(q^{(\nu-1)/4}f_{n}-q^{-(\nu-1)/4}f_{n-1})\,\ \text{for}\ n\geq1.
\]
That is, $\forall n\geq0$,
\begin{equation}
\mathcal{A}_{n,n}=|\alpha_{n-1}|^{1/2}q^{(\nu-1)/4}=q^{-(n-\nu)/2},\ \mathcal{A}_{n+1,n}=-|\alpha_{n}|^{1/2}q^{-(\nu-1)/4}=-q^{-n/2}\,,\label{eq:A_from_decompT}
\end{equation}
and $\mathcal{A}_{m,n}=0$ otherwise.

Thus $\mathcal{T}$ induces a positive form on the domain $\sD$ with
values $\langle f,\mathcal{T}f\rangle=\|\mathcal{A}f\|^{2}$, $\forall f\in\sD$.
Let us call $\mathfrak{t}$ its closure. Then $\Dom\mathfrak{t}=\Dom A_{\text{min}}$
and $\mathfrak{t}(x)=\|A_{\text{min}}\, x\|^{2}$, $\forall x\in\Dom\mathfrak{t}$.
The positive operator $T^{\text{F}}$ associated with $\mathfrak{t}$
according to the representation theorem is the Friedrichs extension
of the closed positive operator $T_{\text{min}}$. One has
\[
T^{\text{F}}=A_{\text{min}}^{\,\ast}A_{\text{min}}.
\]
It is known that $T^{\text{F}}$ has the smallest form-domain among
all self-adjoint extensions of $T_{\text{min}}$ and also that this
is the only self-adjoint extension of $T_{\text{min}}$ with its domain
contained in $\Dom\mathfrak{t}$, see \cite[Chapter~VI]{Kato}.

One can apply Lemma~\ref{prop:Amin_Amax}, with $p=q^{-1/2}$ and
$w=q^{(1-\nu)/2}$, to obtain an explicit description of the form-domain
of $T^{\text{F}}$. Using still $\mathcal{A}$ defined in (\ref{eq:A_from_decompT})
one has $\Dom\mathfrak{t}=\{f\in\ell^{2};\,\mathcal{A}f\in\ell^{2}\}$
for $\nu\geq1$ and
\begin{equation}
\Dom\mathfrak{t}=\{f\in\ell^{2};\,\mathcal{A}f\in\ell^{2}\ \text{and}\ \lim_{n\to\infty}q^{(\nu-1)n/2}f_{n}=0\}\label{eq:Dom_t}
\end{equation}
for $0<\nu<1$.

In \cite{BrownChristiansen} one finds a clear explicit description
of the domain of the Friedrichs extension of a positive Jacobi matrix
which can be applied to our case. To this end, consider the homogeneous
three-term recurrence equation
\begin{equation}
\alpha_{n}Q_{n+1}+\beta_{n}Q_{n}+\alpha_{n-1}Q_{n-1}=0\label{eq:three-term_Z}
\end{equation}
on $\mathbb{Z}$. It simplifies to a recurrence equation with constant
coefficients,
\begin{equation}
q^{(\nu-1)/2}Q_{n+1}-(1+q^{\nu})Q_{n}+q^{(\nu+1)/2}Q_{n-1}=0.\label{eq:const_coeff}
\end{equation}
One can distinguish two independent solutions, $\{Q_{n}^{(1)}\}$
and $\{Q_{n}^{(2)}\}$, where
\begin{equation}
Q_{n}^{(1)}=\frac{q^{(1-\nu)n/2}-q^{\nu+(1+\nu)n/2}}{1-q^{\nu}}\,,\text{ }Q_{n}^{(2)}=q^{(1+\nu)n/2},\ n\in\mathbb{Z}.\label{eq:sols_Q1_Q2}
\end{equation}
Notice that $\{Q_{n}^{(1)}\}$ satisfies the initial conditions $Q_{-1}^{(1)}=0$,
$Q_{0}^{(1)}=1$, and so $Q_{n}^{(1)}=\hat{P}_{n}(0)$, $\forall n\geq0$.
On the other hand, $\{Q_{n}^{(2)}\}$ is always square summable over
$\mathbb{Z}_{+}$, and this is the so-called minimal solution at $+\infty$
since
\[
\lim_{n\to+\infty}\frac{Q_{n}^{(2)}}{Q_{n}}=0
\]
for every solution $\{Q_{n}\}$ of (\ref{eq:three-term_Z}) which
is linearly independent of $\{Q_{n}^{(2)}\}$. The Wronskian of $Q^{(1)}$
and $Q^{(2)}$ equals
\[
W_{n}(Q^{(1)},Q^{(2)})=1,\ \forall n\in\mathbb{Z},
\]
where $W_{n}(f,g):=\alpha_{n}(f_{n}g_{n+1}-g_{n}f_{n+1})$. Theorem~4
in \cite{BrownChristiansen} tells us that
\begin{equation}
\Dom T^{\text{F}}=\{f\in\ell^{2};\,\mathcal{T}f\in\ell^{2}\ \text{and}\ W_{\infty}(f,Q^{(2)})=0\}\label{eq:Dom_TF}
\end{equation}
where we put
\[
W_{\infty}(f,g)=\lim_{n\to\infty}W_{n}(f,g)
\]
for $f,g\in\mathbb{C}^{\infty}$ provided the limit exists. It is
useful to note, however, that discrete Green's formula implies existence
of the limit whenever $f,g\in\Dom T_{\text{max}}$, and then
\[
\langle T_{\text{max}}f,g\rangle-\langle f,T_{\text{max}}g\rangle=-W_{\infty}(\overline{f},g).
\]

We wish to determine all self-adjoint extensions of the closed positive
operator $T_{\text{min}}$. This is a standard general fact that the
deficiency indices of $T_{\text{min}}$ for any real symmetric Jacobi
matrix $\mathcal{T}$ of the form (\ref{eq:JacobiT}), with all $\alpha_{n}$'s
nonzero, are either $(0,0)$ or $(1,1)$. The latter case happens
if and only if for some $x\in\mathbb{C}$ all solutions of the second-order
difference equation
\begin{equation}
\alpha_{n}Q_{n+1}+(\beta_{n}-x)Q_{n}+\alpha_{n-1}Q_{n-1}=0\label{eq:three-term_Q_specpar_x}
\end{equation}
are square summable on $\mathbb{Z}_{+}$, and in that case this is
true for any value of the spectral parameter $x$ (see, for instance,
a detailed discussion in Section~2.6 of \cite{Teschl}).

Let us remark that a convenient description of the one-parameter family
of all self-adjoint extensions is also available if the deficiency
indices are $(1,1)$. Fix $x\in\mathbb{R}$ and any couple $Q^{(1)}$,
$Q^{(2)}$ of independent solutions of (\ref{eq:three-term_Q_specpar_x}).
Then all self-adjoint extensions of $T_{\text{min}}$ are operators
$\tilde{T}(\kappa)$ defined on the domains
\begin{equation}
\Dom\tilde{T}(\kappa)=\{f\in\ell^{2};\,\mathcal{T}f\in\ell^{2}\ \text{and}\ W_{\infty}(f,Q^{(1)})=\kappa\, W_{\infty}(f,Q^{(2)})\},\label{eq:Dom_Ttkappa}
\end{equation}
with $\kappa\in\mathbb{R}\cup\{\infty\}$. Moreover, all of them are
mutually different. Of course, $\tilde{T}(\kappa)f=\mathcal{T}f$,
$\forall f\in\Dom\tilde{T}(\kappa)$.

In our case we know, for $x=0$, a couple of solutions of (\ref{eq:three-term_Q_specpar_x})
explicitly, cf. (\ref{eq:sols_Q1_Q2}). From their form it becomes
obvious that $T_{\text{min}}=T_{\text{max}}$ is self-adjoint if and
only if $\nu\geq1$. With this choice of $Q^{(1)}$, $Q^{(2)}$ and
sticking to notation (\ref{eq:Dom_Ttkappa}), it is seen from (\ref{eq:Dom_TF})
that the Friedrichs extension $T^{\text{F}}$ coincides with $\tilde{T}(\infty)$.

\begin{lemma} \label{prop:DTmax_asympt} Suppose $0<\nu<1$. Then
every sequence $f\in\Dom T_{\text{max}}$ has the asymptotic expansion
\begin{equation}
f_{n}=C_{1}q^{(1-\nu)n/2}+C_{2}q^{(1+\nu)n/2}+o(q^{n})\,\ \text{as}\ n\to\infty,\label{eq:f_DTmax_asympt}
\end{equation}
where $C_{1},C_{2}\in\mathbb{C}$ are some constants. \end{lemma}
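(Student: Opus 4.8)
The plan is to realize $f\in\Dom T_{\max}$ as the solution of the inhomogeneous recurrence $\mathcal{T}f=h$ with $h\in\ell^{2}$, and then to extract the asymptotics by the discrete variation-of-constants formula built on the two explicit solutions $Q^{(1)},Q^{(2)}$ of the homogeneous equation \eqref{eq:three-term_Z} given in \eqref{eq:sols_Q1_Q2}. First I would fix $f\in\Dom T_{\max}$, put $h=\mathcal{T}f\in\ell^{2}$, and write, using the constancy of the Wronskian $W_{n}(Q^{(1)},Q^{(2)})=1$, the general solution of $\alpha_{n}f_{n+1}+\beta_{n}f_{n}+\alpha_{n-1}f_{n-1}=h_{n}$ in the form
\[
f_{n}=\Bigl(c_{1}-\sum_{k=N+1}^{n}Q_{k}^{(2)}h_{k}\Bigr)Q_{n}^{(1)}+\Bigl(c_{2}+\sum_{k=N+1}^{n}Q_{k}^{(1)}h_{k}\Bigr)Q_{n}^{(2)}
\]
for $n\ge N$, with constants $c_{1},c_{2}$ depending on $f$ and on a fixed base index $N$. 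Since $Q_{n}^{(2)}=q^{(1+\nu)n/2}$ decays geometrically and $Q_{n}^{(1)}$ grows like $q^{(1-\nu)n/2}$ up to a bounded correction, the coefficient sequences $\{Q_{k}^{(2)}h_{k}\}$ and $\{Q_{k}^{(1)}h_{k}\}$ need to be handled by Cauchy–Schwarz against the $\ell^{2}$ sequence $h$; this is where the argument has its one genuinely delicate point.

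Next I would show the first bracket converges: $\sum_{k}Q_{k}^{(2)}h_{k}$ is absolutely convergent since $\{Q_{k}^{(2)}\}\in\ell^{2}$ and $h\in\ell^{2}$, so the tail $\sum_{k=n+1}^{\infty}Q_{k}^{(2)}h_{k}$ is $o(1)$; setting $C_{1}:=c_{1}-\sum_{k=N+1}^{\infty}Q_{k}^{(2)}h_{k}$ turns the first term into $C_{1}Q_{n}^{(1)}+o(1)\cdot Q_{n}^{(1)}$. Because $Q_{n}^{(1)}=\dfrac{q^{(1-\nu)n/2}-q^{\nu+(1+\nu)n/2}}{1-q^{\nu}}=\dfrac{1}{1-q^{\nu}}q^{(1-\nu)n/2}+O(q^{(1+\nu)n/2})$, the product becomes a constant multiple of $q^{(1-\nu)n/2}$ plus a term that is $o(q^{(1-\nu)n/2})$, which is in turn $o(q^{n})$ only if $(1-\nu)/2\ge 1$, i.e. it is \emph{not} automatically $o(q^{n})$ — so the stated error term $o(q^{n})$ must be read as absorbed after combining with the $C_{1}$-part, meaning I should instead write the leading term exactly as $C_{1}q^{(1-\nu)n/2}$ (redefining $C_{1}$ to include the $1/(1-q^{\nu})$ factor) and carry the $O(q^{(1+\nu)n/2})$ correction into the next term. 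The second term $\bigl(c_{2}+\sum_{k=N+1}^{n}Q_{k}^{(1)}h_{k}\bigr)Q_{n}^{(2)}$ is controlled by noting $|Q_{k}^{(1)}|\le M q^{(1-\nu)k/2}$, so by Cauchy–Schwarz the partial sums satisfy $\bigl|\sum_{k=N+1}^{n}Q_{k}^{(1)}h_{k}\bigr|\le M\bigl(\sum_{k>N}q^{(1-\nu)k}\bigr)^{1/2}\|h\|$, a bounded quantity; multiplying by $Q_{n}^{(2)}=q^{(1+\nu)n/2}$ gives $O(q^{(1+\nu)n/2})$. Hence, after collecting, $f_{n}=C_{1}q^{(1-\nu)n/2}+C_{2}q^{(1+\nu)n/2}+R_{n}$ where $R_{n}=O(q^{(1+\nu)n/2})\cdot o(1)$ from the convergent-tail estimate combined with the $O(q^{(1+\nu)n/2})$ correction in $Q_{n}^{(1)}$.

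Finally I would tighten the remainder to the claimed $o(q^{n})$. The point is that $0<\nu<1$ forces $(1+\nu)/2<1$, so $q^{(1+\nu)n/2}$ alone is \emph{larger} than $q^{n}$; the gain must come from the fact that every contribution that is not a pure multiple of $q^{(1\pm\nu)n/2}$ carries an extra decaying factor. Concretely: in $\bigl(C_{1}+\sum_{k=n+1}^{\infty}Q_{k}^{(2)}h_{k}\bigr)\bigl(\text{correction }O(q^{(1+\nu)n/2})\bigr)$ the bracket is bounded, giving $O(q^{(1+\nu)n/2})$; to reach $o(q^{n})$ I use instead that $\sum_{k=n+1}^{\infty}Q_{k}^{(2)}h_{k}=o(q^{(1+\nu)n/2})$ (tail of an $\ell^{1}$ series whose terms are themselves $\le q^{(1+\nu)k/2}\sup|h|$, or more carefully Cauchy–Schwarz on the tail gives $o(1)$; combined with the structure one gets a product $o(q^{n})$), and similarly the second-term partial sum has a \emph{convergent} limit $c_{2}+\sum_{k=N+1}^{\infty}Q_{k}^{(1)}h_{k}=:C_{2}$, with the tail error $\sum_{k=n+1}^{\infty}Q_{k}^{(1)}h_{k}=o(q^{(1-\nu)n/2})$ by Cauchy–Schwarz on the tail, so multiplying by $q^{(1+\nu)n/2}$ yields $o(q^{n})$. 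Putting the two clean leading terms together with these $o(q^{n})$ remainders gives \eqref{eq:f_DTmax_asympt}. The main obstacle is bookkeeping the several error terms so that each is genuinely $o(q^{n})$ rather than merely $O(q^{(1\pm\nu)n/2})$; this forces using the tail (rather than full) estimates from Cauchy–Schwarz, exploiting that an $\ell^{2}$ tail against a geometric weight is $o$ of the weight evaluated at the cutoff, not just $O$.
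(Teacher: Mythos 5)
Your argument is correct, but it follows a genuinely different route from the paper. You solve the second-order inhomogeneous equation $\mathcal{T}f=h$, $h\in\ell^{2}$, by discrete variation of constants built on the explicit homogeneous solutions $Q^{(1)},Q^{(2)}$ from (\ref{eq:sols_Q1_Q2}) and the constant Wronskian $W_{n}(Q^{(1)},Q^{(2)})=1$, and then extract the asymptotics from Cauchy--Schwarz tail estimates: the tail $\sum_{k>n}Q^{(2)}_{k}h_{k}=o(q^{(1+\nu)n/2})$ paired with the $O(q^{(1-\nu)n/2})$ size of $Q^{(1)}_{n}$, and the tail $\sum_{k>n}Q^{(1)}_{k}h_{k}=o(q^{(1-\nu)n/2})$ paired with $Q^{(2)}_{n}=q^{(1+\nu)n/2}$, each giving $o(q^{n})$. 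The paper instead exploits the bidiagonal factorization $\mathcal{T}=q^{\nu}\mathcal{A}^{\mathrm{H}}\mathcal{A}$ and solves two \emph{first-order} recurrences explicitly (reusing the formula for $\mathcal{A}^{\mathrm{H}}g=h$ from the proof of Lemma~\ref{prop:Amin_Amax}, the free constant $\gamma$ there playing the role of your $c_{1},c_{2}$), then checks $\zeta_{n}\to0$ by the same kind of Schwarz estimate. Your route is more self-contained from the Jacobi-operator viewpoint and anticipates the Green-kernel computation the paper performs later in Proposition~\ref{prop:G_eq_Tinv}; the paper's route buys economy by recycling machinery already set up for the form $\mathfrak{t}$ and avoids having to justify the variation-of-constants representation on a half-line (which in your write-up deserves one explicit sentence: since all $\alpha_{n}\neq0$, $f$ minus the particular solution solves the homogeneous recurrence for $n\geq N+1$, hence lies in the span of $Q^{(1)},Q^{(2)}$ there). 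Two cosmetic points: the mid-proof worry that the error is ``$o(q^{n})$ only if $(1-\nu)/2\geq1$'' is a detour you correctly resolve by using the \emph{exact} expression $Q^{(1)}_{n}=(q^{(1-\nu)n/2}-q^{\nu+(1+\nu)n/2})/(1-q^{\nu})$ rather than its leading asymptotics, so a clean version should start from that identity; and the phrase ``Cauchy--Schwarz on the tail gives $o(1)$'' should be stated in the sharpened form $o(q^{(1+\nu)n/2})$ (respectively $o(q^{(1-\nu)n/2})$), which is what your final bookkeeping actually uses.
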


\begin{proof} Let $f\in\Dom T_{\text{max}}$. That means $f\in\ell^{2}$
and $\mathcal{A^{\text{H}}\mathcal{A}}f=h\in\ell^{2}$ where $\mathcal{A}$
is defined in (\ref{eq:Acal}), with $p=q^{-1/2}$, $w=q^{(1-\nu)/2}$
(then $\mathcal{T}=q^{\nu}\mathcal{A}^{\text{H}}\mathcal{A}$). Denote
$g=\mathcal{A}f$. Hence $\mathcal{A}^{\text{H}}g=h$ and, as already
observed in the course of the proof of Lemma~\ref{prop:Amin_Amax},
there exists a constant $\gamma$ such that
\[
g_{n}=\gamma q^{\nu n/2}+q^{n/2}\sum_{k=0}^{\infty}q^{(1-\nu)k/2}h_{n+k},\ \forall n.
\]
Furthermore, the relation $\mathcal{A}f=g$ can be inverted, 
\[
f_{n}=q^{(1-\nu)n/2}\sum_{k=0}^{n}q^{\nu k/2}g_{k},\ \forall n.
\]
Whence
\begin{eqnarray*}
f_{n} & = & \frac{\gamma}{1-q^{\nu}}\left(q^{(1-\nu)n/2}-q^{\nu+(1+\nu)n/2}\right)+q^{(1-\nu)n/2}\,\sum_{k=0}^{n}q^{(1+\nu)k/2}\sum_{j=0}^{\infty}q^{(1-\nu)j/2}h_{k+j}\\
 & = & C_{1}\, q^{(1-\nu)n/2}+C_{2}\, q^{(1+\nu)n/2}+q^{n}\,\zeta_{n}
\end{eqnarray*}
where
\[
C_{1}=\frac{\gamma}{1-q^{\nu}}+\sum_{k=0}^{\infty}q^{(1+\nu)k/2}\sum_{j=0}^{\infty}q^{(1-\nu)j/2}h_{k+j},\ C_{2}=-\frac{\gamma q^{\nu}}{1-q^{\nu}}\,,
\]
and
\[
\zeta_{n}=-\sum_{k=1}^{\infty}q^{(1+\nu)k/2}\sum_{j=0}^{\infty}q^{(1-\nu)j/2}h_{n+k+j}.
\]
Bearing in mind that $h\in\ell^{2}$ one concludes, with the aid of
the Schwarz inequality, that $\zeta_{n}\to0$ as $n\to\infty$. \end{proof}

With the knowledge of the asymptotic expansion established in Lemma~\ref{prop:DTmax_asympt}
one can formulate a somewhat simpler and more explicit description
of self-adjoint extensions of $T_{\text{min}}$.

\begin{proposition} \label{prop:sa_extns_Tmin} The operator $T_{\text{min}}\equiv T_{\text{min}}^{(\nu)}$,
with $\nu>0$, is self-adjoint if and only if $\nu\geq1$. If $0<\nu<1$
then all mutually different self-adjoint extensions of $T_{\text{min}}$
are parametrized by $\kappa\in P^{1}(\mathbb{R})\equiv\mathbb{R}\cup\{\infty\}$
as follows. For $f\in\Dom T_{\text{max}}$ let $C_{1}(f)$, $C_{2}(f)$
be the constants from the asymptotic expansion (\ref{eq:f_DTmax_asympt}),
i.e.
\[
C_{1}(f)=\lim_{n\to\infty}f_{n}q^{-(1-\nu)n/2}\,,\ C_{2}(f)=\lim_{n\to\infty}\big(f_{n}-C_{1}(f)q^{(1-\nu)n/2}\big)q^{-(1+\nu)n/2}\,.
\]
For $\kappa\in P^{1}(\mathbb{R})$, a self-adjoint extension $T(\kappa)$
of $T_{\text{min}}$ is a restriction of $T_{\text{max}}$ to the
domain
\begin{equation}
\Dom T(\kappa)=\{f\in\ell^{2};\,\mathcal{T}f\in\ell^{2}\ \text{and}\ C_{2}(f)=\kappa C_{1}(f)\}.\label{eq:Dom_Tkappa}
\end{equation}
In particular, $T(\infty)$ equals the Friedrichs extension $T^{\text{F}}$.
\end{proposition}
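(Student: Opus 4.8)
The first assertion is essentially contained in the discussion preceding Lemma~\ref{prop:DTmax_asympt}. By the standard theory of real symmetric Jacobi matrices, $T_{\mathrm{min}}$ fails to be self-adjoint precisely when, for $x=0$, both solutions of \eqref{eq:three-term_Q_specpar_x} are square summable over $\mathbb{Z}_{+}$. From \eqref{eq:sols_Q1_Q2} the solution $Q^{(2)}$ is always square summable, whereas $Q_{n}^{(1)}\sim q^{(1-\nu)n/2}/(1-q^{\nu})$ decays geometrically, hence is square summable, if and only if $\nu<1$ (for $\nu\geq1$ the sequence $Q_{n}^{(1)}$ does not even tend to $0$). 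Thus the deficiency indices are $(1,1)$ for $0<\nu<1$ and $(0,0)$ otherwise, which proves the first claim.

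Assume now $0<\nu<1$. I would invoke the general description \eqref{eq:Dom_Ttkappa} with $x=0$ and the pair $Q^{(1)},Q^{(2)}$ from \eqref{eq:sols_Q1_Q2}: every self-adjoint extension of $T_{\mathrm{min}}$ is a restriction $\tilde{T}(\kappa)$ of $T_{\mathrm{max}}$ determined by $\mathcal{T}f\in\ell^{2}$ together with $W_{\infty}(f,Q^{(1)})=\kappa\,W_{\infty}(f,Q^{(2)})$, $\kappa\in\mathbb{R}\cup\{\infty\}$, and these are mutually distinct and exhaust all extensions. The task is then to rewrite the two Wronskians in terms of the coefficients $C_{1}(f),C_{2}(f)$ of the asymptotic expansion \eqref{eq:f_DTmax_asympt} furnished by Lemma~\ref{prop:DTmax_asympt}; note that $Q^{(1)},Q^{(2)}\in\Dom T_{\mathrm{max}}$, so the limits defining $W_{\infty}(f,Q^{(i)})$ exist for every $f\in\Dom T_{\mathrm{max}}$.

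To carry this out I would substitute $f_{n}=C_{1}q^{(1-\nu)n/2}+C_{2}q^{(1+\nu)n/2}+o(q^{n})$ and the closed forms \eqref{eq:sols_Q1_Q2} into $W_{n}(f,g)=\alpha_{n}\bigl(f_{n}g_{n+1}-g_{n}f_{n+1}\bigr)$, with $\alpha_{n}=-q^{-n+(\nu-1)/2}$, and pass to the limit $n\to\infty$. Writing $a=q^{(1-\nu)/2}$, $b=q^{(1+\nu)/2}$ (so that $ab=q$ and $0<b<a<1$), the ``diagonal'' contributions of orders $a^{2n}$ and $b^{2n}$ either cancel exactly, because $f$ and $Q^{(1)}$ carry the same leading geometric mode $a^{n}$, or do not occur; the ``mixed'' contributions of order $a^{n}b^{n}=q^{n}$ survive as finite limits once multiplied by $\alpha_{n}$, which is of order $q^{-n}$; and the $o(q^{n})$ remainder contributes only $o(1)$, being multiplied by $q^{-n}$ times a bounded factor. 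A routine computation then gives
\[
W_{\infty}(f,Q^{(2)})=(1-q^{\nu})\,C_{1}(f),\qquad W_{\infty}(f,Q^{(1)})=-\bigl(q^{\nu}C_{1}(f)+C_{2}(f)\bigr),
\]
which one may cross-check against the known values $W_{n}(Q^{(1)},Q^{(2)})=1$ and $W_{n}(Q^{(i)},Q^{(i)})=0$.

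Inserting these two identities into $W_{\infty}(f,Q^{(1)})=\kappa\,W_{\infty}(f,Q^{(2)})$ turns the defining relation of $\tilde{T}(\kappa)$ into $C_{2}(f)=\bigl(-q^{\nu}-\kappa(1-q^{\nu})\bigr)C_{1}(f)$. Since $1-q^{\nu}\neq0$, the assignment $\kappa\mapsto-q^{\nu}-\kappa(1-q^{\nu})$ is a bijection of $P^{1}(\mathbb{R})$ onto itself fixing $\infty$, so $\{\tilde{T}(\kappa)\}$ coincides, after this relabelling, with the family $\{T(\kappa)\}$ of \eqref{eq:Dom_Tkappa}; in particular the operators $T(\kappa)$ are mutually different self-adjoint extensions of $T_{\mathrm{min}}$ and there are no others. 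Finally, $\kappa=\infty$ in \eqref{eq:Dom_Tkappa} forces $C_{1}(f)=0$, which by the first identity above is equivalent to $W_{\infty}(f,Q^{(2)})=0$, so a comparison with \eqref{eq:Dom_TF} yields $T(\infty)=T^{\mathrm{F}}$. The only genuinely delicate step in this plan is the passage to the limit in the Wronskians: one must make sure that the $o(q^{n})$ error term of Lemma~\ref{prop:DTmax_asympt} really disappears after being amplified by $\alpha_{n}\sim q^{-n}$, and that the non-decaying diagonal terms of orders $a^{2n}$ and $b^{2n}$ cancel identically rather than merely to leading order.
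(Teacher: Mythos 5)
Your proposal is correct and follows essentially the same route as the paper: invoke the Wronskian parametrization \eqref{eq:Dom_Ttkappa} with $x=0$ and the pair $Q^{(1)},Q^{(2)}$ of \eqref{eq:sols_Q1_Q2}, insert the asymptotic expansion of Lemma~\ref{prop:DTmax_asympt} (the paper writes $f=C_1g^{(1)}+C_2g^{(2)}+h$ with $h_n=q^n\zeta_n$ and uses $W_\infty(g^{(i)},h)=0$, $W_n(g^{(1)},g^{(2)})=1-q^\nu$, which is exactly your limit computation including the vanishing of the remainder), and conclude via the affine bijection $\tilde\kappa\mapsto\kappa=-q^\nu-(1-q^\nu)\tilde\kappa$ and the identification of $\kappa=\infty$ with \eqref{eq:Dom_TF}. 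Your Wronskian values $W_\infty(f,Q^{(2)})=(1-q^\nu)C_1(f)$ and $W_\infty(f,Q^{(1)})=-(q^\nu C_1(f)+C_2(f))$ agree with the paper's computation, so no gap remains.
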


\begin{proof} Let $0<\nu<1$, $\{\zeta_{n}\}$ be a sequence converging
to zero (bounded would be sufficient) and $g^{(1)},g^{(2)},h\in\mathbb{C}^{\infty}$
be the sequences defined by
\[
g_{n}^{(1)}=q^{(1-\nu)n/2},\ g_{n}^{(2)}=q^{(1+\nu)n/2},\ h_{n}=q^{n}\zeta_{n},\ \forall n.
\]
Hence, referring to (\ref{eq:sols_Q1_Q2}),
\[
Q^{(1)}=\frac{1}{1-q^{\nu}}\,(g^{(1)}-q^{\nu}g^{(2)}),\ Q^{(2)}=g^{(2)}.
\]
One finds at once that $W_{\infty}(g^{(1)},h)=W_{\infty}(g^{(2)},h)=0$
and 
\[
W_{n}(g^{(1)},g^{(2)})=1-q^{\nu},\ \forall n.
\]
After a simple computation one deduces from (\ref{eq:Dom_Ttkappa})
that $f\in\Dom T_{\text{max}}$ belongs to $\Dom\tilde{T}(\tilde{\kappa})$
for some $\tilde{\kappa}\in P^{1}(\mathbb{R})$, i.e. $W_{\infty}(f,Q^{(1)})=\tilde{\kappa}W_{\infty}(f,Q^{(2)})$,
if and only if $C_{2}(f)=\kappa C_{1}(f)$, with $\kappa=-q^{\nu}-(1-q^{\nu})\tilde{\kappa}$.
In other words, $\tilde{T}(\tilde{\kappa})=T(\kappa)$. Since the
mapping
\[
P^{1}(\mathbb{R})\to P^{1}(\mathbb{R}):\tilde{\kappa}\mapsto\kappa=-q^{\nu}-(1-q^{\nu})\tilde{\kappa}
\]
is one-to-one, $P^{1}(\mathbb{R})\ni\kappa\mapsto T(\kappa)$ is another
parametrization of self-adjoint extensions of $T_{\text{min}}$. Particularly,
$\tilde{\kappa}=\infty$ maps to $\kappa=\infty$ and so $T(\infty)=T^{\text{F}}$.
\end{proof}

\begin{remark} One can also describe $\Dom T_{\text{min}}$. For
$\nu\geq1$ we simply have $T_{\min}=T_{\max}=T^{\text{F}}$. In the
case when $0<\nu<1$ it has been observed in \cite{BrownChristiansen}
that a sequence $f\in\Dom T_{\text{max}}$ belongs to $\Dom T_{\min}$
if and only if $W_{\infty}(f,g)=0$ for all $g\in\Dom T_{\text{max}}$.
But this is equivalent to the requirement $C_{1}(f)=C_{2}(f)=0$.
Thus one has
\begin{equation}
\Dom T_{\text{min}}=\{f\in\ell^{2};\,\mathcal{T}f\in\ell^{2}\ \text{and}\ \lim_{n\to\infty}f_{n}q^{-(1+\nu)n/2}=0\}.\label{eq:Dom_Tmin}
\end{equation}
\end{remark}

\subsection{The Green function and spectral properties}

For $\nu\geq1$ we shall write shortly $T\equiv T^{(\nu)}$ instead
of $T_{\min}=T_{\max}=T^{\text{F}}$. Referring to solutions (\ref{eq:sols_Q1_Q2})
we claim that the Green function (matrix) of $T$, if $\nu\geq1$,
or $T^{\text{F}}$, if $0<\nu<1$, reads
\begin{equation}
G_{j,k}=\begin{cases}
Q_{j}^{(1)}Q_{k}^{(2)} & \text{for}\text{ }j\leq k,\\
\noalign{\smallskip}Q_{k}^{(1)}Q_{j}^{(2)} & \text{for}\text{ }j>k.
\end{cases}\label{eq:Green_matrix}
\end{equation}

\begin{proposition} \label{prop:G_eq_Tinv} The matrix $(G_{j,k})$
defined in (\ref{eq:Green_matrix}) represents a Hilbert-Schmidt operator
$G\equiv G^{(\nu)}$ on $\ell^{2}$ with the Hilbert-Schmidt norm
\begin{equation}
\left\Vert G\right\Vert _{\text{HS}}^{\,\,2}=\frac{1+q^{2+\nu}}{(1-q^{2})(1-q^{1+\nu})^{2}(1-q^{2+\nu})}\,.\label{eq:G_HSnorm}
\end{equation}
The operator $G$ is positive and one has, $\forall f\in\ell^{2}$,
\[
\langle f,Gf\rangle=\sum_{k=0}^{\infty}q^{k}\bigg|\sum_{j=0}^{\infty}q^{(1+\nu)j/2}f_{k+j}\bigg|^{2}.
\]
Moreover, the inverse $G^{-1}$ exists and equals $T$, if $\nu\geq1$,
or $T^{\text{F}}$, if $0<\nu<1$. \end{proposition}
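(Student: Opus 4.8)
The plan is to realize $G$ as $\mathcal{C}^{\ast}\mathcal{C}$ for an explicit Hilbert--Schmidt operator $\mathcal{C}$, which delivers the Hilbert--Schmidt property, positivity and the quadratic-form identity at once, and then to show that $G$ inverts $\mathcal{T}$ by combining a purely algebraic (formal) verification of $\mathcal{T}G=I$ with the domain description of the self-adjoint extensions from Proposition~\ref{prop:sa_extns_Tmin}. Concretely, let $\mathcal{C}$ be the operator with matrix entries $\mathcal{C}_{k,m}=q^{-\nu k/2}\,q^{(1+\nu)m/2}$ for $m\ge k$ and $\mathcal{C}_{k,m}=0$ otherwise, i.e.\ $(\mathcal{C}f)_{k}=q^{k/2}\sum_{j\ge0}q^{(1+\nu)j/2}f_{k+j}$. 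A one-line geometric estimate gives $\|\mathcal{C}\|_{\text{HS}}^{2}=\sum_{k}q^{-\nu k}\sum_{m\ge k}q^{(1+\nu)m}=[(1-q)(1-q^{1+\nu})]^{-1}<\infty$, so $\mathcal{C}$ is Hilbert--Schmidt. Computing $(\mathcal{C}^{\ast}\mathcal{C})_{j,k}=q^{(1+\nu)(j+k)/2}\sum_{m=0}^{\min(j,k)}q^{-\nu m}$ and summing the finite geometric sum one finds, using $q^{(1-\nu)n/2}q^{(1+\nu)n/2}=q^{n}$, that this equals $Q_{\min(j,k)}^{(1)}Q_{\max(j,k)}^{(2)}=G_{j,k}$. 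Hence the matrix $(G_{j,k})$ represents the bounded operator $\mathcal{C}^{\ast}\mathcal{C}$; in particular $G$ is positive and trace class (a fortiori Hilbert--Schmidt), and $\langle f,Gf\rangle=\|\mathcal{C}f\|^{2}$, which is the stated formula.

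For the Hilbert--Schmidt norm one uses $\|G\|_{\text{HS}}^{2}=\operatorname{Tr}(G^{2})=\operatorname{Tr}\!\big((\mathcal{C}^{\ast}\mathcal{C})^{2}\big)=\operatorname{Tr}\!\big((\mathcal{C}\mathcal{C}^{\ast})^{2}\big)=\|\mathcal{C}\mathcal{C}^{\ast}\|_{\text{HS}}^{2}$ (cyclicity of the trace, legitimate since $\mathcal{C}$ is Hilbert--Schmidt). A short computation yields $(\mathcal{C}\mathcal{C}^{\ast})_{k,l}=(1-q^{1+\nu})^{-1}q^{(k+l)/2+(1+\nu)|k-l|/2}$, so $\|\mathcal{C}\mathcal{C}^{\ast}\|_{\text{HS}}^{2}=(1-q^{1+\nu})^{-2}\sum_{k,l\ge0}q^{k+l+(1+\nu)|k-l|}$; isolating $k=l$ and setting $l=k\pm d$ ($d\ge1$) in the off-diagonal part collapses the double sum to $\tfrac{1}{1-q^{2}}\big(1+\tfrac{2q^{2+\nu}}{1-q^{2+\nu}}\big)=\tfrac{1+q^{2+\nu}}{(1-q^{2})(1-q^{2+\nu})}$, which reproduces (\ref{eq:G_HSnorm}). (Alternatively one may sum $\sum_{j,k}G_{j,k}^{2}$ directly from the explicit exponential form of $Q^{(1)},Q^{(2)}$; the bookkeeping is routine.)

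To identify $G^{-1}$, note first that $\mathcal{C}$ is injective: from $\mathcal{C}f=0$, subtracting $q^{(1+\nu)/2}$ times the $(k\!+\!1)$-st component from the $k$-th forces $f_{k}=0$ for every $k$; hence $G=\mathcal{C}^{\ast}\mathcal{C}$ is an injective bounded positive operator, and $G^{-1}$ is a self-adjoint positive operator with dense domain $\Ran G$. It therefore suffices to show that for every $f\in\ell^{2}$ the sequence $Gf$ lies in $\Dom T^{\text{F}}$ (resp.\ in $\Dom T$ when $\nu\ge1$) and $\mathcal{T}(Gf)=f$: this reads $T^{\text{F}}G=I$, so $G^{-1}\subseteq T^{\text{F}}$, and two self-adjoint operators in such an inclusion coincide. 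The identity $\mathcal{T}(Gf)=f$ follows from the formal identity $\mathcal{T}G=I$ on $\mathbb{C}^{\infty}$: the combination $\alpha_{i-1}G_{i-1,k}+\beta_{i}G_{i,k}+\alpha_{i}G_{i+1,k}$ vanishes for $i\neq k$ because $\{Q_{n}^{(1)}\}$ and $\{Q_{n}^{(2)}\}$ solve (\ref{eq:three-term_Z}) and $\{Q_{n}^{(1)}\}$ obeys the left-endpoint relation $\beta_{0}Q_{0}^{(1)}+\alpha_{0}Q_{1}^{(1)}=0$ (recall $Q_{n}^{(1)}=\hat{P}_{n}(0)$ and see (\ref{eq:recurr_orig})), while for $i=k$ it equals the Wronskian $W_{k}(Q^{(1)},Q^{(2)})=1$ (with the endpoint relation playing the role of the recurrence at $i=0$); interchanging the finite band-summation with the absolutely convergent series $(Gf)_{j}=\sum_{k}G_{j,k}f_{k}$ (Cauchy--Schwarz, $G$ being Hilbert--Schmidt) then gives $\mathcal{T}(Gf)=f\in\ell^{2}$, so $Gf\in\Dom T_{\text{max}}$. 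For $\nu\ge1$, where $T=T_{\text{min}}=T_{\text{max}}$, this already proves $TG=I$ and hence $G^{-1}=T$. For $0<\nu<1$ one still has to check $C_{1}(Gf)=0$, i.e.\ $Gf\in\Dom T(\infty)=\Dom T^{\text{F}}$ by Proposition~\ref{prop:sa_extns_Tmin}. Splitting
\[
(Gf)_{n}=Q_{n}^{(1)}\sum_{k\ge n}Q_{k}^{(2)}f_{k}+Q_{n}^{(2)}\sum_{k<n}Q_{k}^{(1)}f_{k},
\]
the first term is $o(q^{(1-\nu)n/2})$ because $Q_{n}^{(1)}=O(q^{(1-\nu)n/2})$ and the tail $\sum_{k\ge n}Q_{k}^{(2)}f_{k}\to0$ (as $Q^{(2)}\in\ell^{2}$ always), while the second is $O(q^{(1+\nu)n/2})=o(q^{(1-\nu)n/2})$ because $Q_{n}^{(2)}=q^{(1+\nu)n/2}$ and $\sum_{k}Q_{k}^{(1)}f_{k}$ converges absolutely --- here one uses $Q^{(1)}\in\ell^{2}$, which holds precisely when $\nu<1$. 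Hence $C_{1}(Gf)=0$, so $Gf\in\Dom T^{\text{F}}$ and $T^{\text{F}}(Gf)=\mathcal{T}(Gf)=f$, giving $G^{-1}=T^{\text{F}}$.

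The step I expect to demand the most care is this final asymptotic argument: one must be certain that the contributions of the two fundamental solutions split $(Gf)_{n}$ into exactly the two orders $q^{(1-\nu)n/2}$ and $q^{(1+\nu)n/2}$ (matching Lemma~\ref{prop:DTmax_asympt}), and that the threshold $\nu=1$ forced by the square-summability of $Q^{(1)}$ is precisely the determinate/indeterminate transition already recorded in Proposition~\ref{prop:sa_extns_Tmin}; everything else reduces to geometric-series evaluations and the standard fact that a self-adjoint operator has no proper self-adjoint extension.
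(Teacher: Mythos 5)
Your proposal is correct; all the computations check out (I verified $\mathcal{C}^{\ast}\mathcal{C}=G$, $\|\mathcal{C}\mathcal{C}^{\ast}\|_{\text{HS}}^{2}=(1-q^{1+\nu})^{-2}(1+q^{2+\nu})/\bigl((1-q^{2})(1-q^{2+\nu})\bigr)$, the band identity $(\mathcal{T}G)_{i,k}=\delta_{i,k}$ including the endpoint row, and the estimate giving $C_{1}(Gf)=0$), and no circularity arises since Proposition~\ref{prop:sa_extns_Tmin} precedes this statement. The route differs from the paper's in organization rather than substance, but the differences are worth noting. The paper proves the quadratic-form identity by directly verifying the kernel identity $Q_{m}^{(1)}Q_{n}^{(2)}=\sum_{k}q^{k}(\cdots)(\cdots)$, which is exactly your factorization $G=\mathcal{C}^{\ast}\mathcal{C}$ left implicit; by making $\mathcal{C}$ explicit you get positivity and the trace-class property structurally, and you compute $\|G\|_{\text{HS}}$ via $\operatorname{Tr}\bigl((\mathcal{C}\mathcal{C}^{\ast})^{2}\bigr)$ instead of the paper's direct summation of $\sum_{j,k}G_{j,k}^{2}$ — both are routine geometric-series bookkeeping. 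For the inverse, the paper establishes both inclusions, $GT^{\text{F}}\subset I$ (via a finite summation identity whose boundary term is a Wronskian) and $T^{\text{F}}G=I$, using the Brown--Christiansen description (\ref{eq:Dom_TF}) of $\Dom T^{\text{F}}$ through the condition $W_{\infty}(Gf,Q^{(2)})=\sum_{k>n}Q_{k}^{(2)}f_{k}\to0$; you instead prove only $T^{\text{F}}G=I$ and conclude by the maximality of self-adjoint operators, checking $\Ran G\subset\Dom T^{\text{F}}$ through the asymptotic criterion $C_{1}(Gf)=0$ of Proposition~\ref{prop:sa_extns_Tmin}. Your one-sided argument is a genuine economy (the paper's $GT^{\text{F}}\subset I$ step becomes unnecessary), at the small price that the $C_{1}$-criterion is available only for $0<\nu<1$, so the two ranges $\nu\geq1$ and $0<\nu<1$ must be treated by different arguments, whereas the paper's Wronskian condition handles all $\nu>0$ uniformly.
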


\begin{proof} As is well known, if $T_{\text{min}}$ is not self-adjoint
then the resolvent of any of its self-adjoint extensions is a Hilbert-Schmidt
operator \cite[Lemma~2.19]{Teschl}. But in our case the resolvent
is claimed to be Hilbert-Schmidt for $\nu\geq1$ as well. One can
directly compute the Hilbert-Schmidt norm of $G$ for any $\nu>0$,
\begin{eqnarray*}
\sum_{j=0}^{\infty}\sum_{k=0}^{\infty}G_{j,k}^{\,\,2} & = & \sum_{j=0}^{\infty}(Q_{j}^{(1)})^{2}(Q_{j}^{(2)})^{2}+2\sum_{j=0}^{\infty}\sum_{k=j+1}^{\infty}(Q_{j}^{(1)})^{2}(Q_{k}^{(2)})^{2}\\
\noalign{\smallskip} & = & \frac{1+q^{1+\nu}}{1-q^{1+\nu}}\,\sum_{j=0}^{\infty}\left(\frac{1-q^{\nu\,(j+1)}}{1-q^{\nu}}\right)^{\!2}q^{2j}.
\end{eqnarray*}
Thus one obtains (\ref{eq:G_HSnorm}). Hence the Green matrix unambiguously
defines a self-adjoint compact operator $G$ on $\ell^{2}$.

Concerning the formula for the quadratic form one has to verify that,
for all $m,n\in\mathbb{Z}_{+}$, $m\leq n$,
\[
Q_{m}^{(1)}Q_{n}^{(2)}=\sum_{k=0}^{\infty}q^{k}\!\left(\sum_{j=0}^{\infty}q^{(1+\nu)j/2}\delta_{m,k+j}\right)\!\left(\sum_{j=0}^{\infty}q^{(1+\nu)j/2}\delta_{n,k+j}\right)\!.
\]
But this can be carried out in a straightforward manner.

A simple computation shows that for any $f\in\mathbb{C}^{\infty}$
and $n,N\in\mathbb{Z}_{+}$, $n<N$,
\[
Q_{n}^{(2)}\sum_{k=0}^{n}Q_{k}^{(1)}(\mathcal{T}f)_{k}+Q_{n}^{(1)}\sum_{k=n+1}^{N}Q_{k}^{(2)}(\mathcal{T}f)_{k}=f_{n}-Q_{n}^{(1)}\alpha_{N}\big(Q_{N+1}^{(2)}f_{N}-Q_{N}^{(2)}f_{N+1}\big).
\]
Considering the limit $N\to\infty$ one finds that, for a given $f\in\Dom T_{\max}$,
the equality $G\mathcal{T}f=f$ holds iff $W_{\infty}(f,Q^{(2)})=0$.
According to (\ref{eq:Dom_TF}), this condition determines the domain
of the Friedrichs extension $T^{\text{F}}$. Hence $GT^{\text{F}}\subset I$
(the identity operator).

Furthermore, one readily verifies that, for all $f\in\ell^{2}$, $\mathcal{T}Gf=f$.
We still have to check that $\Ran G\subset\Dom T^{\text{F}}$. But
using the equality $W_{\infty}(Q^{(1)},Q^{(2)})=1$ one computes,
for $f\in\ell^{2}$ and $n\in\mathbb{Z}_{+}$,
\[
W_{n}(Gf,Q^{(2)})=\sum_{k=n+1}^{\infty}Q_{k}^{(2)}f_{k}\to0\,\ \text{as}\ n\to\infty,
\]
since $Q^{(2)}\in\ell^{2}$. Hence $T^{\text{F}}G=I$. We conclude
that $G^{-1}=T^{\text{F}}$ (remember that we have agreed to write
$T^{\text{F}}=T$ for $\nu\geq1$). \end{proof}

Considering the case $\nu\geq1$, the fact that the Jacobi operator
$T$ is positive and $T^{-1}$ is compact has some well known consequences
for its spectral properties. The same conclusions can be made for
$0<\nu<1$ provided we replace $T$ by $T^{\text{F}}$. And from the
general theory of self-adjoint extensions one learns that $T(\kappa)$,
for $\kappa\in\mathbb{R}$, has similar properties as $T^{\text{F}}$
\cite[Theorem~8.18]{Weidmann}.

\begin{proposition} \label{prop:T_pp_semibound} The spectrum of
any of the operators $T$, if $\nu\geq1$, or $T(\kappa)$, with arbitrary
$\kappa\in P^{1}(\mathbb{R})$, if $0<\nu<1$, is pure point and bounded
from below, with all eigenvalues being simple and without finite accumulation
points. Moreover, the operator $T$, for $\nu\geq1$, or $T^{\text{F}}$,
for $0<\nu<1$, is positive definite and one has the following lower
bound on the spectrum, i.e. on the smallest eigenvalue $\xi_{1}\equiv\xi_{1}^{(\nu)}$,
\[
\xi_{1}^{\,2}\geq\frac{(1-q^{2})(1-q^{1+\nu})^{2}(1-q^{2+\nu})}{1+q^{2+\nu}}\,.
\]
\end{proposition}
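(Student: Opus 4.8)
The plan is to derive every assertion from the properties of the resolvent $G$ established in Proposition~\ref{prop:G_eq_Tinv}, supplemented by the standard multiplicity argument for Jacobi matrices and, for the remaining self-adjoint extensions, the perturbation result already quoted from \cite[Theorem~8.18]{Weidmann}.

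First I would record that Proposition~\ref{prop:G_eq_Tinv} gives that $G$ is positive, injective (with $G^{-1}=T^{\text{F}}$), and Hilbert-Schmidt, hence compact. Consequently $T^{\text{F}}$ (which is $T$ when $\nu\geq1$) is a positive self-adjoint operator with $0\notin\spec(T^{\text{F}})$, i.e. positive definite, and with compact resolvent. A positive self-adjoint operator with compact resolvent automatically has purely discrete spectrum, consisting of isolated eigenvalues of finite multiplicity, bounded from below, with no finite accumulation point. For $0<\nu<1$ and $\kappa\in\mathbb{R}$, the operator $T(\kappa)$ is a self-adjoint extension of $T_{\text{min}}$ whose resolvent differs from that of $T^{\text{F}}=T(\infty)$ by a rank-one operator; hence $T(\kappa)$ likewise has compact resolvent and the same qualitative spectral picture, which is exactly the statement imported from \cite[Theorem~8.18]{Weidmann}.

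Next I would establish simplicity of the eigenvalues by the usual Jacobi-matrix reasoning. If $f\neq0$ is an eigenvector of one of these operators for an eigenvalue $x$, then $\{f_n\}$ solves the three-term recurrence \eqref{eq:three-term_Q_specpar_x}, and the boundary relation $(\beta_0-x)f_0+\alpha_0 f_1=0$ together with the recurrence determines $f_1,f_2,\dots$ uniquely from $f_0$. Since all $\alpha_n\neq0$, $f_0=0$ would force $f\equiv0$; hence the eigenspace is one-dimensional and every eigenvalue is simple.

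Finally, the lower bound on the smallest eigenvalue $\xi_1$ of $T^{\text{F}}$ (or of $T$, for $\nu\geq1$) follows from the observation that $G=(T^{\text{F}})^{-1}$ is a positive compact self-adjoint operator, so its largest eigenvalue equals its operator norm, which in turn is bounded by its Hilbert-Schmidt norm. Writing the eigenvalues of $G$ as $\mu_n=1/\xi_n$ one gets $1/\xi_1=\mu_1=\|G\|\leq\|G\|_{\text{HS}}$, whence $\xi_1^{\,2}\geq1/\|G\|_{\text{HS}}^{\,2}$, and substituting the value \eqref{eq:G_HSnorm} yields precisely the claimed inequality. The only step needing a word of justification beyond Proposition~\ref{prop:G_eq_Tinv} is the transfer of the compact-resolvent property, for $0<\nu<1$, from the Friedrichs extension to the remaining extensions $T(\kappa)$, and that is handled by the cited perturbation theorem; everything else is a direct consequence of Proposition~\ref{prop:G_eq_Tinv} and the elementary inequality $\|G\|\leq\|G\|_{\text{HS}}$.
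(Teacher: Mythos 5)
Your proposal is correct and follows essentially the same route as the paper: compactness and positivity of $G=(T^{\text{F}})^{-1}$ from Proposition~\ref{prop:G_eq_Tinv}, uniqueness of formal eigenvectors of the Jacobi matrix for simplicity, transfer of the discrete-spectrum picture to the extensions $T(\kappa)$ via the cited result of Weidmann, and the bound $\xi_{1}\geq\|G\|_{\text{HS}}^{\,-1}$ squared against \eqref{eq:G_HSnorm}. The only point to keep explicit is that semiboundedness of $T(\kappa)$ for $\kappa\in\mathbb{R}$ does not follow from compactness of the resolvent alone but from the finite-deficiency-index statement you (and the paper) import from Weidmann, namely that at most one eigenvalue of $T(\kappa)$ lies below $\min\spec T^{\text{F}}$.
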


\begin{proof} This is a simple general fact that all formal eigenvectors
of the Jacobi matrix $\mathcal{T}$ are unique up to a multiplier
\cite{Akhiezer}. By Proposition~\ref{prop:G_eq_Tinv}, $(T^{\text{F}})^{-1}$
is compact and therefore the spectrum of $T^{\text{F}}$ is pure point
and with eigenvalues accumulating only at infinity.

For $0<\nu<1$, the deficiency indices of $T_{\text{\text{min}}}$
are $(1,1)$. Whence, by the general spectral theory, if $T^{\text{F}}$
has an empty essential spectrum then the same is true for all other
self-adjoint extensions $T(\kappa)$, $\kappa\in\mathbb{R}$. Moreover,
there is at most one eigenvalue of $T(\kappa)$ below $\xi_{1}:=\min\spec(T^{\text{F}})$,
see \cite[\S~8.3]{Weidmann}. Referring once more to Proposition~\ref{prop:G_eq_Tinv}
one has
\[
\min\spec(T^{\text{F}})=(\max\spec(G))^{-1}\geq\|G\|_{\text{HS}}^{\,-1}.
\]
In view of (\ref{eq:G_HSnorm}), one obtains the desired estimate
on $\xi_{1}$. \end{proof}

\subsection{More details on the indeterminate case}

In this subsection we confine ourselves to the case $0<\nu<1$ and
focus on some general spectral properties of the self-adjoint extensions
$T(\kappa)$, $\kappa\in P^{1}(\mathbb{R})$, in addition to those
already mentioned in Proposition~\ref{prop:T_pp_semibound}. The
spectra of any two different self-adjoint extensions of $T_{\text{min}}$
are known to be disjoint (see, for instance, proof of Theorem~4.2.4
in \cite{Akhiezer}). Moreover, the eigenvalues of such a couple of
self-adjoint extensions interlace (see \cite{SilvaWeder} and references
therein, or this can also be deduced from general properties of self-adjoint
extensions with deficiency indices $(1,1)$ \cite[\S~8.3]{Weidmann}).
It is useful to note, too, that every $x\in\mathbb{R}$ is an eigenvalue
of a unique self-adjoint extension $T(\kappa)$, $\kappa\in P^{1}(\mathbb{R})$
\cite[Theorem~4.11]{Simon}.

For positive symmetric operators there exists another powerful theory
of self-adjoint extensions due to Birman, Krein and Vishik based on
the analysis of associated quadratic forms. A clear exposition of
the theory can be found in \cite{AlonsoSimon}. Its application to
our case, with deficiency indices $(1,1)$, is as follows. A crucial
role is played by the null space of $T_{\text{max}}=T_{\text{min}}^{\,\ast}$
which we denote by
\[
\sN:=\Ker T_{\text{max}}=\mathbb{C}Q^{(1)}
\]
(recall that $Q_{n}^{(1)}=\hat{P}_{n}(0)$, $\forall n\in\mathbb{Z}_{+}$).
Let $\mathfrak{t}_{\infty}=\mathfrak{t}$ be the quadratic form associated
with the Friedrichs extension $T^{\text{F}}$. Remember that the domain
of $\mathfrak{t}$ has been specified in (\ref{eq:Dom_t}). All other
self-adjoint extensions of $T_{\text{min}}$, except of $T^{\text{F}}$,
are in one-to-one correspondence with real numbers $\tau$. The corresponding
associated quadratic forms $\mathfrak{t}_{\tau}$, $\tau\in\mathbb{R}$,
have all the same domain,
\begin{equation}
\Dom\mathfrak{t}_{\tau}=\Dom\mathfrak{t}_{\infty}\,\dot{+}\,\sN\label{eq:Dom_t_tau}
\end{equation}
(a direct sum), and for $f\in\Dom\mathfrak{t}_{\infty}$, $\lambda\in\mathbb{C}$,
one has
\begin{equation}
\mathfrak{t}_{\tau}(f+\lambda Q^{(1)})=\mathfrak{t}_{\infty}(f)+\tau|\lambda|^{2}.\label{eq:t_tau}
\end{equation}

Our next task is to relate the self-adjoint extensions $T(\kappa)$
described in Proposition~\ref{prop:sa_extns_Tmin} to the quadratic
forms $\mathfrak{t}_{\tau}$.

\begin{proposition} \label{prop:form_Tkappa} The quadratic form
associated with a self-adjoint extension $T(\kappa)$, $\kappa\in\mathbb{R}$,
is $\mathfrak{t}_{\tau}$ defined in (\ref{eq:Dom_t_tau}), (\ref{eq:t_tau}),
with $\tau=(\kappa+q^{\nu})/(1-q^{\nu})$. \end{proposition}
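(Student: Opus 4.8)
The plan is to compute the quadratic form of $T(\kappa)$ directly on a generic element of its domain and match it against the Birman--Krein--Vishik normal form \eqref{eq:Dom_t_tau}, \eqref{eq:t_tau}. First I would observe that since $T(\kappa)$ is a positive self-adjoint extension of $T_{\text{min}}$ distinct from $T^{\text{F}}$ (for $\kappa\in\mathbb{R}$), its form $\mathfrak{q}_{\kappa}$ must coincide with $\mathfrak{t}_{\tau}$ for some $\tau=\tau(\kappa)\in\mathbb{R}$; thus the whole content is to identify this $\tau$. To do that it suffices to evaluate $\mathfrak{q}_{\kappa}$ on one judiciously chosen vector of the form $f_{0}+\lambda Q^{(1)}$ with $f_{0}\in\Dom\mathfrak{t}_{\infty}$ and $\lambda\neq0$, and compare with $\mathfrak{t}_{\infty}(f_{0})+\tau|\lambda|^{2}$. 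The cleanest choice is a vector lying in $\Dom T(\kappa)$ itself, because then $\mathfrak{q}_{\kappa}(f)=\langle f,T(\kappa)f\rangle=\langle f,\mathcal{T}f\rangle$, which can be handled by the discrete Green's formula already quoted in the excerpt.

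Concretely, I would take a sequence $f\in\Dom T_{\max}$ with prescribed asymptotics $C_{1}(f)=1$, $C_{2}(f)=\kappa$ (so $f\in\Dom T(\kappa)$ by \eqref{eq:Dom_Tkappa}) and with $\mathcal{T}f$ equal to a finitely supported sequence, e.g.\ one may simply take $f_{n}=Q^{(1)}_{n}$ corrected on the tail: since $Q^{(1)}=\frac{1}{1-q^{\nu}}(g^{(1)}-q^{\nu}g^{(2)})$ already has $C_{1}=1$, $C_{2}=-q^{\nu}/(1-q^{\nu})$, I can add a multiple of $g^{(2)}=Q^{(2)}$ (which lies in $\ell^2$ and has $C_1=0$) to adjust $C_2$ to the value $\kappa$; any resulting tail error $o(q^n)$ can be absorbed into the $\Dom\mathfrak t_\infty$ part. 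Then decompose $f = f_{0}+\lambda Q^{(1)}$ according to \eqref{eq:Dom_t_tau}. The Birman--Krein--Vishik formula gives $\mathfrak{q}_{\kappa}(f)=\mathfrak{t}_{\infty}(f_{0})+\tau|\lambda|^{2}$. On the other hand, using $\langle T_{\max}f,g\rangle-\langle f,T_{\max}g\rangle=-W_{\infty}(\overline f,g)$ and the bilinearity of the Wronskian, I can expand $\langle f,\mathcal{T}f\rangle=\langle f,T(\kappa)f\rangle$ in terms of $\mathfrak{t}_\infty$ on the Friedrichs part and boundary (Wronskian) terms evaluated on $Q^{(1)}$ and $Q^{(2)}$. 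The key identities are $W_{\infty}(Q^{(1)},Q^{(2)})=1$, $W_{n}(g^{(1)},g^{(2)})=1-q^{\nu}$, and $W_{\infty}(g^{(i)},h)=0$ for $h$ with $o(q^n)$ decay, all already recorded above. Matching the coefficient of $|\lambda|^{2}$ then forces $\tau=(\kappa+q^{\nu})/(1-q^{\nu})$.

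An alternative and perhaps more transparent route, which I would run in parallel as a check, is purely form-theoretic: by \eqref{eq:Dom_t_tau} the form domain of every $T(\kappa)$, $\kappa\in\mathbb{R}$, is $\Dom\mathfrak{t}_{\infty}\dot{+}\sN$, and one just needs the value of $\mathfrak{q}_{\kappa}(Q^{(1)})$ (taking $f_0=0$, $\lambda=1$), i.e.\ $\tau=\mathfrak{q}_{\kappa}(Q^{(1)})$. Now $Q^{(1)}$ need not lie in $\Dom T(\kappa)$ — its constants are $C_{1}=1$, $C_{2}=-q^{\nu}/(1-q^{\nu})\neq\kappa$ in general — so I cannot directly write $\mathfrak{q}_\kappa(Q^{(1)})=\langle Q^{(1)},\mathcal TQ^{(1)}\rangle$. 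Instead I would evaluate the form as a limit: approximate $Q^{(1)}$ in the form norm by elements of $\Dom T(\kappa)$ obtained by truncating and re-gluing the correct asymptotic tail $C_1 g^{(1)}+\kappa g^{(2)}$, and pass to the limit. The polarization/continuity argument combined with the Wronskian bilinear form computation yields the same linear relation between $\tau$ and $\kappa$; the affine normalization (the additive $q^{\nu}$ and the factor $1-q^{\nu}$) comes out of exactly the same constants $W_{\infty}(Q^{(1)},Q^{(2)})=1$ and the change of basis between $\{g^{(1)},g^{(2)}\}$ and $\{Q^{(1)},Q^{(2)}\}$ used already in the proof of Proposition~\ref{prop:sa_extns_Tmin}, where $\kappa=-q^{\nu}-(1-q^{\nu})\tilde\kappa$ appeared.

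The main obstacle I anticipate is bookkeeping rather than conceptual: one must be careful that the chosen test vector genuinely decomposes as $f_0+\lambda Q^{(1)}$ with $f_0\in\Dom\mathfrak t_\infty$ (i.e.\ $f_0$ has $o(q^n)$ decay of the right order, cf.\ \eqref{eq:Dom_t}), and that no hidden boundary contribution is dropped when splitting the Wronskian $W_\infty(\overline f,f)$ into its $g^{(1)},g^{(2)}$ components — the cross term $W_\infty(g^{(1)},g^{(2)})=1-q^\nu$ is precisely what produces the denominator $1-q^\nu$ in $\tau$, and the $g^{(2)}$-with-itself term (which vanishes, $W_n(g^{(2)},g^{(2)})=0$) must be correctly seen to vanish. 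Once these Wronskian computations are organized, matching coefficients of $|\lambda|^2$ is immediate and gives $\tau=(\kappa+q^{\nu})/(1-q^{\nu})$.
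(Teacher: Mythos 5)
Your proposal is correct and is essentially the paper's own argument: the paper tests the very same vector, $h=\tau Q^{(2)}+Q^{(1)}$ with $\tau=(\kappa+q^{\nu})/(1-q^{\nu})$ (your $f$ up to the overall factor $1-q^{\nu}$), checks $h\in\Dom T(\kappa)$ through the boundary condition $C_{2}=\kappa C_{1}$, and equates the Birman--Krein--Vishik value $\mathfrak{t}_{\sigma}(h)=\tau^{2}\,\mathfrak{t}_{\infty}(Q^{(2)})+\sigma$ with $\langle h,\mathcal{T}h\rangle$. The only differences are that the paper bypasses your Green's-formula/Wronskian bookkeeping by using $\mathcal{T}Q^{(1)}=0$ and $(\mathcal{T}Q^{(2)})_{n}=\delta_{n,0}$ directly, and it records explicitly the two facts your sketch leaves implicit: $Q^{(2)}\in\Dom T(\infty)\subset\Dom\mathfrak{t}_{\infty}$ (membership in $\ell^{2}$ alone does not justify the decomposition (\ref{eq:Dom_t_tau})) and $\mathfrak{t}_{\infty}(Q^{(2)})=\langle Q^{(2)},\mathcal{T}Q^{(2)}\rangle=1$.
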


\begin{proof} Let $\kappa\in\mathbb{R}$ and $\sigma$ be the real
parameter such that $\mathfrak{t}_{\sigma}$ is the quadratic form
associated with $T(\kappa)$. Recall (\ref{eq:sols_Q1_Q2}). One has
$\mathcal{T}Q^{(1)}=0$ and $(\mathcal{T}Q^{(2)})_{n}=\delta_{n,0}$,
$\forall n\in\mathbb{Z}_{+}$. According to (\ref{eq:Dom_TF}),
\[
Q^{(2)}\in\Dom T(\infty)\subset\Dom\mathfrak{t}_{\infty}.
\]
One computes $\mathfrak{t}_{\infty}(Q^{(2)})=\langle Q^{(2)},\mathcal{T}Q^{(2)}\rangle=1$.
Let $\tau=(\kappa+q^{\nu})/(1-q^{\nu})$ and
\[
h=\tau Q^{(2)}+Q^{(1)}\in\Dom\mathfrak{t}_{\infty}+\mathbb{C}Q^{(1)}=\Dom\mathfrak{t}_{\sigma}.
\]
Then $(1-q^{\nu})h_{n}=q^{(1-\nu)n/2}+\kappa q^{(1+\nu)n/2}$, $\forall n\in\mathbb{Z}_{+}$.
Hence, in virtue of (\ref{eq:Dom_Tkappa}), $h\in\Dom T(\kappa)$,
and, referring to (\ref{eq:t_tau}),
\[
\tau^{2}+\sigma=\mathfrak{t}_{\sigma}(h)=\langle h,T(\kappa)h\rangle=\langle h,\mathcal{T}h\rangle=\tau(\tau+1).
\]
Whence $\sigma=\tau$. \end{proof}

Now we are ready to describe the announced additional spectral properties
of $T(\kappa)$. The terminology and basic results concerning quadratic
(sesquilinear) forms used below are taken from Kato \cite{Kato}.

\begin{lemma} \label{prop:family_type_a} Let $\sS$ and $\sB$ be
linear subspaces in a Hilbert space $\sH$ such that $\sS\cap\sB=\{0\}$,
and let $\mathfrak{s}$ and $\mathfrak{b}$ be positive quadratic
forms on $\sS$ and $\sB$, respectively. Denote by $\tilde{\mathfrak{s}}$
and $\tilde{\mathfrak{b}}$ the extensions of these forms to $\sS+\sB$
defined by
\[
\forall\varphi\in\sS,\,\forall\eta\in\sB,\ \tilde{\mathfrak{s}}(\varphi+\eta)=\mathfrak{s}(\varphi)\ \text{and}\,\ \tilde{\mathfrak{b}}(\varphi+\eta)=\mathfrak{b}(\eta),
\]
and assume that, for every $\rho\in\mathbb{R}$, the form $\tilde{\mathfrak{s}}+\rho\tilde{\mathfrak{b}}$
is semibounded and closed. Then, for any $\tau\in\mathbb{C}$, the
form $\tilde{\mathfrak{s}}+\tau\tilde{\mathfrak{b}}$ is sectorial
and closed. In particular, if $\sS+\sB$ is dense in $\sH$ then $\tilde{\mathfrak{s}}+\tau\tilde{\mathfrak{b}}$,
$\tau\in\mathbb{C}$, is a holomorphic family of forms of type (a)
in the sense of Kato. \end{lemma}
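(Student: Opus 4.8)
The plan is to verify the three assertions in turn—sectoriality, closedness, and the type-(a) property—and to reduce each to the two standing hypotheses, namely that $\sS\cap\sB=\{0\}$ and that $\tilde{\mathfrak{s}}+\rho\tilde{\mathfrak{b}}$ is semibounded and closed for every real $\rho$. Throughout I would write a generic element of $\sS+\sB$ as $u=\varphi+\eta$ with $\varphi\in\sS$, $\eta\in\sB$; by the transversality $\sS\cap\sB=\{0\}$ this decomposition is unique, so $\tilde{\mathfrak{s}}$ and $\tilde{\mathfrak{b}}$ are well defined, and for $\tau=\rho+\mathrm{i}\mu$ with $\rho,\mu\in\mathbb{R}$ one has the clean splitting into real and imaginary parts
\[
\Re\big((\tilde{\mathfrak{s}}+\tau\tilde{\mathfrak{b}})(u)\big)=(\tilde{\mathfrak{s}}+\rho\tilde{\mathfrak{b}})(u),\qquad \Im\big((\tilde{\mathfrak{s}}+\tau\tilde{\mathfrak{b}})(u)\big)=\mu\,\tilde{\mathfrak{b}}(u),
\]
since $\tilde{\mathfrak{s}}$ and $\tilde{\mathfrak{b}}$ are real (indeed nonnegative) on their common domain.

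First I would prove sectoriality. Fix $\tau=\rho+\mathrm{i}\mu$. By hypothesis $\tilde{\mathfrak{s}}+\rho\tilde{\mathfrak{b}}$ is semibounded, say $(\tilde{\mathfrak{s}}+\rho\tilde{\mathfrak{b}})(u)\geq -c\|u\|^{2}$ for some $c\geq 0$ and all $u$ in the domain. I also need a semiboundedness constant going the other way for the imaginary part, and here is the small trick: apply the hypothesis with the shifted parameter $\rho+1$ (or $\rho-1$) to get $(\tilde{\mathfrak{s}}+(\rho+1)\tilde{\mathfrak{b}})(u)\geq -c'\|u\|^{2}$, which rearranges to $\tilde{\mathfrak{b}}(u)\leq (\tilde{\mathfrak{s}}+\rho\tilde{\mathfrak{b}})(u)+c'\|u\|^{2}$. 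Combining, one bounds $|\Im(\cdot)|=|\mu|\,\tilde{\mathfrak{b}}(u)$ by a constant times $\big(\Re(\cdot)+c\|u\|^{2}\big)$ plus $|\mu|c'\|u\|^{2}$, which is exactly the numerical-range estimate defining a sectorial form after translating by the real number $c$. So $\tilde{\mathfrak{s}}+\tau\tilde{\mathfrak{b}}$ is sectorial with vertex $-c$ and some finite half-angle depending on $\mu$.

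Next, closedness. A sectorial form is closed iff its real part (as a symmetric form) is closed and the imaginary part is relatively bounded—more precisely, by Kato \cite[Ch.~VI, Thm.~1.33]{Kato}, a sectorial form $\mathfrak{a}$ with $\Re\mathfrak{a}=\mathfrak{h}$ is closed iff $\mathfrak{h}$ is closed, since the form norm of $\mathfrak{a}$ is then equivalent to that of $\mathfrak{h}$. Here $\Re(\tilde{\mathfrak{s}}+\tau\tilde{\mathfrak{b}})=\tilde{\mathfrak{s}}+\rho\tilde{\mathfrak{b}}$, which is closed by hypothesis; and the estimate from the previous paragraph shows $\|u\|^{2}_{\tilde{\mathfrak{s}}+\tau\tilde{\mathfrak{b}}}$ and $\|u\|^{2}_{\tilde{\mathfrak{s}}+\rho\tilde{\mathfrak{b}}}$ are equivalent norms on the common domain. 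Hence $\tilde{\mathfrak{s}}+\tau\tilde{\mathfrak{b}}$ is closed. Finally, if $\sS+\sB$ is dense in $\sH$, then each form in the family is densely defined, sectorial and closed, and the family depends affinely (hence holomorphically) on $\tau$; checking that all forms share the common form-core (they have literally the same domain, with equivalent form norms) gives the defining property of a holomorphic family of type (a) in the sense of Kato \cite[Ch.~VII, \S4]{Kato}.

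The main obstacle is the sectoriality step, specifically extracting a one-sided bound on $\tilde{\mathfrak{b}}$ itself from the hypothesis, which only gives semiboundedness of the combinations $\tilde{\mathfrak{s}}+\rho\tilde{\mathfrak{b}}$. The shift-by-one device above resolves it, but one must be a little careful that the semiboundedness constants $c=c(\rho)$ and $c'=c(\rho+1)$ are genuinely finite for the fixed $\rho$ at hand (they are, by hypothesis, for every real parameter), and that the resulting half-angle is strictly less than $\pi/2$—which it is, since the coefficient of $\|u\|^{2}$-free part of $|\Im|$ is controlled by a fixed multiple of $\Re$ plus lower-order terms. Everything else is bookkeeping with Kato's definitions.
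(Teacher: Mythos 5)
Your argument is correct and follows essentially the same route as the paper: dominate $|\Im\tau|\,\tilde{\mathfrak{b}}$ by the real part $\tilde{\mathfrak{s}}+\Re(\tau)\tilde{\mathfrak{b}}$ plus a multiple of $\|\cdot\|^{2}$ by invoking the hypothesis at a suitable real parameter, deduce sectoriality, and then use the fact that a sectorial form is closed if and only if its real part is closed, the real part being $\tilde{\mathfrak{s}}+\Re(\tau)\tilde{\mathfrak{b}}$, which is closed by hypothesis; the type (a) conclusion follows, as you say, from the common domain and the affine dependence on $\tau$. One small slip: the bound $\tilde{\mathfrak{b}}(u)\leq(\tilde{\mathfrak{s}}+\rho\tilde{\mathfrak{b}})(u)+c'\|u\|^{2}$ comes from the hypothesis at the downward-shifted parameter $\rho-1$ (your parenthetical alternative), not at $\rho+1$, where the rearrangement gives only a lower bound on $\tilde{\mathfrak{b}}$.
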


\begin{proof} Fix $\tau\in\mathbb{C}$, $\theta\in(\pi/4,\pi/2)$,
and choose $\gamma_{1},\gamma_{2},\gamma_{3}\in\mathbb{R}$ so that
\[
\tilde{\mathfrak{s}}+\Re(\tau)\tilde{\mathfrak{b}}\geq\gamma_{1},\,\ (\tan(\theta)-1)\tilde{\mathfrak{s}}+\tan(\theta)\Re(\tau)\tilde{\mathfrak{b}}\geq\gamma_{2},\,\ \tilde{\mathfrak{s}}-|\Im(\tau)|\tilde{\mathfrak{b}}\geq\gamma_{3}.
\]
Let $\gamma=\min\{\gamma_{1},\cot(\theta)(\gamma_{2}+\gamma_{3})\}$.
Then $\Re(\tilde{\mathfrak{s}}+\tau\tilde{\mathfrak{b}})\geq\gamma$
and, for any couple $\varphi\in\sS$, $\eta\in\sB$,
\begin{eqnarray*}
|\Im(\tilde{\mathfrak{s}}+\tau\tilde{\mathfrak{b}})(\varphi+\eta)|=|\Im\tau|\mathfrak{b}(\eta) & \!\leq & \!\mathfrak{s}(\varphi)-\gamma_{3}\|\varphi+\eta\|^{2}\\
 & \!\leq & \!\tan(\theta)\big(\mathfrak{s}(\varphi)+\Re(\tau)\mathfrak{b}(\eta)\big)-(\gamma_{2}+\gamma_{3})\|\varphi+\eta\|^{2}\\
 & \!\leq & \!\tan(\theta)\big(\mathfrak{s}(\varphi)+\Re(\tau)\mathfrak{b}(\eta)-\gamma\|\varphi+\eta\|^{2}\big).
\end{eqnarray*}
This estimates show that $\tilde{\mathfrak{s}}+\tau\tilde{\mathfrak{b}}$
is sectorial. Finally, a sectorial form is known to be closed if and
only if its real part is closed. \end{proof}

\begin{proposition} \label{prop:xi_n} Let $\{\xi_{n}(\kappa);\, n\in\mathbb{N}\}$
be the eigenvalues of $T(\kappa)$, $\kappa\in P^{1}(\mathbb{R})$,
ordered increasingly. Then for every $n\in\mathbb{N}$, $\xi_{n}(\kappa)$
is a real-analytic strictly increasing function on $\mathbb{R}$,
and one has, $\forall\kappa\in\mathbb{R}$,
\begin{equation}
\xi_{1}(\kappa)<\xi_{1}(\infty)<\xi_{2}(\kappa)<\xi_{2}(\infty)<\xi_{3}(\kappa)<\xi_{3}(\infty)<\ldots.\label{eq:xi_ineqs}
\end{equation}
Moreover,
\begin{equation}
\lim_{\kappa\to-\infty}\xi_{1}(\kappa)=-\infty,\ \lim_{\kappa\to-\infty}\xi_{n}(\kappa)=\xi_{n-1}(\infty)\ \text{for}\ n\geq2,\ \lim_{\kappa\to+\infty}\xi_{n}(\kappa)=\xi_{n}(\infty)\ \text{for}\ n\geq1.\label{eq:lim_xi}
\end{equation}
\end{proposition}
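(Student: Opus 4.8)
The plan is to exploit the description of the associated quadratic forms $\mathfrak{t}_\tau$ given in Proposition \ref{prop:form_Tkappa} together with Lemma \ref{prop:family_type_a}, and then apply standard analytic perturbation theory for forms of type (a) from Kato \cite{Kato}. First I would set $\sS=\Dom\mathfrak{t}_\infty$, $\sB=\sN=\mathbb{C}Q^{(1)}$, $\mathfrak{s}=\mathfrak{t}_\infty$, $\mathfrak{b}(\lambda Q^{(1)})=|\lambda|^2$, and observe that $\sS\cap\sB=\{0\}$ because $Q^{(1)}\notin\Dom\mathfrak{t}_\infty$ (indeed $C_2(Q^{(1)})\neq 0=\infty\cdot C_1(Q^{(1)})$ fails in the notation of Proposition \ref{prop:sa_extns_Tmin}), and that $\sS+\sB$ is dense since it already contains $\sD$. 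By (\ref{eq:t_tau}) the form $\tilde{\mathfrak{s}}+\tau\tilde{\mathfrak{b}}$ is exactly $\mathfrak{t}_\tau$ for real $\tau$, and each such form is closed and semibounded (it is associated with the self-adjoint, semibounded operator $T(\kappa)$ via Proposition \ref{prop:form_Tkappa}); hence the hypotheses of Lemma \ref{prop:family_type_a} hold, and $\{\mathfrak{t}_\tau\}_{\tau\in\mathbb{C}}$ is a holomorphic family of type (a). Since the resolvent of $T^{\text{F}}$ is compact (Proposition \ref{prop:G_eq_Tinv}) and the domains of the forms $\mathfrak{t}_\tau$ all coincide, the operators $T(\kappa)$ have compact resolvent and discrete spectrum for every $\kappa$; by Kato's theory the eigenvalues, suitably enumerated, are branches of analytic functions of $\tau$, hence of $\kappa$ via the affine change $\tau=(\kappa+q^\nu)/(1-q^\nu)$. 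Because each $T(\kappa)$ is self-adjoint with simple eigenvalues (Proposition \ref{prop:T_pp_semibound}), no branch crossings occur on the real axis, so the increasingly-ordered eigenvalues $\xi_n(\kappa)$ are themselves real-analytic on $\mathbb{R}$.

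Next I would establish monotonicity. By (\ref{eq:t_tau}), if $\tau<\tau'$ then $\mathfrak{t}_\tau\le\mathfrak{t}_{\tau'}$ on their common domain, with strict inequality on the one-dimensional complement $\sN$; since the inclusion $\tau\mapsto\kappa$ is orientation-preserving ($1-q^\nu>0$), the forms are monotone increasing in $\kappa$ as well. The min-max principle then gives $\xi_n(\kappa)\le\xi_n(\kappa')$ for $\kappa<\kappa'$. Strict monotonicity follows from the Feynman-Hellmann-type formula $\frac{d}{d\kappa}\xi_n(\kappa)=\frac{1}{1-q^\nu}|\langle\psi_n(\kappa),Q^{(1)}\rangle_{\text{normalized in }\mathfrak{b}}|^2>0$, where $\psi_n(\kappa)$ is the corresponding normalized eigenvector — the inner product cannot vanish, for otherwise $\psi_n(\kappa)$ would lie in $\Dom\mathfrak{t}_\infty$ and be an eigenvector of $T^{\text{F}}$ too, forcing $T(\kappa)$ and $T^{\text{F}}$ to share an eigenvalue, which contradicts the disjointness of spectra of distinct self-adjoint extensions quoted above. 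Alternatively, one deduces strictness directly from the interlacing statement once (\ref{eq:xi_ineqs}) is in hand.

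For the interlacing inequalities (\ref{eq:xi_ineqs}) I would invoke the general fact, already cited in the text, that the eigenvalues of two distinct self-adjoint extensions of a symmetric operator with deficiency indices $(1,1)$ strictly interlace, applied to the pair $T(\kappa)$ and $T(\infty)=T^{\text{F}}$; alternatively, since $\Dom\mathfrak{t}_\infty\subset\Dom\mathfrak{t}_\tau$ is a codimension-one restriction of form domain, the min-max principle forces $\xi_n(\kappa)\le\xi_n(\infty)\le\xi_{n+1}(\kappa)$, and strictness again comes from simplicity of the spectra and disjointness. Finally, for the limits (\ref{eq:lim_xi}) I would argue as follows: as $\kappa\to+\infty$ (equivalently $\tau\to+\infty$), the penalization term $\tau\tilde{\mathfrak{b}}$ drives the minimizers to kill their $\sN$-component, so $\mathfrak{t}_\tau$ converges in the strong resolvent sense — in fact in norm resolvent sense, by compactness — to the form of $T^{\text{F}}$; monotone convergence of forms then yields $\xi_n(\kappa)\uparrow\xi_n(\infty)$ for each $n$, consistent with (\ref{eq:xi_ineqs}). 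As $\kappa\to-\infty$, the lowest branch $\xi_1(\kappa)$ is unbounded below: taking $f=Q^{(1)}$ as a trial vector (after noting $Q^{(1)}\in\Dom\mathfrak{t}_\tau$) gives $\mathfrak{t}_\tau(Q^{(1)})=\tau\to-\infty$, so $\xi_1(\kappa)\le\tau\|Q^{(1)}\|^{-2}\to-\infty$; meanwhile, by the interlacing (\ref{eq:xi_ineqs}) each higher branch satisfies $\xi_{n-1}(\infty)<\xi_n(\kappa)<\xi_n(\infty)$ for $n\ge 2$ and, being decreasing as $\kappa$ decreases and bounded below by $\xi_{n-1}(\infty)$, must converge; the limit is identified as $\xi_{n-1}(\infty)$ by the same min-max/strong-resolvent argument (the single eigenvalue that escapes to $-\infty$ is "used up" at the bottom, shifting the labeling by one). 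The main obstacle I anticipate is checking carefully that $\{\mathfrak{t}_\tau\}$ genuinely satisfies the type-(a) hypotheses — specifically verifying that each $\mathfrak{t}_\tau$, $\tau\in\mathbb{R}$, is closed, which we get for free from Proposition \ref{prop:form_Tkappa} identifying it as the form of the self-adjoint operator $T(\kappa)$, so in fact Lemma \ref{prop:family_type_a} absorbs this difficulty; the remaining care is in correctly tracking the shift of eigenvalue indices in the two limits, which the interlacing inequalities (\ref{eq:xi_ineqs}) make routine.
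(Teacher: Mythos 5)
Most of your argument runs along the same lines as the paper: the type~(a) family built from Lemma~\ref{prop:family_type_a} together with Proposition~\ref{prop:form_Tkappa} and Kato's analytic perturbation theory gives real-analyticity; monotonicity of the forms $\mathfrak{t}_{\tau}$ plus min-max gives $\xi_{n}(\kappa_{1})\leq\xi_{n}(\kappa_{2})$, with strictness from disjointness of spectra of distinct extensions (your Feynman--Hellmann derivative formula is a correct, if more elaborate, substitute); and (\ref{eq:xi_ineqs}) follows from the quoted interlacing for deficiency indices $(1,1)$ together with the maximality of the Friedrichs extension. (Minor slip: $Q^{(1)}\notin\Dom\mathfrak{t}_{\infty}$ because the decay condition in (\ref{eq:Dom_t}) fails, i.e.\ $C_{1}(Q^{(1)})=1/(1-q^{\nu})\neq0$; your parenthetical about $C_{2}$ conflates the form domain with $\Dom T(\infty)$.)

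The genuine gap is in the limits (\ref{eq:lim_xi}). Your trial-vector computation $\mathfrak{t}_{\tau}(Q^{(1)})=\tau$ does give $\xi_{1}(\kappa)\to-\infty$, but the identification $\lim_{\kappa\to-\infty}\xi_{n}(\kappa)=\xi_{n-1}(\infty)$ for $n\geq2$ (and, to a lesser extent, $\lim_{\kappa\to+\infty}\xi_{n}(\kappa)=\xi_{n}(\infty)$) is asserted rather than proved. ``The same min-max/strong-resolvent argument'' does not apply as $\kappa\to-\infty$: the forms $\mathfrak{t}_{\tau}$ are then \emph{decreasing} and unbounded below, so the monotone convergence theorem for forms is unavailable in that direction, and your claim of norm resolvent convergence ``by compactness'' is not a proof---you would need something like Krein's resolvent formula (the resolvent difference of $T(\kappa)$ and $T^{\text{F}}$ is rank one with coefficient vanishing as $\kappa\to\pm\infty$) plus spectral exactness to rule out the possibility that $\xi_{n}(\kappa)$ stalls at some value strictly above $\xi_{n-1}(\infty)$. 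The paper closes this step much more cheaply, using a fact it quotes earlier (and which you never invoke): every $x\in\mathbb{R}$ is an eigenvalue of exactly one extension $T(\kappa)$, $\kappa\in P^{1}(\mathbb{R})$. Combined with continuity, strict monotonicity and (\ref{eq:xi_ineqs}), this forces the range of $\xi_{n}$ to be the whole interval $(-\infty,\xi_{1}(\infty))$ for $n=1$ and $(\xi_{n-1}(\infty),\xi_{n}(\infty))$ for $n\geq2$, which is exactly (\ref{eq:lim_xi}). Either supply the Krein-formula/spectral-exactness argument or switch to this exhaustion argument; as written, the $\kappa\to-\infty$ identification is incomplete.
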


\begin{proof} The Friedrichs extension of a positive operator is
maximal in the form sense among all self-adjoint extensions of that
operator \cite{AlonsoSimon}. Particularly,
\[
\xi_{1}(\kappa)=\min(\spec T(\kappa))\leq\xi_{1}(\infty)=\min(\spec T(\infty)).
\]
But as already remarked above, the eigenvalues of $T(\kappa)$ and
$T(\infty)$ interlace and so we have (\ref{eq:xi_ineqs}).

Referring to (\ref{eq:Dom_t_tau}), (\ref{eq:t_tau}), the property
$\kappa_{1},\kappa_{2}\in\mathbb{R}$, $\kappa_{1}<\kappa_{2}$ clearly
implies $\mathfrak{t}_{\tau(\kappa_{1})}<\mathfrak{t}_{\tau(\kappa_{2})}$
where $\tau(\kappa)=(\kappa+q^{\nu})/(1-q^{\nu})$. In virtue of Proposition~\ref{prop:form_Tkappa}
and the min-max principle, $\xi_{n}(\kappa_{1})\leq\xi_{n}(\kappa_{2})$,
$\forall n\in\mathbb{N}$. But the spectra of $T(\kappa_{1})$ and
$T(\kappa_{2})$ are disjoint and so the functions $\xi_{n}(\kappa)$
are strictly increasing on $\mathbb{R}$.

One can admit complex values for the parameter $\tau$ in (\ref{eq:Dom_t_tau}),
(\ref{eq:t_tau}). Then, according to Lemma~\ref{prop:family_type_a},
the family of forms $\mathfrak{t}_{\tau}$, $\tau\in\mathbb{C}$,
is of type (a) in the sense of Kato. Referring once more to Proposition~\ref{prop:form_Tkappa}
one infers from \cite[Theorem~VII-4.2]{Kato} that the family of self-adjoint
operators $T(\kappa)$, $\kappa\in\mathbb{R}$, extends to a holomorphic
family of operators on $\mathbb{C}$. This implies that for any bounded
interval $K\subset\mathbb{R}$ there exists an open neighborhood $D$
of $K$ in $\mathbb{C}$ and $\mathbb{\rho\in\mathbb{R}}$ sufficiently
large so that the resolvents $(T(\kappa)+\rho)^{-1}$, $\kappa\in K$,
extend to a holomorphic family of bounded operators on $D$. In addition
we know that, for every fixed $n\in\mathbb{N}$ and $\kappa\in\mathbb{R}$,
the $n$th eigenvalue of $T(\kappa)$ is simple and isolated. By the
analytic perturbation theory \cite[\S~VII.3]{Kato}, $\xi_{n}(\kappa)$
is an analytic function on $\mathbb{R}$.

Finally we note that every $x\in\mathbb{R}$ is an eigenvalue of $T(\kappa)$
for some (in fact, unambiguous) $\kappa\in\mathbb{R}$ and so the
range $\xi_{n}(\mathbb{R})$ must exhaust the entire interval either
$(-\infty,\xi_{1}(\infty))$, if $n=1$, or $(\xi_{n-1}(\infty),\xi_{n}(\infty))$,
if $n>1$. This clearly means that (\ref{eq:lim_xi}) must hold. \end{proof}

\begin{remark} As noted in \cite[Theorem~2.15]{AlonsoSimon}, $\xi_{1}(\kappa)$
is a concave function. \end{remark}

\section{The characteristic function}

\subsection{A construction of the characteristic function for $\nu>0$}

Recall (\ref{eq:Pn_def}), (\ref{eq:Phat_def}). Observe that the
sequence $\{\hat{P}_{n}(x)\}$ obeys the relation
\begin{equation}
\hat{P}_{n}(x)=Q_{n}^{(1)}-xq^{(1-\nu)/2}\,\sum_{k=0}^{n}Q_{n-k-1}^{(1)}q^{k}\hat{P}_{k}(x)\ \text{ }\text{for}\text{ }n\geq-1.\label{eq:2nd_recurr_Phat}
\end{equation}
This relation already implies that $\hat{P}_{-1}(x)=0$, $\hat{P}_{0}(x)=1$.
Notice also that the last term in the sum, with $k=n$, is zero and
so (\ref{eq:2nd_recurr_Phat}) is in fact a recurrence for $\{\hat{P}_{n}(x)\}$.
Equation (\ref{eq:2nd_recurr_Phat}) is pretty standard. Nevertheless,
one may readily verify it by checking that this recurrence implies
the original defining recurrence, i.e. the formal eigenvalue equation
(\ref{eq:recurr_orig}) which can be rewritten as follows
\begin{equation}
q^{(\nu-1)/2}\hat{P}_{n+1}(x)-(1+q^{\nu})\hat{P}_{n}(x)+q^{(\nu+1)/2}\hat{P}_{n-1}(x)=-xq^{n}\hat{P}_{n}(x),\text{ }\forall n\geq0.\label{eq:eigeval_recurr_Phat}
\end{equation}
Actually, from (\ref{eq:2nd_recurr_Phat}) one derives that
\begin{eqnarray*}
q^{(\nu-1)/2}\hat{P}_{n+1}(x)-\hat{P}_{n}(x) & = & q^{(\nu-1)/2}Q_{n+1}^{(1)}-Q_{n}^{(1)}\\
 &  & -x\sum_{k=0}^{n}\big(Q_{n-k}^{(1)}-q^{(1-\nu)/2}Q_{n-k-1}^{(1)}\big)q^{k}\hat{P}_{k}(x)\\
 & = & q^{\nu+(1+\nu)n/2}-xq^{(1+\nu)n/2}\,\sum_{k=0}^{n}q^{(1-\nu)k/2}\hat{P}_{k}(x)
\end{eqnarray*}
and so
\[
q^{(\nu-1)/2}\hat{P}_{n+1}(x)-\hat{P}_{n}(x)-q^{(\nu+1)/2}\big(q^{(\nu-1)/2}\hat{P}_{n}(x)-\hat{P}_{n-1}(x)\big)=-xq^{n}\hat{P}_{n}(x),
\]
as claimed.

\begin{proposition} \label{prop:Phi} The sequence of polynomials
$\{q^{(\nu-1)n/2}\hat{P}_{n}(x);\, n\in\mathbb{Z}_{+}\}$ converges
locally uniformly on $\mathbb{C}$ to an entire function $\Phi(x)\equiv\Phi^{(\nu)}(x;q)$.
Moreover, $\Phi(x)$ fulfills
\begin{equation}
\Phi(x)=\frac{1}{1-q^{\nu}}\!\left(1-x\sum_{k=0}^{\infty}q^{(1+\nu)k/2}\hat{P}_{k}(x)\right)\label{eq:Phi}
\end{equation}
and one has, $\forall n\in\mathbb{Z}$, $n\geq-1$, 
\begin{equation}
\hat{P}_{n}(x)=(1-q^{\nu})\Phi(x)Q_{n}^{(1)}+xQ_{n}^{(2)}\sum_{k=0}^{n}Q_{k}^{(1)}\hat{P}_{k}(x)+xQ_{n}^{(1)}\sum_{k=n+1}^{\infty}Q_{k}^{(2)}\hat{P}_{k}(x).\label{eq:Phat_Phi}
\end{equation}
\end{proposition}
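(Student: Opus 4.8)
The plan is to carry out all the analysis in terms of the rescaled quantities $u_n := q^{(\nu-1)n/2}\hat{P}_n(x)$, whose pointwise limit will be identified with $\Phi(x)$. First I would recast the recurrence (\ref{eq:2nd_recurr_Phat}): multiplying it by $q^{(\nu-1)n/2}$ and substituting the explicit expressions (\ref{eq:sols_Q1_Q2}) for $Q^{(1)}$, a short computation gives
\[
u_n=\frac{1-q^{\nu+n}}{1-q^\nu}-\frac{x}{1-q^\nu}\sum_{k=0}^{n}q^{k}u_k+\frac{x\,q^{\nu n}}{1-q^\nu}\sum_{k=0}^{n}q^{(1-\nu)k}u_k,
\]
and the $k=n$ terms of the two sums cancel each other (as expected from the observation that (\ref{eq:2nd_recurr_Phat}) is in fact a recurrence), so the summation upper limit can be lowered to $n-1$. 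Everything that follows rests on this identity.

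The key step, and the one I expect to be the main obstacle, is an a priori bound on $u_n$ that is uniform in $n$ and locally bounded in $x$. The delicate point is the last sum, because $q^{(1-\nu)k}$ grows when $\nu>1$; it is handled by the elementary inequality $\nu n+(1-\nu)k=\nu(n-k)+k\ge\nu+k$, valid for $0\le k\le n-1$, which yields $q^{\nu n}q^{(1-\nu)k}\le q^{\nu}q^{k}$. Combined with the trivial estimate of the first sum this gives
\[
|u_n|\le\frac{1}{1-q^\nu}+\frac{|x|\,(1+q^\nu)}{1-q^\nu}\sum_{k=0}^{n-1}q^{k}|u_k|,
\]
and, since $\sum_k q^{k}<\infty$, a discrete Gronwall inequality furnishes $|u_n|\le C(x)$ for all $n$, with $C(x)=(1-q^\nu)^{-1}\exp\!\big(|x|(1+q^\nu)/((1-q^\nu)(1-q))\big)$ locally bounded on $\mathbb{C}$.

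Armed with this bound I would pass to the limit $n\to\infty$ in the identity above. Since $|q^{k}u_k|\le q^{k}C(x)$, the series $\sum_{k\ge0}q^{k}u_k=\sum_{k\ge0}q^{(1+\nu)k/2}\hat{P}_k(x)$ converges absolutely and locally uniformly, and hence, being a locally uniform limit of polynomials, defines an entire function. For the remaining term one splits the sum at a fixed index $m$: on the finite part $q^{\nu n}\to0$, while the tail is bounded by $q^{\nu}C(x)\sum_{k\ge m}q^{k}$, which is small for large $m$ uniformly in $n$; this shows $q^{\nu n}\sum_{k=0}^{n-1}q^{(1-\nu)k}u_k\to0$, locally uniformly. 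Consequently $u_n$ converges locally uniformly on $\mathbb{C}$ to the entire function $\Phi(x)=(1-q^\nu)^{-1}\big(1-x\sum_{k\ge0}q^{(1+\nu)k/2}\hat{P}_k(x)\big)$, which is precisely (\ref{eq:Phi}) together with the asserted convergence.

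For the identity (\ref{eq:Phat_Phi}) I would apply the summation formula obtained in the course of the proof of Proposition~\ref{prop:G_eq_Tinv} to the sequence $f$ with entries $f_k=\hat{P}_k(x)$. By (\ref{eq:recurr_orig}) one has $(\mathcal{T}f)_k=x\hat{P}_k(x)$ for every $k$, so for $n\in\mathbb{Z}_+$ and $N>n$ that formula becomes
\[
x\,Q_n^{(2)}\sum_{k=0}^{n}Q_k^{(1)}\hat{P}_k(x)+x\,Q_n^{(1)}\sum_{k=n+1}^{N}Q_k^{(2)}\hat{P}_k(x)=\hat{P}_n(x)-Q_n^{(1)}\alpha_N\big(Q_{N+1}^{(2)}\hat{P}_N(x)-Q_N^{(2)}\hat{P}_{N+1}(x)\big).
\]
A direct evaluation using (\ref{eq:alpha_beta}) and (\ref{eq:sols_Q1_Q2}) gives $\alpha_N Q_{N+1}^{(2)}=-q^{\nu}q^{(\nu-1)N/2}$ and $\alpha_N Q_N^{(2)}=-q^{(\nu-1)(N+1)/2}$, so the bracketed boundary term equals $-q^{\nu}u_N+u_{N+1}$, which tends to $(1-q^\nu)\Phi(x)$ by the previous paragraph; moreover $\sum_{k>n}Q_k^{(2)}\hat{P}_k(x)$ converges by the a priori bound. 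Letting $N\to\infty$ then yields (\ref{eq:Phat_Phi}) for $n\ge0$, and the case $n=-1$ is immediate because then $Q_{-1}^{(1)}=0$ and the first sum is empty.
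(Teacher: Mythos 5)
Your argument is correct. The first half (the a priori bound and the passage to the limit giving (\ref{eq:Phi})) is essentially the paper's own proof: the recast identity you derive is exactly the paper's intermediate relation, your discrete Gronwall product bound is the paper's $q$-Pochhammer estimate $|u_n|\le(-a;q)_n/(1-q^\nu)$ in slightly weaker form, and your direct splitting argument for locally uniform convergence is marginally more elementary than the paper's appeal to Montel's theorem (one small slip: the inhomogeneous term should read $(1-q^{\nu(n+1)})/(1-q^\nu)$, not $(1-q^{\nu+n})/(1-q^\nu)$; this is harmless since both are bounded by $(1-q^\nu)^{-1}$ and tend to the same limit). Where you genuinely diverge is in the derivation of (\ref{eq:Phat_Phi}): the paper re-expands the kernel $q^{(1-\nu)/2}q^{k}Q^{(1)}_{n-k-1}$ in (\ref{eq:2nd_recurr_Phat}) as $Q_n^{(1)}Q_k^{(2)}-Q_n^{(2)}Q_k^{(1)}$, adds and subtracts the tail $\sum_{k>n}Q_k^{(2)}\hat{P}_k(x)$, and invokes (\ref{eq:Phi}), whereas you instead apply the variation-of-parameters identity from the proof of Proposition~\ref{prop:G_eq_Tinv} to $f=\hat{P}(x)$ with $\mathcal{T}\hat{P}(x)=x\hat{P}(x)$, evaluate the boundary term $\alpha_N\big(Q_{N+1}^{(2)}\hat{P}_N-Q_N^{(2)}\hat{P}_{N+1}\big)=-q^\nu u_N+u_{N+1}\to(1-q^\nu)\Phi(x)$, and let $N\to\infty$; your boundary-term computation is correct, the citation is legitimate (that identity holds for arbitrary $f\in\mathbb{C}^\infty$ and precedes this proposition, so there is no circularity), and the tail converges by your a priori bound. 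The two routes are two faces of the same discrete Green's kernel; yours buys a shorter computation by reusing an identity already established in the paper, while the paper's stays self-contained within the proof and exhibits (\ref{eq:Phat_Phi}) as a direct algebraic consequence of (\ref{eq:2nd_recurr_Phat}) and (\ref{eq:Phi}).
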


\begin{proof} Denote (temporarily) $H_{n}(x)=q^{(\nu-1)n/2}\hat{P}_{n}(x)$,
$n\in\mathbb{Z}_{+}$. Then (\ref{eq:2nd_recurr_Phat}) means that
\begin{equation}
(1-q^{\nu})H_{n}(x)=1-q^{\nu(n+1)}-x\sum_{k=0}^{n-1}(1-q^{\nu(n-k)})q^{k}H_{k}(x),\ n\in\mathbb{Z}_{+}.\label{eq:Hn_recurr}
\end{equation}
Proceeding by mathematical induction in $n$ one can show that, $\forall n\in\mathbb{Z}_{+}$,
\begin{equation}
|H_{n}(x)|\leq\frac{(-a;q)_{n}}{1-q^{\nu}}\,\ \text{where}\ a=\frac{|x|}{1-q^{\nu}}\,\ \text{and}\ (-a;q)_{n}=\prod_{k=0}^{n-1}(1+q^{k}a)\label{eq:Hn_estim}
\end{equation}
is the $q$-Pochhammer symbol. This is obvious for $n=0$. For the
the induction step it suffices to notice that (\ref{eq:Hn_recurr})
implies
\[
|H_{n}(x)|\leq\frac{1}{1-q^{\nu}}+a\sum_{k=0}^{n-1}q^{k}|H_{k}(x)|.
\]
Moreover,
\[
1+a\sum_{k=0}^{n-1}q^{k}(-a;q)_{k}=(-a;q)_{n}.
\]

From the estimate (\ref{eq:Hn_estim}) one infers that $\{H_{n}(x)\}$
is locally uniformly bounded on $\mathbb{C}$. Consequently, from
(\ref{eq:Hn_recurr}) it is seen that the RHS converges as $n\to\infty$
and so $H_{n}(x)\to\Phi(x)$ pointwise. This leads to identity (\ref{eq:Phi}).
Furthermore, one can rewrite (\ref{eq:2nd_recurr_Phat}) as follows
\begin{eqnarray*}
\hat{P}_{n}(x) & = & Q_{n}^{(1)}-x\,\sum_{k=0}^{n}\frac{q^{(1-\nu)n/2}q^{(1+\nu)k/2}-q^{(1+\nu)n/2}q^{(1-\nu)k/2}}{1-q^{\nu}}\,\hat{P}_{k}(x)\\
 & = & Q_{n}^{(1)}+xQ_{n}^{(2)}\,\sum_{k=0}^{n}Q_{k}^{(1)}\hat{P}_{k}(x)-xQ_{n}^{(1)}\,\sum_{k=0}^{n}Q_{k}^{(2)}\hat{P}_{k}(x)\\
 & = & \left(1-x\,\sum_{k=0}^{\infty}Q_{k}^{(2)}\hat{P}_{k}(x)\right)\! Q_{n}^{(1)}+xQ_{n}^{(2)}\sum_{k=0}^{n}Q_{k}^{(1)}\hat{P}_{k}(x)+xQ_{n}^{(1)}\sum_{k=n+1}^{\infty}Q_{k}^{(2)}\hat{P}_{k}(x)
\end{eqnarray*}
Taking into account (\ref{eq:Phi}) one arrives at (\ref{eq:Phat_Phi}).

Finally, from the locally uniform boundedness and Montel's theorem
it follows that the convergence of $\{H_{n}(x)\}$ is even locally
uniform and so $\Phi(x)$ is an entire function. \end{proof}

It turns out that $\Phi(x)$ may be called the characteristic function
of the Jacobi operator $T$, if $\nu\geq1$, or the Friedrichs extension
$T^{\text{F}}$, if $0<\nu<1$.

\begin{lemma} \label{prop:Gtilde_f} Assume that $\nu\geq1$. Suppose
further that $f\in\mathbb{C}^{\infty}$, $\{q^{-\sigma_{0}\, n}f_{n}\}$
is bounded for some $\sigma_{0}>-(\nu+1)/2$, and $f=x\tilde{G}f$
for some $x\in\mathbb{R}$ where
\begin{equation}
(\tilde{Gf})_{n}:=Q_{n}^{(2)}\,\sum_{k=0}^{n}Q_{k}^{(1)}f_{k}+Q_{n}^{(1)}\sum_{k=n+1}^{\infty}Q_{k}^{(2)}f_{k},\ n\in\mathbb{Z}_{+}.\label{eq:Gtilde_f}
\end{equation}
Then the sequence $\{q^{-\sigma n}f_{n}\}$ is bounded for every $\sigma<(\nu+1)/2$.
In particular, $f\in\ell^{2}$. \end{lemma}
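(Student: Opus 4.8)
The plan is to prove the statement by a bootstrap argument on the exponential decay rate of $f$, using the identity $f=x\tilde{G}f$ as a self-improving estimate. Recall from (\ref{eq:sols_Q1_Q2}) that $Q_n^{(2)}=q^{(1+\nu)n/2}$ and that, since $\nu>0$,
\[
0\le Q_n^{(1)}\le\frac{1+q^{\nu}}{1-q^{\nu}}\,q^{(1-\nu)n/2},\qquad n\in\mathbb{Z}_+ .
\]
Set $\Sigma=\{\sigma\in\mathbb{R}\,;\,\{q^{-\sigma n}f_n\}\ \text{is bounded}\}$. If $\sigma\in\Sigma$ and $\sigma'<\sigma$ then $|q^{-\sigma'n}f_n|=q^{(\sigma-\sigma')n}|q^{-\sigma n}f_n|$ is bounded, so $\Sigma$ is an interval unbounded from below; put $\sigma^{*}=\sup\Sigma$, which by the hypothesis $\sigma_0\in\Sigma$ satisfies $\sigma^{*}>-(1+\nu)/2$. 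The whole lemma then reduces to proving $\sigma^{*}\ge(1+\nu)/2$: this gives boundedness of $\{q^{-\sigma n}f_n\}$ for every $\sigma<(1+\nu)/2$, and taking any $\sigma\in(0,(1+\nu)/2)$ yields $f\in\ell^{2}$.

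The engine is the following one-step estimate. Suppose $|f_n|\le Cq^{\sigma n}$ for all $n$, where $\sigma>-(1+\nu)/2$ --- this last inequality being exactly what makes the tail series $\sum_{k>n}Q_k^{(2)}f_k$ in (\ref{eq:Gtilde_f}) convergent. Inserting the bound into (\ref{eq:Gtilde_f}) one controls the first contribution by $q^{(1+\nu)n/2}\sum_{k=0}^{n}q^{((1-\nu)/2+\sigma)k}$ and the second by $q^{(1-\nu)n/2}\sum_{k=n+1}^{\infty}q^{((1+\nu)/2+\sigma)k}$. Summing these geometric-type series yields a dichotomy. If moreover $\sigma<(\nu-1)/2$, then both contributions are $O(q^{(1+\sigma)n})$, so $|f_n|=|x|\,|(\tilde{G}f)_n|\le C'q^{(1+\sigma)n}$ and the decay rate has improved by $1$. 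If instead $\sigma\ge(\nu-1)/2$, then the first sum is bounded (or grows only like $n$ in the borderline case $\sigma=(\nu-1)/2$) while the second is $O(q^{(1+\sigma)n})$ and a fortiori $O(q^{(1+\nu)n/2})$; hence $|f_n|\le C'(n+1)q^{(1+\nu)n/2}$, which already forces every $\sigma'<(1+\nu)/2$ into $\Sigma$.

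With this dichotomy the conclusion follows quickly. Assume first $\sigma^{*}\le(\nu-1)/2$. Then each $\sigma\in(-(1+\nu)/2,\sigma^{*})$ satisfies $\sigma<(\nu-1)/2$, so the first alternative applies and gives $1+\sigma\in\Sigma$, that is $\sigma^{*}\ge1+\sigma$; letting $\sigma\uparrow\sigma^{*}$ produces the absurdity $\sigma^{*}\ge1+\sigma^{*}$. Therefore $\sigma^{*}>(\nu-1)/2$. Because $\nu>0$ gives $(\nu-1)/2>-(1+\nu)/2$, one may choose $\sigma$ with $(\nu-1)/2<\sigma<\sigma^{*}$, and the second alternative then shows $(1+\nu)/2\in\Sigma$, i.e. $\sigma^{*}\ge(1+\nu)/2$. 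This finishes the argument modulo the one-step estimate.

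The part that will require care is precisely that one-step estimate: one has to split according to the sign of the exponent $(1-\nu)/2+\sigma$ (negative, zero, or positive --- the zero case being responsible for the innocuous factor $n+1$), evaluate the resulting geometric sums in closed form, and make sure all the constants produced are genuinely independent of $n$. The rest is routine manipulation of (\ref{eq:Gtilde_f}) and (\ref{eq:sols_Q1_Q2}) together with the elementary bookkeeping of the set $\Sigma$.
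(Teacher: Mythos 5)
Your proof is correct and follows essentially the same route as the paper's: the same set of admissible decay rates $\Sigma$ with its supremum, and the same one-step bootstrap obtained by inserting $|f_n|\le Cq^{\sigma n}$ into $f=x\tilde{G}f$, with the threshold $(\nu-1)/2$ playing the identical role. The only (minor) organizational difference is the endgame: the paper steps down from a $\sigma\in(\sigma_*,(\nu+1)/2)$ and re-applies the one-unit improvement to get a contradiction, whereas you exploit the second branch of your dichotomy, $\sigma\ge(\nu-1)/2\Rightarrow|f_n|\le C(n+1)q^{(1+\nu)n/2}$, to reach the rate $(1+\nu)/2$ directly.
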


\begin{proof} Put
\[
S=\{\sigma>-(\nu+1)/2;\,\{q^{-\sigma n}f_{n}\}\in\ell^{\infty}\},\ \sigma_{\ast}=\sup S.
\]
Notice that, by the assumptions, $S\neq\emptyset$ and the definition
of $\tilde{G}f$ makes good sense. We have to show that $\sigma_{\ast}\geq(1+\nu)/2$.
Let us assume the contrary.

We claim that if $\sigma\in S$ and $\sigma<(\nu-1)/2$ then $\sigma+1\in S$.
In particular, $\sigma_{\ast}\geq(\nu-1)/2$. In fact, write $f_{n}=q^{\sigma n}h_{n}$,
$h\in\ell^{\infty}$. From (\ref{eq:Gtilde_f}) one derives the estimate
\[
|(\tilde{G}f)_{n}|\leq\frac{\|h\|_{\infty}}{1-q^{\nu}}\!\left(q^{(\nu+1)n/2}\sum_{k=0}^{n-1}q^{(\sigma+(1-\nu)/2)k}+\frac{q^{(\sigma+1)n}}{1-q^{\sigma+(1+\nu)/2}}\right)\!.
\]
From here one deduces that there exists a constant $C\geq0$ such
that
\[
\forall n,\ |f_{n}|=|x(\tilde{G}f)_{n}|\leq Cq^{(\sigma+1)n},
\]
as claimed.

Choose $\sigma$ such that $\sigma_{\ast}<\sigma<(\nu+1)/2$. Then
\[
-\frac{\nu+1}{2}\leq\frac{\nu-1}{2}-1\leq\sigma_{\ast}-1<\sigma-1<\frac{\nu-1}{2}\leq\sigma_{\ast}
\]
and so $\sigma-1\in S$. But in that case $\sigma\in S$ as well,
a contradiction. \end{proof}

\begin{proposition} \label{prop:specT_Phi} If $\nu\geq1$, the spectrum
of $T$ coincides with the zero set of $\Phi(x)$. If $0<\nu<1$ then
$\spec T(\kappa)$, $\kappa\in P^{1}(\mathbb{R})$, consists of the
roots of the characteristic equation
\[
\kappa\Phi(x)+\Psi(x)=0
\]
where
\begin{equation}
\Psi(x)\equiv\Psi^{(\nu)}(x;q)=\frac{1}{1-q^{\nu}}\!\left(q^{\nu}-x\sum_{k=0}^{\infty}q^{(1-\nu)k/2}\hat{P}_{k}(x)\right)\!.\label{eq:Psi}
\end{equation}
In particular, the spectrum of $T^{\text{F}}=T(\infty)$ equals the
zero set of $\Phi(x)$. \end{proposition}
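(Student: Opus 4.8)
The plan is to identify, for each $x\in\mathbb{R}$ and each admissible boundary parameter, the eigenvector of the relevant self-adjoint extension, using the fact that every formal eigenvector of $\mathcal{T}$ at the value $x$ is a scalar multiple of $\{\hat P_n(x)\}$ (Akhiezer). So $x$ is an eigenvalue of a given extension if and only if $\{\hat P_n(x)\}\in\ell^2$ and $\{\hat P_n(x)\}$ lies in the domain of that extension. The expansion~(\ref{eq:Phat_Phi}) from Proposition~\ref{prop:Phi} is tailor-made for this: it writes $\hat P_n(x)$ as a combination of $Q_n^{(1)}$ and $Q_n^{(2)}$ plus remainder terms, with coefficients involving $\Phi(x)$ and the tail sums $\sum_k Q_k^{(1)}\hat P_k(x)$, $\sum_k Q_k^{(2)}\hat P_k(x)$. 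One should recognize the bracketed combination $\bigl(1-x\sum_{k=0}^\infty Q_k^{(2)}\hat P_k(x)\bigr)$ as $(1-q^\nu)\Phi(x)$ via~(\ref{eq:Phi}), and correspondingly a dual combination with the $Q^{(1)}$-weights will produce $\Psi(x)$ as defined in~(\ref{eq:Psi}); indeed $\Psi$ is exactly the analogue of~(\ref{eq:Phi}) with the roles of the two basic solutions swapped.

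\textbf{The case $\nu\ge1$.} Here $T_{\min}=T_{\max}=T^{\text F}$ is already self-adjoint, so I only need: $x$ is an eigenvalue of $T$ iff $\{\hat P_n(x)\}\in\ell^2$. By Proposition~\ref{prop:G_eq_Tinv}, $G=T^{-1}$, so $x\ne0$ is an eigenvalue iff $G\{\hat P_n(x)\}=x^{-1}\{\hat P_n(x)\}$. Writing out $(G\hat P(x))_n$ using~(\ref{eq:Green_matrix}) gives precisely the operator $\tilde G$ of~(\ref{eq:Gtilde_f}) applied to $\hat P(x)$, and identity~(\ref{eq:Phat_Phi}) shows that $\hat P_n(x)=x(\tilde G\hat P(x))_n+(1-q^\nu)\Phi(x)Q_n^{(1)}$. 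Hence the eigenvalue equation $\hat P(x)=x\tilde G\hat P(x)$ holds iff $\Phi(x)=0$ (using that $Q^{(1)}\notin\ell^2$ while $\tilde G\hat P(x)$, once we know $\hat P(x)\in\ell^2$, is in $\ell^2$). The one subtle point is that a priori we do not know $\{\hat P_n(x)\}\in\ell^2$; this is where Lemma~\ref{prop:Gtilde_f} enters. Its hypotheses are met: from~(\ref{eq:Hn_estim}) the sequence $q^{(\nu-1)n/2}\hat P_n(x)$ is bounded, so $\sigma_0=(\nu-1)/2>-(\nu+1)/2$ works, and if $\Phi(x)=0$ then~(\ref{eq:Phat_Phi}) gives exactly $\hat P(x)=x\tilde G\hat P(x)$; the lemma then upgrades this to $\hat P(x)\in\ell^2$. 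Conversely, if $\Phi(x)\ne0$, the term $(1-q^\nu)\Phi(x)Q_n^{(1)}$ grows like $q^{(1-\nu)n/2}$ and cannot be cancelled (for $\nu\ge1$ this is non-decaying, and $\tilde G\hat P(x)$ would have to be in $\ell^2$ for $\hat P(x)$ to be an eigenvector), so $x$ is not an eigenvalue. The value $x=0$ is handled by noting $\Phi(0)=1/(1-q^\nu)\ne0$ and $0\notin\spec T$ since $T$ is positive definite.

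\textbf{The case $0<\nu<1$.} Now $\{\hat P_n(x)\}\in\ell^2$ automatically (deficiency indices $(1,1)$), so the only question is membership in $\Dom T(\kappa)$, which by Proposition~\ref{prop:sa_extns_Tmin} is governed by the asymptotic constants: $x$ is an eigenvalue of $T(\kappa)$ iff $C_2(\hat P(x))=\kappa\,C_1(\hat P(x))$. From the expansion~(\ref{eq:Phat_asympt})-type analysis applied to~(\ref{eq:Phat_Phi}): the remainder terms $xQ_n^{(2)}\sum_{k=0}^n Q_k^{(1)}\hat P_k(x)+xQ_n^{(1)}\sum_{k=n+1}^\infty Q_k^{(2)}\hat P_k(x)$ contribute the limits $x\sum_{k=0}^\infty Q_k^{(1)}\hat P_k(x)$ to the $Q^{(2)}$-coefficient and $0$ beyond to the $Q^{(1)}$-coefficient (the second sum is a tail, hence $o(1)$, and multiplies $Q_n^{(1)}$), so that $C_1(\hat P(x))=(1-q^\nu)\Phi(x)$ while $C_2(\hat P(x))=x\sum_{k=0}^\infty Q_k^{(1)}\hat P_k(x)$, which by the definition~(\ref{eq:Psi}) of $\Psi$ equals $-(1-q^\nu)\Psi(x)$ after using $Q_k^{(1)}=(g_k^{(1)}-q^\nu g_k^{(2)})/(1-q^\nu)$ and~(\ref{eq:Phi}). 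Thus $C_2(\hat P(x))=\kappa C_1(\hat P(x))$ becomes $-(1-q^\nu)\Psi(x)=\kappa(1-q^\nu)\Phi(x)$, i.e. $\kappa\Phi(x)+\Psi(x)=0$; for $\kappa=\infty$ this reads $\Phi(x)=0$, matching $T(\infty)=T^{\text F}$.

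\textbf{Main obstacle.} The delicate step is the $\nu\ge1$ argument that $\Phi(x)=0$ forces $\{\hat P_n(x)\}\in\ell^2$, since square-summability is not free in the determinate case; Lemma~\ref{prop:Gtilde_f} is designed precisely to surmount this, so the real work is checking its hypotheses hold for $f=\hat P(x)$ and correctly extracting the $\ell^2$ conclusion. A secondary bookkeeping task is the identification of $C_2(\hat P(x))$ with a multiple of $\Psi(x)$: one must carefully track which tail sums survive in the limit and rewrite them using~(\ref{eq:Phi}) and the linear relation between $Q^{(1)},Q^{(2)}$ and $g^{(1)},g^{(2)}$. I would also, for completeness, remark that since $\Phi$ and $\Psi$ are entire (Proposition~\ref{prop:Phi} and the analogous statement for $\Psi$) and the spectrum is discrete (Proposition~\ref{prop:T_pp_semibound}), "coincides with the zero set" is an honest statement about a locally finite set of zeros.
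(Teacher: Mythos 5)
Your strategy is the paper's own: for $\nu\geq1$ reduce to square-summability of the formal eigenvector $\hat{P}(x)$ and use (\ref{eq:Phat_Phi}) together with Lemma~\ref{prop:Gtilde_f}; for $0<\nu<1$ read the boundary condition of Proposition~\ref{prop:sa_extns_Tmin} off the asymptotics of $\hat{P}_n(x)$ obtained from (\ref{eq:Phat_Phi}). The determinate half is essentially fine; your converse argument ($Q^{(1)}\notin\ell^{2}$ while $x\tilde{G}\hat{P}(x)\in\ell^{2}$) is a valid, slightly different route than the paper's shorter observation that $\hat{P}(x)\in\ell^{2}$ forces $q^{(\nu-1)n/2}\hat{P}_n(x)\to0$, i.e.\ $\Phi(x)=0$. (Minor slip: the bound (\ref{eq:Hn_estim}) gives boundedness of $q^{-\sigma_0 n}\hat{P}_n(x)$ with $\sigma_0=(1-\nu)/2$, not $(\nu-1)/2$; it still satisfies $\sigma_0>-(\nu+1)/2$, so the lemma applies.)

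The indeterminate case, as written, contains two compensating errors. The constants $C_1(f),C_2(f)$ entering (\ref{eq:Dom_Tkappa}) are by definition the coefficients of $q^{(1-\nu)n/2}$ and $q^{(1+\nu)n/2}$, not of $Q^{(1)}_n$ and $Q^{(2)}_n$. Since $(1-q^{\nu})Q^{(1)}_n=q^{(1-\nu)n/2}-q^{\nu}q^{(1+\nu)n/2}$, the term $(1-q^{\nu})\Phi(x)Q^{(1)}_n$ in (\ref{eq:Phat_Phi}) also contributes $-q^{\nu}\Phi(x)$ to $C_2$; the correct values are $C_1(\hat{P}(x))=\Phi(x)$ and $C_2(\hat{P}(x))=x\sum_{k}Q^{(1)}_k\hat{P}_k(x)-q^{\nu}\Phi(x)=-\Psi(x)$, because (\ref{eq:Phi}) and (\ref{eq:Psi}) give $x\sum_{k}Q^{(1)}_k\hat{P}_k(x)=q^{\nu}\Phi(x)-\Psi(x)$, not $-(1-q^{\nu})\Psi(x)$ as you assert. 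With your identifications $C_1=(1-q^{\nu})\Phi$, $C_2=x\sum_k Q^{(1)}_k\hat{P}_k$, the condition $C_2=\kappa C_1$ would actually read $q^{\nu}\Phi(x)-\Psi(x)=\kappa(1-q^{\nu})\Phi(x)$, a reparametrized family; you land on $\kappa\Phi+\Psi=0$ only because the false evaluation of the sum cancels the misidentification. A second, smaller point: to extract $C_2$ you must show the remainder in the expansion is $o(q^{(1+\nu)n/2})$ (the paper gets $o(q^{n})$); saying the tail $\sum_{k>n}Q^{(2)}_k\hat{P}_k(x)$ is $o(1)$ and multiplies $Q^{(1)}_n$ only bounds it by $o(q^{(1-\nu)n/2})$, which is not small enough to leave the $q^{(1+\nu)n/2}$ coefficient intact. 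Both defects are repairable by exactly the careful bookkeeping you flag at the end, but as written the $0<\nu<1$ argument does not establish the stated characteristic equation.
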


\begin{proof} From Proposition~\ref{prop:T_pp_semibound} we already
know that the spectrum of $T$ (or $T(\kappa)$) is pure point and
with no finite accumulation points. Assume first that $\nu\geq1$.
According to Proposition~\ref{prop:sa_extns_Tmin}, we are dealing
with the determinate case and so $x$ is an eigenvalue of $T$ if
and only if the formal eigenvector $\hat{P}(x)=\{\hat{P}_{n}(x)\}$
is square summable. If $\hat{P}(x)\in\ell^{2}$ then $q^{(\nu-1)n/2}\hat{P}_{n}(x)\to0$
as $n\to\infty$ and so $\Phi(x)=0$ (see Proposition~\ref{prop:Phi}).
Conversely, if $\Phi(x)=0$ then (\ref{eq:Phat_Phi}) tells us that
$\hat{P}=x\tilde{G}\hat{P}$, cf. (\ref{eq:Gtilde_f}). By Lemma~\ref{prop:Gtilde_f},
$\hat{P}(x)\in\ell^{2}$.

Assume now that $0<\nu<1$. This the indeterminate case meaning that
$\hat{P}(x)$ is square summable for all $x\in\mathbb{C}$. Hence
$x$ is an eigenvalue of $T(\kappa)$ iff $\hat{P}(x)\in\Dom T(\kappa)$.
Recall that $T(\kappa)$ is defined in Proposition~\ref{prop:sa_extns_Tmin}.
From (\ref{eq:Phat_Phi}) one derives the asymptotic expansion
\[
\hat{P}_{n}(x)=\Phi(x)(q^{(1-\nu)n/2}-q^{\nu+(1+\nu)n/2})+xq^{(1+\nu)n/2}\sum_{k=0}^{\infty}Q_{k}^{(1)}\hat{P}_{k}(x)+o(q^{n})\,\ \text{as}\ n\to\infty.
\]
From here it is seen that $\hat{P}(x)$ fulfills the boundary condition
in (\ref{eq:Dom_Tkappa}) if and only if $x$ solves the equation
\[
(\kappa+q^{\nu})\Phi(x)-x\langle Q^{(1)},\hat{P}(x)\rangle=0.
\]
Referring to (\ref{eq:sols_Q1_Q2}) one finds that $x\langle Q^{(1)},\hat{P}(x)\rangle=q^{\nu}\Phi(x)-\Psi(x)$.
\end{proof}

\begin{proposition} \label{prop:char_fceT} For $\nu>0$ one has
\[
\Phi(x)=\frac{1}{1-q^{\nu}}\,\,_{1}\phi_{1}(0;q^{\nu+1};q,x)=\frac{(q;q)_{\infty}}{(q^{\nu};q)_{\infty}}\, q^{\nu/2}x^{-\nu/2}J_{\nu}(q^{-1/2}\sqrt{x};q),
\]
and for $0<\nu<1$,
\begin{equation}
\Psi(x)=\frac{q^{\nu}}{1-q^{\nu}}\,\,_{1}\phi_{1}(0;q^{1-\nu};q,q^{-\nu}x)=-\frac{(q;q)_{\infty}}{(q^{-\nu};q)_{\infty}}\, q^{-\nu(\nu+1)/2}x^{\nu/2}J_{-\nu}(q^{-(\nu+1)/2}\sqrt{x};q).\label{eq:Psi-qBessel}
\end{equation}

If $\Phi(x)=0$ and so $x$ is an eigenvalue of $T$, provided $\nu>0$,
or $T^{\text{F}}$, provided $0<\nu<1$, then $x>0$ and the components
of a corresponding eigenvector can be chosen as
\begin{equation}
u_{k}(x)=q^{k/2}\, J_{\nu}(q^{k/2}\sqrt{x};q)=C\, q^{(1+\nu)k/2}\,_{1}\phi_{1}(0;q^{\nu+1};q,q^{k+1}x),\ k\in\mathbb{Z}_{+},\label{eq:egvector_TF}
\end{equation}
where $C=x^{\nu/2}\,(q^{1+\nu};q)_{\infty}/(q;q)_{\infty}$.\smallskip{}

If $0<\nu<1$, $\kappa\in\mathbb{R}$ and $\kappa\Phi(x)+\Psi(x)=0$
and so $x$ is an eigenvalue of $T(\kappa)$ then the components of
a corresponding eigenvector can be chosen as
\begin{eqnarray*}
u_{k}(\kappa,x) & = & q^{k/2}\!\left(\kappa J_{\nu}(q^{k/2}\sqrt{x};q)-\frac{(q^{\nu};q)_{\infty}}{(q^{-\nu};q)_{\infty}}\, q^{-\nu(\nu+2)/2}x^{\nu}J_{-\nu}(q^{(k-\nu)/2}\sqrt{x};q)\right)\\
\noalign{\smallskip} & = & C\left(\kappa q^{(1+\nu)k/2}\,_{1}\phi_{1}(0;q^{\nu+1};q,q^{k+1}x)+q^{(1-\nu)k/2}\,_{1}\phi_{1}(0;q^{1-\nu};q,q^{k+1-\nu}x)\right)\!,
\end{eqnarray*}
with $k\in\mathbb{Z}_{+}$($C$ is the same as above). \end{proposition}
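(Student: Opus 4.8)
The plan is to identify $\Phi$ and $\Psi$ with the explicitly given basic hypergeometric series, pass to the $q$-Bessel normalisation via \eqref{eq:def_q-Bessel}, and then verify that the proposed eigenvector sequences solve the formal eigenvalue recurrence \eqref{eq:recurr_orig} together with the boundary condition determining the relevant self-adjoint extension. First I would expand $\hat P_k(x)$ (equivalently $H_k(x)=q^{(\nu-1)k/2}\hat P_k(x)$) as a power series in $x$, substitute into the series $\sum_k q^{(1+\nu)k/2}\hat P_k(x)$ appearing in \eqref{eq:Phi}, and collect powers of $x$. Because $\Phi$ is entire (Proposition~\ref{prop:Phi}) the interchange of summations is legitimate, and the resulting coefficients should match those of $(1-q^\nu)^{-1}\,{}_1\phi_1(0;q^{\nu+1};q,x)$; this is a routine but slightly tedious $q$-series identification. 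The second equality for $\Phi$ is then just \eqref{eq:def_q-Bessel} with $z$ replaced by $q^{-1/2}\sqrt x$: one reads off $J_\nu(q^{-1/2}\sqrt x;q)=\frac{(q^{\nu+1};q)_\infty}{(q;q)_\infty}q^{-\nu/2}x^{\nu/2}\,{}_1\phi_1(0;q^{\nu+1};q,x)$ and solves for ${}_1\phi_1$. The formula for $\Psi$ in \eqref{eq:Psi-qBessel} is obtained in exactly the same way from \eqref{eq:Psi}, now with the series $\sum_k q^{(1-\nu)k/2}\hat P_k(x)$, noticing that replacing $\nu$ by $-\nu$ in the ${}_1\phi_1$ for $\Phi$ and rescaling $x\mapsto q^{-\nu}x$ produces precisely $\Psi$; the sign and the argument $q^{-(\nu+1)/2}\sqrt x$ in the $J_{-\nu}$ expression come out of matching $q$-Pochhammer prefactors.

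For the eigenvector, the key observation is that the Hahn-Exton difference equation \eqref{eq:Hahn-Exton_diffeq}, written for the function $z\mapsto J_\nu(z;q)$ and evaluated at $z=q^{k/2}\sqrt x$, becomes
\[
J_\nu(q^{(k+1)/2}\sqrt x;q)+q^{-\nu/2}(q^{k+1}x-1-q^\nu)J_\nu(q^{k/2}\sqrt x;q)+J_\nu(q^{(k-1)/2}\sqrt x;q)=0.
\]
Setting $v_k=J_\nu(q^{k/2}\sqrt x;q)$ and then $u_k=q^{k/2}v_k$ as in \eqref{eq:egvector_TF}, a direct substitution shows that $\{u_k\}$ satisfies $\alpha_k u_{k+1}+(\beta_k-x)u_k+\alpha_{k-1}u_{k-1}=0$ for $k\ge1$ with $\alpha_k,\beta_k$ as in \eqref{eq:alpha_beta}; one checks separately that the $k=0$ relation $(\beta_0-x)u_0+\alpha_0 u_1=0$ follows from the $q$-Bessel equation at $z=\sqrt x$ combined with the identity $J_\nu(q^{-1/2}\sqrt x;q)=0$, i.e. $\Phi(x)=0$, which kills the "boundary" term. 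Hence $\{u_k\}$ is proportional to the formal eigenvector $\{\hat P_k(x)\}$; the explicit constant $C=x^{\nu/2}(q^{1+\nu};q)_\infty/(q;q)_\infty$ is read off by comparing the $k=0$ entries using \eqref{eq:def_q-Bessel}. Positivity of the eigenvalue, $x>0$, follows from Proposition~\ref{prop:T_pp_semibound}: $T$ (resp.\ $T^{\text F}$) is positive definite, so every eigenvalue is strictly positive; the power-series form of $u_k$ in terms of ${}_1\phi_1$ is just \eqref{eq:def_q-Bessel} again.

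For the last statement, with $0<\nu<1$, $\kappa\in\mathbb R$ and $\kappa\Phi(x)+\Psi(x)=0$, the natural candidate eigenvector is the linear combination of the two Frobenius-type solutions of the recurrence, namely $\kappa$ times the $J_\nu$-solution above plus a suitable multiple of the second solution built from $J_{-\nu}$. Concretely, by the same substitution argument both $q^{k/2}J_\nu(q^{k/2}\sqrt x;q)$ and $q^{k/2}J_{-\nu}(q^{(k-\nu)/2}\sqrt x;q)$ satisfy the recurrence \eqref{eq:recurr_orig} for $k\ge1$ (the $q$-Bessel equation \eqref{eq:Hahn-Exton_diffeq} is symmetric under $\nu\mapsto-\nu$ up to the rescaling of $z$, which is absorbed into the shift $k\mapsto k-\nu$), so any linear combination does too. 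One then checks the $k=0$ equation: it holds precisely when the combination of "boundary" terms vanishes, and using the identifications of $\Phi$ and $\Psi$ with $J_\nu(q^{-1/2}\sqrt x;q)$ and $J_{-\nu}(q^{-(\nu+1)/2}\sqrt x;q)$ this is exactly the condition $\kappa\Phi(x)+\Psi(x)=0$. Finally, one must confirm that this $\{u_k(\kappa,x)\}$ lies in $\Dom T(\kappa)$, i.e.\ satisfies $C_2=\kappa C_1$ in \eqref{eq:Dom_Tkappa}: this is done by extracting the constants $C_1,C_2$ from the asymptotics of the two ${}_1\phi_1$-series (each ${}_1\phi_1(0;\cdot;q,q^{k+1}\cdot)\to1$ as $k\to\infty$, so the $q^{(1\pm\nu)k/2}$ prefactors give the leading terms directly) and comparing with the ratio forced by $\kappa\Phi+\Psi=0$; the second ${}_1\phi_1$-form of $u_k(\kappa,x)$ is then again immediate from \eqref{eq:def_q-Bessel}. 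The main obstacle I anticipate is purely bookkeeping: keeping the $q$-Pochhammer prefactors, the powers of $x$, and the sign in the $J_{-\nu}$ term consistent across the translations $z\mapsto q^{-1/2}\sqrt x$, $z\mapsto q^{-(\nu+1)/2}\sqrt x$ and $k\mapsto k-\nu$ — once those are pinned down correctly everything reduces to the single difference equation \eqref{eq:Hahn-Exton_diffeq} and the series definition \eqref{eq:def_q-Bessel}.
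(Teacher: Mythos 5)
Your handling of the eigenvectors is essentially the paper's argument: one checks against \eqref{eq:Hahn-Exton_diffeq} that both $q^{k/2}J_{\nu}(q^{k/2}\sqrt{x};q)$ and $q^{k/2}J_{-\nu}(q^{(k-\nu)/2}\sqrt{x};q)$ solve \eqref{eq:diff_eq} on all of $\mathbb{Z}$, and the $k=0$ equation holds precisely because the $(-1)$st entry, proportional to $\Phi(x)$ resp.\ $\kappa\Phi(x)+\Psi(x)$, vanishes; uniqueness of formal eigenvectors then gives proportionality to $\hat{P}(x)$, and $x>0$ follows from positive definiteness (Proposition~\ref{prop:T_pp_semibound}). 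Your additional verification that $C_{2}=\kappa C_{1}$ via the large-$k$ asymptotics of the two ${}_{1}\phi_{1}$ factors is correct (the error terms are $o(q^{(1+\nu)k/2})$ since $\nu<1$), but it is redundant: Proposition~\ref{prop:specT_Phi} already tells you that for such $x$ the formal eigenvector $\hat{P}(x)$ lies in $\Dom T(\kappa)$, which is how the paper closes this step.

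The genuine gap is in the first half, the identification of $\Phi$ and $\Psi$ with the ${}_{1}\phi_{1}$ series. You reduce it to ``expand $\hat{P}_{k}(x)$ in powers of $x$, insert into \eqref{eq:Phi} and \eqref{eq:Psi}, and match coefficients'', calling this routine. It is not: the Taylor coefficients $d^{m}\hat{P}_{k}(0)/dx^{m}$ are not explicitly available (the known closed form for $\hat{P}_{n}$ involves a ${}_{2}\phi_{1}$ and is itself nontrivial), so the coefficient matching is exactly the point requiring proof. The paper supplies the missing idea as a separate lemma: differentiating the recurrence \eqref{eq:eigeval_recurr_Phat} $m$ times at $x=0$ gives the inhomogeneous relation \eqref{eq:diff_Phat0_recurr}, and summing it over $n$ with weight $q^{(\sigma+(\nu-1)/2)n}$ (the boundedness needed to justify this comes from Proposition~\ref{prop:Phi} together with Cauchy's integral formula) yields a first-order recursion in $m$ for $S_{m,\sigma}=\sum_{k}q^{(\sigma+(\nu-1)/2)k}\,d^{m}\hat{P}_{k}(0)/dx^{m}$, hence the closed form \eqref{eq:sum_diff_Phat0}; the choices $\sigma=1$ and $\sigma=1-\nu$ then produce the Taylor coefficients of $\Phi$ and $\Psi$. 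Note also that your shortcut for $\Psi$ --- ``replace $\nu$ by $-\nu$ in the ${}_{1}\phi_{1}$ for $\Phi$ and rescale $x\mapsto q^{-\nu}x$'' --- is a description of the answer, not a derivation: the sum in \eqref{eq:Psi} still involves the $\nu$-dependent polynomials $\hat{P}_{k}^{(\nu)}$, so it must be evaluated on its own, which is what the case $\sigma=1-\nu$ of the lemma accomplishes. Without this lemma (or an equivalent explicit evaluation), the first display of the proposition is asserted rather than proved.
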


\begin{lemma} For every $m\in\mathbb{Z}_{+}$ and $\sigma>0$,

\begin{equation}
\sum_{k=0}^{\infty}q^{(\sigma+(\nu-1)/2)k}\,\frac{d^{m}\hat{P}_{k}(0)}{dx^{m}}=\frac{(-1)^{m}\, m!\, q^{m\sigma+m(m-1)/2}}{(q^{\sigma};q){}_{m+1}\,(q^{\sigma+\nu};q){}_{m+1}}\,.\label{eq:sum_diff_Phat0}
\end{equation}
\end{lemma}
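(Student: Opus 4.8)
The plan is to read the left-hand side of (\ref{eq:sum_diff_Phat0}) as the value $\Theta_\sigma^{(m)}(0)$ of the $m$-th derivative at the origin of the auxiliary function
\[
\Theta_\sigma(x):=\sum_{k=0}^{\infty}q^{(\sigma+(\nu-1)/2)k}\hat{P}_k(x)=\sum_{k=0}^{\infty}q^{\sigma k}H_k(x),\qquad H_k(x):=q^{(\nu-1)k/2}\hat{P}_k(x),
\]
and then to pin down all Taylor coefficients of $\Theta_\sigma$ at $0$ from a functional equation linking $\Theta_\sigma$ to $\Theta_{\sigma+1}$. First I would fix $\sigma>0$ and observe, using the bound $|H_k(x)|\le(-a;q)_\infty/(1-q^\nu)$ with $a=|x|/(1-q^\nu)$ which follows from (\ref{eq:Hn_estim}) in the proof of Proposition~\ref{prop:Phi}, that $\sum_k q^{\sigma k}|H_k(x)|$ is dominated by a convergent geometric series, uniformly on compact subsets of $\mathbb C$. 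Consequently $\Theta_\sigma$ is entire, differentiation under the summation sign is legitimate, and since the factor $q^{\sigma k}$ is $x$-independent the left-hand side of (\ref{eq:sum_diff_Phat0}) is indeed $\Theta_\sigma^{(m)}(0)$.

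The next step is the functional equation. Multiplying the recurrence (\ref{eq:Hn_recurr}) by $q^{\sigma n}$ and summing over $n\ge0$, the first two terms on the right contribute $\sum_{n\ge0}q^{\sigma n}(1-q^{\nu(n+1)})=(1-q^\nu)/\big((1-q^\sigma)(1-q^{\sigma+\nu})\big)$, whereas in the double sum the substitution $j=n-k$ decouples the two summations and produces $\big(\sum_{j\ge1}q^{\sigma j}(1-q^{\nu j})\big)\,\Theta_{\sigma+1}(x)$ with $\sum_{j\ge1}q^{\sigma j}(1-q^{\nu j})=q^\sigma(1-q^\nu)/\big((1-q^\sigma)(1-q^{\sigma+\nu})\big)$. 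Cancelling the common factor $1-q^\nu$ and clearing denominators yields
\[
(1-q^\sigma)(1-q^{\sigma+\nu})\,\Theta_\sigma(x)=1-xq^\sigma\,\Theta_{\sigma+1}(x).
\]

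Finally I would extract the Taylor coefficients. Writing $\Theta_\sigma(x)=\sum_{m\ge0}c_m(\sigma)\,x^m$, the constant term of the functional equation gives $c_0(\sigma)=1/\big((1-q^\sigma)(1-q^{\sigma+\nu})\big)$, which already matches the claim for $m=0$, and comparison of the coefficient of $x^m$ for $m\ge1$ yields the recursion
\[
c_m(\sigma)=\frac{-q^\sigma}{(1-q^\sigma)(1-q^{\sigma+\nu})}\,c_{m-1}(\sigma+1).
\]
Iterating $m$ times, the numerators accumulate to $(-1)^m q^{m\sigma+m(m-1)/2}$ while the two chains of denominators telescope to $(q^\sigma;q)_{m+1}$ and $(q^{\sigma+\nu};q)_{m+1}$; multiplying by $m!$ to pass from $c_m(\sigma)$ to $\Theta_\sigma^{(m)}(0)$ gives precisely the right-hand side of (\ref{eq:sum_diff_Phat0}). (Alternatively one may differentiate the functional equation $m$ times at $x=0$ using $\tfrac{d^m}{dx^m}(xg)=xg^{(m)}+mg^{(m-1)}$, obtaining the same recursion with the combinatorial factor $m$ displayed explicitly.) I expect the only genuinely delicate point to be the justification of interchanging summation, differentiation, and the reindexing of the double sum; all of it rests on the uniform geometric bound noted above, and what remains is the bookkeeping of a telescoping product.
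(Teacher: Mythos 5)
Your argument is correct, and it reaches the result by a route that differs from the paper's in a meaningful way, although both end at the same two-variable recursion. The paper works at the level of derivatives from the start: it differentiates the eigenvalue recurrence (\ref{eq:eigeval_recurr_Phat}) $m$ times at $x=0$ to get the inhomogeneous three-term relation (\ref{eq:diff_Phat0_recurr}), justifies boundedness of $\{q^{(\nu-1)n/2}\,d^{m}\hat{P}_{n}(0)/dx^{m}\}$ via Cauchy's integral formula, sums over $n$ to obtain $S_{m,\sigma}=-\tfrac{mq^{\sigma}}{(1-q^{\sigma})(1-q^{\sigma+\nu})}S_{m-1,\sigma+1}$, and computes the base case $S_{0,\sigma}$ from the explicit formula $\hat{P}_{n}(0)=Q_{n}^{(1)}$ in (\ref{eq:sols_Q1_Q2}). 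You instead sum first: starting from the ``solved'' recurrence (\ref{eq:Hn_recurr}), you build the generating function $\Theta_{\sigma}(x)=\sum_{k}q^{\sigma k}H_{k}(x)$, derive the functional equation $(1-q^{\sigma})(1-q^{\sigma+\nu})\Theta_{\sigma}(x)=1-xq^{\sigma}\Theta_{\sigma+1}(x)$, and read off Taylor coefficients; since $S_{m,\sigma}=\Theta_{\sigma}^{(m)}(0)=m!\,c_{m}(\sigma)$, your coefficient recursion is exactly the paper's recursion in disguise. What your route buys is a cleaner analytic underpinning (the explicit bound (\ref{eq:Hn_estim}) plus Weierstrass replaces the Cauchy-formula boundedness argument, and the same bound justifies the Fubini step in the reindexing $j=n-k$) and a base case that drops out of the constant term of the functional equation rather than requiring the closed form of $Q^{(1)}$; the paper's route stays entirely within elementary manipulations of recurrences and avoids introducing the auxiliary entire functions $\Theta_{\sigma}$. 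All the computations you indicate check out, including the two geometric sums producing the common factor $1-q^{\nu}$ and the telescoping of the iterated recursion to $(q^{\sigma};q)_{m+1}(q^{\sigma+\nu};q)_{m+1}$.
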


\begin{proof} For a given $m\in\mathbb{N}$, one derives from (\ref{eq:eigeval_recurr_Phat})
the three-term inhomogeneous recurrence relation
\begin{equation}
q^{(\nu-1)/2}\,\frac{d^{m}\hat{P}_{n+1}(0)}{dx^{m}}-(1+q^{\nu})\,\frac{d^{m}\hat{P}_{n}(0)}{dx^{m}}+q^{(\nu+1)/2}\,\,\frac{d^{m}\hat{P}_{n-1}(0)}{dx^{m}}=-mq^{n}\,\frac{d^{m-1}\hat{P}_{n}(0)}{dx^{m-1}},\text{ }n\geq0,\label{eq:diff_Phat0_recurr}
\end{equation}
with the initial conditions
\begin{equation}
\frac{d^{m}\hat{P}_{-1}(0)}{dx^{m}}=0,\text{ }\frac{d^{m}\hat{P}_{0}(0)}{dx^{m}}=\delta_{m,0}\ \text{ }\text{for}\ \text{all}\text{ }m\geq0.\label{eq:diff_Phat0_ini}
\end{equation}
Recall that, by Proposition~\ref{prop:Phi}, the sequence $\{q^{(\nu-1)n/2}\hat{P}_{n}(x)\}$
converges on $\mathbb{C}$ locally uniformly and hence it is locally
uniformly bounded. Combining this observation with Cauchy's integral
formula one justifies that, for any $m\in\mathbb{Z}_{+}$ fixed, the
sequence $\{q^{(\nu-1)n/2}d^{m}\hat{P}_{n}(0)/dx^{m}\}$ is bounded
as well. Therefore the LHS of (\ref{eq:sum_diff_Phat0}) is well defined.
Let us call it $S_{m,\sigma}$. Applying summation in $n$ to (\ref{eq:diff_Phat0_recurr})
and bearing in mind (\ref{eq:diff_Phat0_ini}) one derives the recurrence
\[
S_{m,\sigma}=-\frac{mq^{\sigma}}{(1-q^{\sigma})\,(1-q^{\sigma+\nu})}\, S_{m-1,\sigma+1}\ \text{ }\text{for}\text{ }m\geq1,\sigma>0.
\]
Particularly, for $m=0$ we know that $\hat{P}_{n}(0)=Q_{n}^{(1)}$,
$n\in\mathbb{Z}_{+}$. Whence
\[
\forall\sigma>0,\ S_{0,\sigma}=\frac{1}{(1-q^{\sigma})\,(1-q^{\sigma+\nu})}
\]
(cf. (\ref{eq:sols_Q1_Q2})). A routine application of mathematical
induction in $m$ proves (\ref{eq:sum_diff_Phat0}). \end{proof}

\begin{proof}[Proof of Proposition~\ref{prop:char_fceT}] Letting
$\sigma=1$ in (\ref{eq:sum_diff_Phat0}) and making use of the locally
uniform convergence (cf. Proposition~\ref{prop:Phi}) one has
\[
\frac{1}{m!}\frac{d^{m}}{dx^{m}}\sum_{k=0}^{\infty}q^{(\nu+1)k/2}\hat{P}_{k}(x)\Big|_{x=0}=\frac{(-1)^{m}q^{m(m+1)/2}}{(q;q)_{m+1}\,(q^{\nu+1};q){}_{m+1}}\,,\text{ }\forall m\in\mathbb{Z}_{+}.
\]
Now, since $\Phi(x)$ is analytic it suffices to refer to formula
(\ref{eq:Phi}) to obtain
\[
\Phi(x)=\frac{1}{1-q^{\nu}}\,\sum_{n=0}^{\infty}\,\frac{(-1)^{n}q^{(n-1)n/2}\, x^{n}}{(q;q)_{n}\,(q^{\nu+1};q)_{n}}=\frac{1}{1-q^{\nu}}\,\,_{1}\phi_{1}(0;q^{\nu+1};q,x).
\]
Letting $\sigma=1-\nu$ in (\ref{eq:sum_diff_Phat0}), a fully analogous
computation can be carried out to evaluate the RHS of (\ref{eq:Psi})
thus getting formula (\ref{eq:Psi-qBessel}) for $\Psi(x)$.

From (\ref{eq:Hahn-Exton_diffeq}) it is seen that the sequences $\{u_{k}(x);\, k\in\mathbb{Z}\}$
and $\{v_{k}(x);\, k\in\mathbb{Z}\}$, where
\[
u_{k}(x)=q^{k/2}\, J_{\nu}(q^{k/2}\sqrt{x};q)\ \text{and}\ v_{k}(x)=q^{k/2}\, J_{-\nu}(q^{(k-\nu)/2}\sqrt{x};q),
\]
obey both the difference equation
\begin{equation}
\alpha_{k}u_{k+1}+\beta_{k}u_{k}+\alpha_{k-1}u_{k-1}=xu_{k}\label{eq:diff_eq}
\end{equation}
(with $\alpha_{k}$, $\beta_{k}$ being defined in (\ref{eq:alpha_beta})).
In the case of the former sequence, $\nu$ can be arbitrary positive,
and in the case of the latter one we assume that $0<\nu<1$. Hence
the sequence $\left(u_{0}(x),u_{1}(x),u_{2}(x),\ldots\right)$ is
a formal eigenvector of the Jacobi matrix $\mathcal{T}$ if and only
if $u_{-1}(x)=0$. A similar observation holds true if we replace
$u_{k}(x)$ by $u_{k}(\kappa,x)$. In view of Proposition~\ref{prop:char_fceT},
it suffices to notice that $u_{-1}(x)$ is proportional to $\Phi(x)$
and $u_{-1}(\kappa,x)$ to $\kappa\Phi(x)+\Psi(x)$. \end{proof}

\subsection{The case $\nu=0$ \label{subsec:nu_eq_0}}

The case $\nu=0$ is very much the same thing as the case when $0<\nu<1$.
First of all, this is again an indeterminate case, i.e. $T_{\text{min}}$
is not self-adjoint. On the other hand, there are some differences
causing the necessity to modify several formulas, some of them rather
substantially. Perhaps the main reason for this is the fact that the
characteristic polynomial of the difference equation with constant
coefficients, (\ref{eq:const_coeff}), has one double root if $\nu=0$
while it has two different roots if $0<\nu$. Here we summarize the
basic modifications but without going into details since the arguing
remains quite analogous.

For $\nu=0$ one has $\Dom\mathfrak{t}=\{f\in\ell^{2};\,\mathcal{A}f\in\ell^{2}\}$,
and two distinguished solutions of (\ref{eq:three-term_Z}) are
\[
Q_{n}^{(1)}=(n+1)q^{n/2},\ Q_{n}^{(2)}=q^{n/2},\ n\in\mathbb{Z},
\]
where again $Q_{n}^{(1)}=\hat{P}_{n}(0)$ for $n\geq0$ and $\{Q_{n}^{(2)}\}$
is a minimal solution, $W_{n}(Q^{(1)},Q^{(2)})=1$. The asymptotic
expansion of a sequence $f\in\Dom T_{\text{max}}$ reads
\[
f_{n}=\big(C_{1}\,(n+1)+C_{2}\big)q^{n/2}+o(q^{n})\,\ \text{as}\ n\to\infty,
\]
with $C_{1},C_{2}\in\mathbb{C}$. The one-parameter family of self-adjoint
extensions of $T_{\text{min}}$ is again denoted $T(\kappa)$, $\kappa\in P^{1}(\mathbb{R})$.
Definition (\ref{eq:Dom_Tkappa}) of $\Dom T(\kappa)$ formally remains
the same but the constants $C_{1}(f)$, $C_{2}(f)$ in the definition
are now determined by the limits
\[
C_{1}(f)=\lim_{n\to\infty}f_{n}\,(n+1)^{-1}q^{-n/2}\,,\ C_{2}(f)=\lim_{n\to\infty}\big(f_{n}-C_{1}(f)\,(n+1)q^{n/2}\big)q^{-n/2}\,.
\]
One still has $T(\infty)=T^{\text{F}}$. Similarly, $f\in\Dom T_{\text{max}}$
belongs to $\Dom T_{\text{min}}$ if and only if $C_{1}(f)=C_{2}(f)=0$
meaning that (\ref{eq:Dom_Tmin}) is true for $\nu=0$, too. Furthermore,
everything what is claimed in Propositions~\ref{prop:G_eq_Tinv}
and \ref{prop:T_pp_semibound} about the values $0<\nu<1$ is true
for $\nu=0$ as well.

Proposition~\ref{prop:form_Tkappa} should be modified so that the
quadratic form associated with a self-adjoint extension $T(\kappa)$,
$\kappa\in\mathbb{R}$, is $\mathfrak{t}_{\kappa+1}$, i.e. for $\nu=0$
one lets $\tau\equiv\tau(\kappa)=\kappa+1$. On the other hand, Proposition~\ref{prop:xi_n}
holds verbatim true also for $\nu=0$.

Relation (\ref{eq:2nd_recurr_Phat}) is valid for $\nu=0$ as well
but more substantial modifications are needed in Proposition~\ref{prop:Phi}.
One has
\[
\frac{q^{-n/2}}{n+1}\,\hat{P}_{n}(x)\to\Phi(x)=1-x\sum_{k=0}^{\infty}q^{k/2}\hat{P}_{k}(x)\,\ \text{as}\ n\to\infty,
\]
and the convergence is locally uniform on $\mathbb{C}$ for one can
estimate
\[
\left|\frac{q^{-n/2}}{n+1}\,\hat{P}_{n}(x)\right|\leq\prod_{k=0}^{n}\big(1+(k+1)q^{k}|x|\big),\ n\in\mathbb{Z}_{+}.
\]
Equation (\ref{eq:Phat_Phi}) should be replaced by
\[
\hat{P}_{n}(x)=\Phi(x)Q_{n}^{(1)}+xQ_{n}^{(2)}\sum_{k=0}^{n}Q_{k}^{(1)}\hat{P}_{k}(x)+xQ_{n}^{(1)}\sum_{k=n+1}^{\infty}Q_{k}^{(2)}\hat{P}_{k}(x).
\]
From here one infers the asymptotic expansion
\[
\hat{P}_{n}(x)=\Phi(x)(n+1)q^{n/2}+xq^{n/2}\sum_{k=0}^{\infty}Q_{k}^{(1)}\hat{P}_{k}(x)+o(q^{n})\,\ \text{as}\ n\to\infty.
\]
One concludes that what is claimed in Proposition~\ref{prop:specT_Phi}
about the values $0<\nu<1$ is true for $\nu=0$ as well but instead
of (\ref{eq:Psi}) one should write
\[
\Psi(x)=-x\sum_{k=0}^{\infty}(k+1)q^{k/2}\hat{P}_{k}(x).
\]

Finally let us consider modifications needed in Proposition~\ref{prop:char_fceT}.
For $\nu=0$ one has
\[
\Phi(x)=\,_{1}\phi_{1}(0;q;q,x)=J_{0}(q^{-1/2}\sqrt{x};q)
\]
and
\begin{eqnarray*}
\Psi(x) & = & \frac{\partial}{\partial p}\,\,_{2}\phi_{2}(0,q;pq,pq;q,px)\bigg|_{p=1}\\
 & = & 2q\,\frac{\partial}{\partial p}\,\,_{1}\phi_{1}(0;p;q,x)\bigg|_{p=q}+x\,\frac{\partial}{\partial x}\,\,_{1}\phi_{1}(0;q;q,x).
\end{eqnarray*}
Let
\[
u_{k}(x)=q^{(k+1)/2}J_{0}(q^{-k/2}\sqrt{x};q)
\]
and
\begin{eqnarray*}
v_{k}(x) & = & (k+1)q^{(k+1)/2}\,_{1}\phi_{1}(0;q;q,q^{k+1}x)+2q^{(k+3)/2}\,\frac{\partial}{\partial p}\,\,_{1}\phi_{1}(0;p;q,q^{k+1}x)\bigg|_{p=q}\\
 &  & +\, q^{(k+1)/2}x\,\frac{\partial}{\partial x}\,\,_{1}\phi_{1}(0;q;q,q^{k+1}x),
\end{eqnarray*}
$k\in\mathbb{Z}$. Then both sequences $\{u_{k}(x)\}$ and $\{v_{k}(x)\}$
solve (\ref{eq:diff_eq}) on $\mathbb{Z}$ and $u_{-1}(x)=\Phi(x)$,
$v_{-1}(x)=\Psi(x)$. Consequently, if $\Phi(x)=0$ then components
of an eigenvector of $T(\infty)=T^{\text{F}}$ corresponding to the
eigenvalue $x$ can be chosen to be $u_{k}(x)$, $k\in\mathbb{Z}{}_{+}$.
Similarly, if $\kappa\Phi(x)+\Psi(x)=0$ for some $\kappa\in\mathbb{R}$
then components of an eigenvector of $T(\kappa)$ corresponding to
the eigenvalue $x$ can be chosen to be $\kappa u_{k}(x)+v_{k}(x)$,
$k\in\mathbb{Z}{}_{+}$.

\section{Some applications to the $q$-Bessel functions}

In this section we are going to only consider the Friedrichs extension
if $0<\nu<1$. To simplify the formulations below we will unify the
notation and use the same symbol $T^{\text{F}}$ for the corresponding
self-adjoint Jacobi operator for all values of $\nu>0$, this is to
say even in the case when $\nu\geq1$. Making use of the close relationship
between the spectral data for $T^{\text{F}}$ and the $q$-Bessel
functions, as asserted in Propositions~\ref{prop:specT_Phi} and
\ref{prop:char_fceT}, we are able to reproduce in an alternative
way some results from \cite{KoelinkSwarttouw,Koelink}.

\begin{proposition}[Koelink, Swarttouw] Assume that $\nu>0$. The
zeros of $z\mapsto J_{\nu}(z;q)$ are all real (arranged symmetrically
with respect to the origin), simple and form an infinite countable
set with no finite accumulation points. Let $0<w_{1}<w_{2}<w_{3}<\ldots$
be the positive zeros of $J_{\nu}(z;q)$. Then the sequences
\begin{equation}
u(n)=\big(J_{\nu}(q^{1/2}w_{n};q),q^{1/2}J_{\nu}(qw_{n};q),qJ_{\nu}(q^{3/2}w_{n};q),\ldots\big),\ n\in\mathbb{N},\label{eq:OG_basis}
\end{equation}
form an orthogonal basis in $\ell^{2}$. In particular, the orthogonality
relation
\begin{equation}
\sum_{k=0}^{\infty}q^{k}J_{\nu}(q^{(k+1)/2}w_{m};q)\, J_{\nu}(q^{(k+1)/2}w_{n};q)=-\frac{q^{-1+\nu/2}}{2w_{n}}\, J_{\nu}(q^{1/2}w_{n};q)\,\frac{\partial J_{\nu}(w_{n};q)}{\partial z}\,\delta_{m,n}\label{eq:qJ_ortho}
\end{equation}
holds for all $m,n\in\mathbb{N}$.

\end{proposition}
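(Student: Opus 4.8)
The plan is to translate the statement entirely into the spectral language developed in the paper for the operator $T^{\text{F}}$ and then read off the conclusions from earlier results. Recall from Proposition~\ref{prop:char_fceT} that, up to a nonvanishing prefactor, $\Phi(x)$ is a constant multiple of $x^{-\nu/2}J_{\nu}(q^{-1/2}\sqrt{x};q)$; hence substituting $z=q^{-1/2}\sqrt{x}$, equivalently $x=qz^{2}$, converts statements about zeros of $\Phi$ into statements about zeros of $z\mapsto J_{\nu}(z;q)$. Since $z^{\nu}{}_{1}\phi_{1}(0;q^{\nu+1};q,qz^{2})$ depends on $z$ only through $z^{2}$ (apart from the explicit factor $z^{\nu}$), the zero set of $J_{\nu}(\cdot;q)$ is symmetric about the origin, so it suffices to treat the positive zeros, and these correspond bijectively under $x=qz^{2}$ to the positive zeros of $\Phi$, i.e.\ (by Proposition~\ref{prop:specT_Phi}) to $\spec T^{\text{F}}$.

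First I would invoke Proposition~\ref{prop:T_pp_semibound} (together with Proposition~\ref{prop:G_eq_Tinv}): $T^{\text{F}}$ is positive definite, its spectrum is pure point, all eigenvalues are simple, positive, and accumulate only at $+\infty$. Via $x=qz^{2}$ this immediately gives that the positive zeros $w_{1}<w_{2}<\dots$ of $J_{\nu}(z;q)$ form an infinite countable set with no finite accumulation point, are simple (simplicity of the eigenvalue $x_{n}=qw_{n}^{2}$ forces $\Phi'(x_{n})\neq0$, and the chain rule plus $w_{n}>0$ transfers this to $\partial_{z}J_{\nu}(w_{n};q)\neq0$), and — by the symmetry noted above — that together with their negatives these exhaust all zeros, all of which are therefore real. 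Then I would use the fact that the eigenvectors of a self-adjoint operator with discrete simple spectrum form an orthogonal basis of $\ell^{2}$: by Proposition~\ref{prop:char_fceT}, the eigenvector of $T^{\text{F}}$ for the eigenvalue $x_{n}=qw_{n}^{2}$ has components $u_{k}(x_{n})=q^{k/2}J_{\nu}(q^{k/2}\sqrt{x_{n}};q)=q^{k/2}J_{\nu}(q^{(k+1)/2}w_{n};q)$, which is exactly (up to relabelling $k\mapsto k$) the sequence $u(n)$ in~\eqref{eq:OG_basis}. Hence $\{u(n)\}$ is an orthogonal basis of $\ell^{2}$, and $\langle u(m),u(n)\rangle$ vanishes for $m\neq n$, which is the off-diagonal part of~\eqref{eq:qJ_ortho}.

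The remaining work is to compute the norm $\|u(n)\|^{2}=\sum_{k=0}^{\infty}q^{k}J_{\nu}(q^{(k+1)/2}w_{n};q)^{2}$ in closed form and match it with the right-hand side of~\eqref{eq:qJ_ortho}. The natural tool is a Christoffel--Darboux / Wronskian-type identity for the difference equation~\eqref{eq:diff_eq}: for two spectral parameters $x,y$ one has a telescoping identity expressing $(x-y)\sum_{k=0}^{N}u_{k}(x)u_{k}(y)$ in terms of the boundary Wronskian $\alpha_{N}\bigl(u_{N+1}(x)u_{N}(y)-u_{N}(x)u_{N+1}(y)\bigr)$ (the $k=-1$ boundary term drops out because $u_{-1}(x)\propto\Phi(x)$ and $x$ is a zero of $\Phi$). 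Dividing by $x-y$, letting $y\to x=x_{n}$ to get a derivative, and sending $N\to\infty$ using the known decay $u_{k}(x)=O(q^{(1+\nu)k/2})$ from Proposition~\ref{prop:Phi}/\ref{prop:char_fceT} (so the Wronskian tail vanishes), yields $\|u(n)\|^{2}$ as a product of $u$ and $\partial_{x}u$ evaluated suitably; finally one rewrites $\partial_{x}$ in terms of $\partial_{z}$ via $x=qz^{2}$, i.e.\ $\partial_{x}=\tfrac{1}{2qz}\partial_{z}$ at $z=w_{n}$, and expresses everything through $J_{\nu}(q^{1/2}w_{n};q)$ and $\partial_{z}J_{\nu}(w_{n};q)$ to land on the stated constant $-q^{-1+\nu/2}/(2w_{n})$.

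The main obstacle I anticipate is precisely this last bookkeeping: getting the Christoffel--Darboux boundary term right, justifying the interchange of the $N\to\infty$ limit with the $y\to x$ differentiation (which is where the uniform decay estimate $|u_k(x)|\le C q^{(1+\nu)k/2}$ locally in $x$, available from the ${}_1\phi_1$ series, is essential), and then carrying the explicit prefactors from~\eqref{eq:def_q-Bessel}, \eqref{eq:egvector_TF} and the substitution $x=qz^{2}$ through without error so that the normalization constant comes out exactly as claimed. Everything conceptual — reality, simplicity, discreteness, completeness — is an immediate corollary of the operator-theoretic results already proved; only the evaluation of the squared norm requires genuine (though routine) computation.
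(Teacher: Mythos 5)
Your overall route is the same as the paper's: reality, countability, absence of finite accumulation points and completeness of the eigenvectors are read off from the spectral analysis of $T^{\text{F}}$ (Propositions~\ref{prop:G_eq_Tinv}, \ref{prop:T_pp_semibound}, \ref{prop:specT_Phi}, \ref{prop:char_fceT}), and the normalization in \eqref{eq:qJ_ortho} comes from a discrete Green/Wronskian identity for solutions of \eqref{eq:diff_eq} differentiated in the spectral parameter. The paper writes this identity directly in its confluent form (with $x$-derivatives), whereas you propose the two-point Christoffel--Darboux identity followed by the limit $y\to x$ and $N\to\infty$; these are the same computation, and your remarks on justifying the limits via the locally uniform decay $u_k(x)=O(q^{(1+\nu)k/2})$ match the paper's use of the asymptotics \eqref{eq:asympt_uk}.

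The one step that does not hold as you state it is the parenthetical argument for simplicity of the zeros. Simplicity of the eigenvalue $x_n=qw_n^{\,2}$ is automatic for any Jacobi operator (formal eigenvectors are unique up to a scalar) and is a statement about the eigenspace; it does not imply that $x_n$ is a simple zero of the entire function $\Phi$, so you cannot conclude $\Phi'(x_n)\neq0$ from it. The paper obtains $\partial J_{\nu}(w_n;q)/\partial z\neq0$ precisely from the norm identity you defer to the end: the left-hand side of \eqref{eq:qJ_ortho} with $m=n$ is strictly positive because $u_k(x)\neq0$ for all large $k$ (by \eqref{eq:asympt_uk}), hence the right-hand side is nonzero, which forces the $z$-derivative to be nonvanishing. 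So your plan closes once you carry out the Green's-formula computation and then read simplicity off of it, rather than invoking eigenvalue simplicity; with that reordering your argument coincides with the paper's proof.
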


\begin{remark*} It is not difficult to show that the proposition
remains valid also for $-1<\nu\leq0$. To this end, one can extend
the values $\nu>0$ to $\nu=0$ following the lines sketched in Subsection~\ref{subsec:nu_eq_0}
and employ Propositions~\ref{prop:specT_Phi} and \ref{prop:char_fceT}
while letting $\kappa=0$ in order to treat the values $-1<\nu<0$.
But we omit the details. An original proof of this proposition can
be found in \cite[Section~3]{KoelinkSwarttouw}. \end{remark*}

\begin{proof} All claims, except the simplicity of zeros and the
normalization of eigenvectors, follow from the known spectral properties
of $T^{\text{F}}$. Namely, $T^{\text{F}}$ is positive definite,
$(T^{\text{F}})^{-1}$ is compact, $\spec T^{\text{F}}=\{qw_{n}^{\,2};\, n\in\mathbb{N}\}$
and corresponding eigenvectors are given by formula (\ref{eq:egvector_TF});
cf. Propositions~\ref{prop:G_eq_Tinv}, \ref{prop:T_pp_semibound},
\ref{prop:specT_Phi} and \ref{prop:char_fceT}.

The remaining properties can be derived, in an entirely standard way,
with the aid of discrete Green's formula. Suppose a sequence of differentiable
functions $u_{n}(x)$, $n\in\mathbb{Z}$, obeys the difference equation
(\ref{eq:diff_eq}). Then Green's formula implies that, for all $m,n\in\mathbb{Z}$,
$m\leq n$,
\[
\sum_{k=m}^{n}u_{k}(x)^{2}=\alpha_{m-1}\big(u_{m-1}'(x)u_{m}(x)-u_{m-1}(x)u_{m}'(x)\big)-\alpha_{n}\big(u_{n}'(x)u_{n+1}(x)-u_{n}(x)u_{n+1}'(x)\big)
\]
(with the dash standing for a derivative). We choose $m=0$ and $u_{k}(x)$
as defined in (\ref{eq:egvector_TF}). From definition (\ref{eq:def_q-Bessel})
one immediately infers the asymptotic behavior
\begin{equation}
u_{k}(x)=C(x)(1+O(q^{k}))\, q^{(\nu+1)k/2},\ u_{k}'(x)=C'(x)(1+O(q^{k}))\, q^{(\nu+1)k/2},\ \text{as}\ k\to\infty,\label{eq:asympt_uk}
\end{equation}
where $C(x)=x^{\nu/2}\,(q^{1+\nu};q)_{\infty}/(q;q)_{\infty}$. It
follows that one can send $n\to\infty$ in Green's formula. For $x=qw_{n}^{\,2}$
we have $u_{-1}(x)=0$ and the formula reduces to the equality
\[
\sum_{k=0}^{\infty}q^{k}J_{\nu}(q^{(k+1)/2}w_{n};q)^{2}=-q^{\nu/2}\, J_{\nu}(q^{1/2}w_{n};q)\,\frac{\partial J_{\nu}(q^{-1/2}\sqrt{x};q)}{\partial x}\bigg|_{x=qw_{n}^{\,2}}\,.
\]
Whence (\ref{eq:qJ_ortho}). From the asymptotic behavior (\ref{eq:asympt_uk})
it is also obvious that $u_{k}(x)\neq0$ for sufficiently large $k$.
Necessarily, $\partial J_{\nu}(w_{n};q)/\partial z\neq0$. \end{proof}

In addition, one obtains at once an orthogonality relation for the
sequence of orthogonal polynomials $\{\hat{P}_{n}(x)\}$. As is well
known from the general theory \cite{Akhiezer} and Proposition~\ref{prop:sa_extns_Tmin},
the orthogonality relation is unique if $\nu\geq1$ and indeterminate
if $0<\nu<1$. It was originally derived in \cite[Theorem~3.6]{Koelink}.

\begin{proposition}[Koelink] Assume that $\nu>0$ and let $\{\hat{P}_{n}(x)\}$
be the sequence of orthogonal polynomials defined in (\ref{eq:Pn_def}),
(\ref{eq:Phat_def}), and $0<w_{1}<w_{2}<w_{3}<\ldots$ be the positive
zeros of $z\mapsto J_{\nu}(z;q)$. Then the orthogonality relation
\begin{equation}
-2q^{1-\nu/2}\,\sum_{k=1}^{\infty}\frac{w_{k}J_{\nu}(q^{1/2}w_{k};q)}{\partial J_{\nu}(w_{k};q)/\partial z}\,\hat{P}_{m}(qw_{k}^{\,2})\hat{P}_{n}(qw_{k}^{\,2})=\delta_{m,n}\label{eq:ogrel_polys}
\end{equation}
holds for all $m,n\in\mathbb{Z}_{+}$. \end{proposition}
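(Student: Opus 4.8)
The plan is to derive the orthogonality relation (\ref{eq:ogrel_polys}) as a direct consequence of the spectral decomposition of the Friedrichs extension $T^{\text{F}}$ together with the eigenvector data already assembled in Propositions~\ref{prop:G_eq_Tinv}, \ref{prop:T_pp_semibound}, \ref{prop:specT_Phi} and \ref{prop:char_fceT}. Recall from Proposition~\ref{prop:char_fceT} that the spectrum of $T^{\text{F}}$ is $\{qw_{k}^{\,2};\,k\in\mathbb{N}\}$, each eigenvalue $x_{k}=qw_{k}^{\,2}$ being simple, and that the corresponding eigenvector is the formal eigenvector $\hat{P}(x_{k})=\{\hat{P}_{n}(x_{k})\}_{n\geq0}$ (since $\Phi(x_{k})=0$ makes $\hat{P}(x_{k})$ square summable, and its first component is $\hat{P}_{0}=1\neq0$, so it indeed spans the eigenspace). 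Since $(T^{\text{F}})^{-1}$ is compact and $T^{\text{F}}$ is self-adjoint, the normalized eigenvectors form an orthonormal basis of $\ell^{2}$, which yields both the orthogonality $\langle \hat{P}(x_{k}),\hat{P}(x_{l})\rangle=0$ for $k\neq l$ and the completeness/resolution of identity $\sum_{k}\|\hat{P}(x_{k})\|^{-2}\,\hat{P}_{m}(x_{k})\hat{P}_{n}(x_{k})=\delta_{m,n}$. Comparing the latter with (\ref{eq:ogrel_polys}), the entire task reduces to computing the normalization constant $\|\hat{P}(x_{k})\|^{2}=\sum_{n=0}^{\infty}\hat{P}_{n}(x_{k})^{2}$ and checking it equals $-\big(2q^{1-\nu/2}w_{k}J_{\nu}(q^{1/2}w_{k};q)\big/(\partial J_{\nu}(w_{k};q)/\partial z)\big)^{-1}$.

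The key steps, in order, are: (i) invoke completeness of the eigenbasis to write $\delta_{m,n}=\sum_{k}\|\hat P(x_k)\|^{-2}\hat P_m(x_k)\hat P_n(x_k)$; (ii) relate $\hat{P}_{n}(x_{k})$ to the $q$-Bessel eigenvector $u_{n}(x_{k})$ from (\ref{eq:egvector_TF}). Both $\{\hat{P}_{n}(x)\}$ and $\{u_{n}(x)\}$ solve the same second-order recurrence (\ref{eq:diff_eq}) with the same vanishing ``$n=-1$'' boundary behavior when $\Phi(x)=0$, so $u_{n}(x_{k})=u_{0}(x_{k})\,\hat{P}_{n}(x_{k})$ with $u_{0}(x_{k})=q^{0}J_{\nu}(\sqrt{x_{k}};q)=J_{\nu}(q^{1/2}w_{k};q)$ (using $\sqrt{x_k}=q^{1/2}w_k$); (iii) therefore $\|\hat{P}(x_{k})\|^{2}=J_{\nu}(q^{1/2}w_{k};q)^{-2}\sum_{n=0}^{\infty}u_{n}(x_{k})^{2}$, and the sum $\sum_{n}u_{n}(x_{k})^{2}$ is exactly the quantity computed via discrete Green's formula in the proof of the previous proposition, namely $\sum_{k=0}^{\infty}q^{k}J_{\nu}(q^{(k+1)/2}w_{n};q)^{2}=-q^{\nu/2}J_{\nu}(q^{1/2}w_{n};q)\,\partial_{x}J_{\nu}(q^{-1/2}\sqrt{x};q)\big|_{x=qw_{n}^{2}}$; (iv) convert the $x$-derivative into a $z$-derivative via $z=q^{-1/2}\sqrt{x}$, which gives $\partial_{x}=\big(2q^{-1/2}\sqrt{x}\big)^{-1}q^{-1/2}\partial_{z}=\big(2q^{1/2}w_{n}\cdot q^{1/2}\big)^{-1}\partial_{z}$ at $x=qw_{n}^{2}$, hence $\partial_{x}J_{\nu}(q^{-1/2}\sqrt{x};q)\big|_{x=qw_n^2}=\frac{1}{2qw_{n}}\partial_{z}J_{\nu}(w_{n};q)$; (v) assemble: $\|\hat{P}(x_{k})\|^{2}=-\dfrac{q^{\nu/2}}{2qw_{k}\,J_{\nu}(q^{1/2}w_{k};q)}\,\partial_{z}J_{\nu}(w_{k};q)=-\dfrac{q^{\nu/2-1}}{2w_{k}}\cdot\dfrac{\partial_{z}J_{\nu}(w_{k};q)}{J_{\nu}(q^{1/2}w_{k};q)}$, so $\|\hat{P}(x_{k})\|^{-2}=-2q^{1-\nu/2}\,w_{k}\,J_{\nu}(q^{1/2}w_{k};q)\big/\partial_{z}J_{\nu}(w_{k};q)$, which is precisely the coefficient in (\ref{eq:ogrel_polys}).

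I do not expect any genuine obstacle here; the proposition is essentially a bookkeeping consequence of facts already established. The only point requiring mild care is the chain-rule bookkeeping in step (iv) — keeping straight the two substitutions $x\mapsto\sqrt{x}\mapsto q^{-1/2}\sqrt{x}$ and evaluating all the $q$-powers at $x=qw_{k}^{2}$ correctly — and making sure the proportionality in step (ii) is justified (it is: both sequences solve a second-order linear recurrence, the solution space over $\mathbb{Z}$ restricted by the single condition at index $-1$ is one-dimensional, and $J_{\nu}(q^{1/2}w_k;q)\neq 0$ because otherwise the whole eigenvector would vanish, contradicting $\hat P_0=1$). One should also note that $x_{k}=qw_{k}^{2}>0$ and the eigenvalues have no finite accumulation point (Proposition~\ref{prop:T_pp_semibound}), which guarantees the sum over $k$ in (\ref{eq:ogrel_polys}) is the genuine spectral sum with no eigenvalue missed; for $0<\nu<1$ this is the orthogonality measure attached specifically to the Friedrichs extension, consistent with the remark that the moment problem is then indeterminate.
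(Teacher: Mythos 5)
Your argument is correct and essentially identical to the paper's proof: both rest on Parseval's identity for the orthogonal eigenbasis of $T^{\text{F}}$ established in the preceding proposition, the proportionality $u_{n}(qw_{k}^{\,2})=J_{\nu}(q^{1/2}w_{k};q)\,\hat{P}_{n}(qw_{k}^{\,2})$ coming from simplicity of the eigenvalues, and the norm $\|u(k)\|^{2}$ already computed via discrete Green's formula in (\ref{eq:qJ_ortho}). The only blemish is a harmless typo in your intermediate chain-rule expression in step (iv); the evaluated derivative $\partial_{x}J_{\nu}(q^{-1/2}\sqrt{x};q)\big|_{x=qw_{k}^{2}}=\tfrac{1}{2qw_{k}}\,\partial_{z}J_{\nu}(w_{k};q)$ and the resulting normalization constant are correct.
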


\begin{proof} Let $u(k)$, $k\in\mathbb{N}$, be the orthogonal basis
in $\ell^{2}$ introduced in (\ref{eq:OG_basis}), i.e. we put
\[
u(k)_{n}=q^{n/2}J_{\nu}(q^{(n+1)/2}w_{k};q),\ k\in\mathbb{N},\, n\in\mathbb{Z}_{+}.
\]
Notice that the norm $\|u(k)\|$ is known from (\ref{eq:qJ_ortho}).
The vectors $u(k)$ and $\hat{P}(x)=(\hat{P}_{0}(x),\hat{P}_{1}(x),\hat{P}_{2}(x),\ldots)$,
with $x=qw_{k}^{\,2}$, are both eigenvectors of $T^{\text{F}}$ corresponding
to the same eigenvalue. Hence these vectors are linearly dependent
and one has
\[
q^{n/2}J_{\nu}(q^{(n+1)/2}w_{k};q)=J_{\nu}(q^{1/2}w_{k};q)\hat{P}_{n}(qw_{k}^{\,2}),\ k\in\mathbb{N},\, n\in\mathbb{Z}_{+}.
\]
One concludes that Parseval's identity
\[
\sum_{k=1}^{\infty}\frac{u(k)_{m}u(k)_{n}}{\|u(k)\|^{2}}=\delta_{m,n},\ m,n\in\mathbb{Z}_{+},
\]
yields (\ref{eq:ogrel_polys}). \end{proof}

\begin{remark} To complete the picture let us mention two more results
which are known about the Hahn-Exton $q$-Bessel functions and the
associated polynomials. First, denote again by $w_{n}^{(\nu)}\equiv w_{n}$,
$n\in\mathbb{N}$, the increasingly ordered positive zeros of $J_{\nu}(z;q)$.
In \cite{AbreuBustozCardoso} it is proved that if $q$ is sufficiently
small, more precisely, if $q^{\nu+1}<(1-q)^{2}$ then
\[
q^{-m/2}>w_{m}>q^{-m/2}\!\left(1-\frac{q^{m+\nu}}{1-q^{m}}\right)\!,\text{ }\forall m\in\mathbb{N}.
\]
More generally, in Theorem~2.2 and Remark~2.3 in \cite{AnnabyMansour}
it is shown that for any $q$, $0<q<1$, one has
\[
w_{m}=q^{-m/2}\left(1+O(q^{m})\right)\text{ }\text{as}\text{ }m\to\infty.
\]

Second, in \cite{KoelinkSwarttouw,Koelink} one can find an explicit
expression for the sequence of orthogonal polynomials $\{\hat{P}_{n}(x)\}$,
namely
\[
\hat{P}_{n}(x)=q^{n/2}\,\sum_{j=0}^{n}\frac{q^{n(j-\nu/2)}(q^{-n};q)_{j}}{(q;q)_{j}}\,\,_{2}\phi_{1}(q^{j-n},q^{j+1};q^{-n};q,q^{-j+\nu})\, x^{j},\ n\in\mathbb{Z}_{+}.
\]
Let us remark that a relative formula in terms of the Al-Salam--Chihara
polynomials has been derived in \cite[Theorem~2]{VanAssche}. \end{remark}

\section*{Acknowledgments}

The authors wish to acknowledge gratefully partial support from grant
No. GA13-11058S of the Czech Science Foundation.

\end{document}